\newtheoremstyle{mystyle}
  {}
  {}
  {\normalfont}
  { }
  {\bfseries}
  {}
  {10pt}
  { }
\theoremstyle{mystyle}
\newtheorem{thm}{Theorem}[section]
\newtheorem{prop}[thm]{Proposition}
\newtheorem{lem}{Lemma}[section]
\newtheorem{rmk}{Remark}
\renewcommand{\theequation}{%
\thesection.\arabic{equation}}
\newcommand{\dd}{\mathrm d}
\newcommand{\ee}{\mathrm e}
\newcommand{\EE}{\mathbb E}
\newcommand{\VV}{\mathrm{Var}}
\newcommand{\GG}{\mathscr F_{t_{i-1}}}
\newcommand{\pto}{\stackrel{p}{\longrightarrow}}
\newcommand{\dto}{\stackrel{d}{\longrightarrow}}
\newcommand{\TT}{\mathsf T}
\newcommand{\bs}{\boldsymbol}
\begin{document}
\bibliographystyle{plain}
\title
[Parameter estimation for
a linear parabolic SPDE with a small noise ] 
{
Parameter estimation for a linear parabolic SPDE model
\\
in two space dimensions with a small noise
}
\date{}
\author{Yozo Tonaki$^1$}
\author{Yusuke Kaino$^2$}
\author{Masayuki Uchida$^{1,3}$}
\address{
$^1$Graduate School of Engineering Science, Osaka University
}
\address{
$^2$Graduate School of Maritime Sciences, Kobe University
}
\address{
$^3$Center for Mathematical Modeling and Date Science (MMDS), Osaka University
and JST CREST
}


\keywords{
Adaptive estimation,
high frequency data,
small noise,
stochastic partial differential equations in two space dimensions, 
$Q$-Wiener process
}

\maketitle

\begin{abstract}
We study parameter estimation for a linear parabolic second-order 
stochastic partial differential equation (SPDE) 
in two space dimensions with a small dispersion parameter
using high frequency data with respect to time and space.
We set two types of $Q$-Wiener processes as a driving noise.
We provide minimum contrast estimators of the coefficient parameters of the SPDE 
appearing in the coordinate process of the SPDE based on the thinned data in space, 
and approximate the coordinate process based on the thinned data in time.
Moreover, we propose an estimator of the drift parameter 
using the fact that the coordinate process is 
the Ornstein-Uhlenbeck process and statistical inference for diffusion processes
with a small noise.
\end{abstract}

\section{Introduction}\label{sec1}
We deal with the following linear parabolic 
stochastic partial differential equation (SPDE)
in two space dimensions
\begin{align}
\dd X_t^{Q}(y,z)
&=\biggl\{
\theta_2
\biggl(
\frac{\partial^2}{\partial y^2}+\frac{\partial^2}{\partial z^2}
\biggr)
+\theta_1\frac{\partial}{\partial y}
+\eta_1\frac{\partial}{\partial z}
+\theta_0
\biggr\}
X_t^{Q}(y,z)
\dd t
\nonumber
\\
&\qquad+\epsilon\dd W_t^Q(y,z),
\quad
(t,y,z)\in[0,1]\times D,
\label{small_2d_spde}
\\
X_0^{Q}(y,z)
&=\xi(y,z),
\quad (y,z)\in D,
\nonumber
\\
X_t^{Q}(y,z)&=0,
\quad (t,y,z)\in [0,1]\times \partial D,
\nonumber
\end{align}
where $D=[0,1]^2$, 
$\epsilon\in(0,1]$ is a known small dispersion parameter,
$W_t^Q$ is a $Q$-Wiener process in a Sobolev space on $D$,
an initial value $\xi$ is independent of $W_t^Q$,
$\theta=(\theta_0,\theta_1,\eta_1,\theta_2)$ is an unknown parameter
and $(\theta_0,\theta_1,\eta_1,\theta_2)\in\mathbb R^3\times(0,\infty)$.
Moreover, the parameter space $\Theta$ is a compact convex subset of 
$\mathbb R^3\times(0,\infty)$, 
$\theta^*=(\theta_0^*,\theta_1^*,\eta_1^*,\theta_2^*)$ is the true value of $\theta$ 
and we assume that $\theta^*\in \mathrm{Int}\,\Theta$.
The data are discrete observations 
$\{X_{t_i}^{Q}(y_{j_1},z_{j_2})\}$, 
$i=0,\ldots,N$, $j_1=0,\ldots,M_1$, $j_2=0,\ldots,M_2$, $M=M_1M_2$
with $t_i=i/N$, $y_{j_1}=j_1/M_1$ and $z_{j_2}=j_2/M_2$. 

SPDEs have been applied in various fields such as physics, engineering, and economics.
For instance, 
a stochastic heat equation is a family of our model, a linear parabolic SPDE,
and is a basic and important model that appears in many situations.
For application of linear parabolic SPDEs, 
see Piterbarg and Ostrovskii \cite{Piterbarg_Ostrovskii1997}, 
which dealt with sea surface temperature variability.

Statistical inference for SPDE models has been developed by many researchers.
For an overview of existing theories, see 
Lototsky \cite{Lototsky2009} and Cialenco \cite{Cialenco2018}.
As for discrete observations, 
see Markussen \cite{Markussen2003},
Bibinger and Trabs \cite{Bibinger_Trabs2020},
Chong  \cite{Chong2020}, \cite{Chong2019arXiv},
Cialenco et al. \cite{Cialenco_etal2020},
Cialenco and Huang \cite{Cialenco_Huang2020},
Hildebrandt \cite{Hildebrandt2020}, 
Kaino and Uchida \cite{Kaino_Uchida2020}, \cite{Kaino_Uchida2021},
Hildebrandt and Trabs 
\cite{Hildebrandt_Trabs2021}, \cite{Hildebrandt_Trabs2021arXiv},
Tonaki et al. \cite{TKU2022arXiv} 
and references therein.
Recently, Kaino and Uchida \cite{Kaino_Uchida2021} 
considered the following linear parabolic SPDE model in one space dimension 
\begin{align}
\dd X_t(y)
&=\biggl(
\theta_2\frac{\partial^2}{\partial y^2}
+\theta_1\frac{\partial}{\partial y}
+\theta_0
\biggr)
X_t(y)
\dd t
+\epsilon\dd B_t(y),
\quad
(t,y)\in[0,T]\times[0,1],
\label{small_1d_spde}
\\
X_0(y)
&=\xi(y),
\quad y\in [0,1],
\qquad
X_t(0)=X_t(1)=0,
\quad t\in [0,T],
\nonumber
\end{align}
where $\epsilon\in(0,1]$ is a known small dispersion parameter,
$T>0$, $B_t$ is a cylindrical Brownian motion in a Sobolev space on $[0,1]$,
$\xi$ is an initial value 
and $\theta_0,\theta_1,\theta_2$ are unknown parameters.
They proposed the adaptive maximum likelihood type estimation for 
the coefficient parameters $\theta_0$, $\theta_1$ and $\theta_2$,
and then showed that 
the estimators of $\theta_0$, $\theta_1$ and $\theta_2$ are asymptotically normal.
Tonaki et al. \cite{TKU2022arXiv} studied the following linear parabolic SPDE
in two space dimensions
\begin{align}
\dd \bar{X}_t^Q(y,z)
&=\biggl\{
\theta_2
\biggl(
\frac{\partial^2}{\partial y^2}+\frac{\partial^2}{\partial z^2}
\biggr)
+\theta_1\frac{\partial}{\partial y}
+\eta_1\frac{\partial}{\partial z}
+\theta_0
\biggr\}
\bar{X}_t^Q(y,z)
\dd t
\nonumber
\\
&\qquad+\sigma\dd W_t^Q(y,z),
\quad
(t,y,z)\in[0,1]\times D,
\label{2d_spde}
\\
\bar{X}_0^Q(y,z)
&=\xi(y,z),
\quad (y,z)\in D,
\nonumber
\\
\bar{X}_t^Q(y,z)&=0,
\quad (t,y,z)\in [0,1]\times \partial D,
\nonumber
\end{align}
where $D=[0,1]^2$, 
$W_t^Q$ is a $Q$-Wiener process in a Sobolev space on $D$,
$\xi$ is an initial value, 
$(\theta_0,\theta_1,\eta_1,\theta_2)$ and $\sigma$ are unknown parameters
and $(\theta_0,\theta_1,\eta_1,\theta_2,\sigma)\in\mathbb R^3\times(0,\infty)^2$.
Since a mild solution $\bar{X}_t^I$ of the SPDE \eqref{2d_spde} 
driven by a cylindrical Brownian motion $B_t=W_t^I$ 
($I$ is the identity operator) 
is not square integrable for a.e. $(t,y,z)\in[0,1]\times D$ 
(see Remark 1 in \cite{TKU2022arXiv}), 
they considered two types of $Q$-Wiener processes given 
by \eqref{QWp_ver1} and \eqref{QWp_ver2} below.
They showed consistency and asymptotic normality 
for the estimators of $(\theta_0,\theta_1,\eta_1,\theta_2,\sigma^2)$ 
when the driving noise is a $Q_1$-Wiener process given by \eqref{QWp_ver1}, 
and for the estimators of $(\theta_1,\eta_1,\theta_2,\sigma^2)$ 
when the driving noise is a $Q_2$-Wiener process defined by \eqref{QWp_ver2},
respectively.
For parameter estimation of SPDEs driven by a $Q$-Wiener process,
see H\"{u}bner et al. \cite{Hubner_etal1993}
and Cialenco and Glatt-Holtz \cite{Cialenco_Glatt-Holtz2011}.
Refer to 
Lord et al. \cite{Lord_etal2014},
Da Prato and Zabczyk \cite{DaPrato_Zabczyk2014}
and
Lototsky and Rozovsky \cite{Lototsky_Rozovsky2017}
for the $Q$-Wiener process and the mild solution of SPDEs.

In this paper, 
we apply the estimation method for the coefficient parameters in
the SPDE \eqref{small_1d_spde} proposed by Kaino and Uchida \cite{Kaino_Uchida2021}
to the SPDE \eqref{small_2d_spde} driven by a $Q$-Wiener process based on 
Tonaki et al. \cite{TKU2022arXiv}.
In other words, we consider
adaptive estimation of
the SPDE \eqref{small_2d_spde} with a small noise 
driven by two types of $Q$-Wiener processes 
defined by \eqref{QWp_ver1} and \eqref{QWp_ver2}. 
For adaptive estimation of stochastic differential equations, 
see Yoshida  \cite{Yoshida1992} and Uchuda and Yoshida \cite{Uchida_Yoshida2012, Uchida_Yoshida2014}.
Since the coordinate process of the SPDE \eqref{small_2d_spde} 
is a diffusion process, 
we derive an estimator of  $\theta_0$
based on statistical inference for diffusion processes with a small noise
in an analogous manner to Kaino and Uchida \cite{Kaino_Uchida2021}.
For statistical inference for diffusion processes 
with a small noise based on discrete observations,
see 
Genon-Catalot \cite{Genon-Catalot1990},
Laredo \cite{Laredo1990},
S{\o}rensen and Uchida \cite{Sorensen_Uchida2003},
Uchida \cite{Uchida2003}, \cite{Uchida2004},
Gloter and S{\o}rensen \cite{Gloter_Sorensen2009},
Guy et al. \cite{Guy_etal2014},
Nomura and Uchida \cite{Nomura_Uchida2016},
Kaino and Uchida \cite{Kaino_Uchida2018}
and Kawai and Uchida \cite{Kawai_Uchida2022}.

This paper is organized as follows.
In Section \ref{sec2}, we give the setting of our model. 
In Section \ref{sec3}, 
we propose estimators of the coefficient parameters 
$\theta_1$, $\eta_1$, $\theta_2$ and $\theta_0$ in the SPDE \eqref{small_2d_spde}
driven by two types of $Q$-Wiener processes 
and show the asymptotic properties of these estimators.
Section \ref{sec5} is devoted to the proofs of the results 
in Section \ref{sec3}.
Finally, we treat parameter estimation based on the exact likelihood
of the one dimensional Ornstein-Uhlenbeck process with a small noise 
appearing as the coordinate process of the SPDE \eqref{small_2d_spde} in Appendix I.
In order to illustrate the properties  of the parameters 
in the SPDE \eqref{small_2d_spde},
we show the sample paths with different values of the 
parameters in Appendix II.

\section{Preliminaries}\label{sec2}
Let $(\Omega,\mathscr F, \{{\mathscr F}_t\}_{t\ge0}, P)$ 
be a stochastic basis with usual conditions,
and let $\{w_{k,\ell}\}_{k,\ell\in\mathbb N}$
be independent real valued standard Brownian motions on this basis.

By setting the differential operator $A_\theta$ by 
\begin{equation*}
-A_\theta
=\theta_2
\biggl(
\frac{\partial^2}{\partial y^2}+\frac{\partial^2}{\partial z^2}
\biggr)
+\theta_1\frac{\partial}{\partial y}
+\eta_1\frac{\partial}{\partial z}
+\theta_0,
\end{equation*}
the SPDE \eqref{small_2d_spde} is expressed as
\begin{equation*}
\dd X_t^{Q}(y,z)=-A_\theta X_t^{Q}(y,z)\dd t+\epsilon\dd W_t^Q(y,z),
\end{equation*}
and it follows that 
$A_\theta e_{k,\ell}=\lambda_{k,\ell}e_{k,\ell}$ for $k,\ell\in\mathbb N$,
where the eigenfunctions $e_{k,\ell}$ of $A_\theta$ and 
the corresponding eigenvalues $\lambda_{k,\ell}$ are given by 
\begin{align*}
e_{k,\ell}(y,z)
&=2\sin(\pi k y)\sin(\pi \ell z)
\ee^{-\frac{\theta_1}{2\theta_2}y}\ee^{-\frac{\eta_1}{2\theta_2}z},
\quad (y,z)\in D,
\\
\lambda_{k,\ell}
&=-\theta_0+\frac{\theta_1^2+\eta_1^2}{4\theta_2}+\pi^2(k^2+\ell^2)\theta_2.
\end{align*}
We set
$H_\theta
=\{f:D\to\mathbb R|\, \|f\|_\theta<\infty \text{ and } 
f(y,z)=0\text{ for } (y,z)\in \partial D  \}$
with 
\begin{equation*}
\langle f,g\rangle_\theta
=\int_0^1\int_0^1
f(y,z)g(y,z)
\ee^{\frac{\theta_1}{\theta_2}y}\ee^{\frac{\eta_1}{\theta_2}z} 
\dd y\dd z,
\quad
\|f\|_\theta=\langle f,f\rangle_\theta^{1/2}.
\end{equation*}

We introduce two types of $Q$-Wiener processes defined as follows.
\begin{align}
\langle W_t^{Q_1},f\rangle_\theta
&=\sum_{k,\ell\ge1}\lambda_{k,\ell}^{-\alpha/2}
\langle f,e_{k,\ell}\rangle_\theta w_{k,\ell}(t), 
\label{QWp_ver1}
\\
\langle W_t^{Q_2},f\rangle_\theta
&=\sum_{k,\ell\ge1}\mu_{k,\ell}^{-\alpha/2}
\langle f,e_{k,\ell}\rangle_\theta w_{k,\ell}(t)
\label{QWp_ver2}
\end{align}
for $f\in H_\theta$ and $t\ge0$, 
where 
$\mu_{k,\ell}=\pi^2(k^2+\ell^2)+\mu_0$, $\mu_0\in(-2\pi^2,\infty)$
and $\alpha\in(0,1)$.
$\mu_0$ is an unknown parameter (may be known),
the parameter space of $\mu_0$ is a compact convex subset of 
$(-2\pi^2,\infty)$ and the true value $\mu_0^*$ belongs to its interior.
$\alpha$ is known and its restriction guarantees 
that $Q_1$ and $Q_2$-Wiener processes are well-defined in a Hilbert space
and that the parameters are estimable,
see Remarks 1 and 4 in Tonaki et al. \cite{TKU2022arXiv}. 
Note that the $Q_1$-Wiener process defined by \eqref{QWp_ver1} is introduced 
as a driving noise with the damping factor 
based on the eigenvalue $\lambda_{k,\ell}$ of $A_\theta$ 
corresponding to $e_{k,\ell}$, and the $Q_2$-Wiener process is constructed 
as a driving noise with the damping factor 
which does not include the parameter $\theta$ based on the $Q_1$-Wiener process.
Specifically, by choosing $Q_1$ as the covariance operator defined on
the domain $\mathscr D(A_{\theta}^{-1/2}) \supset H_\theta$ 
with inner product 
\begin{equation*}
\langle u,v \rangle_{\theta,-1/2} 
= \langle A_\theta^{-1/2} u, A_\theta^{-1/2} v \rangle_{\theta}
\end{equation*}
and its corresponding induced norm $\|u\|_{\theta,-1/2}=\|A_\theta^{-1/2}u\|_\theta$ 
such that
\begin{equation*}
Q_1 u = 
\sum_{k,\ell \ge 1} \lambda_{k,\ell}^{-(1+\alpha)} 
\langle u, v_{k,\ell} \rangle_{\theta,-1/2} v_{k,\ell}
\end{equation*}
for $u = \sum_{k,\ell\ge1} \langle u, v_{k,\ell} \rangle_{\theta,-1/2} v_{k,\ell} 
\in \mathscr D(A_{\theta}^{-1/2})$,
$v_{k,\ell}=e_{k,\ell}/\|e_{k,\ell}\|_{\theta,-1/2}$ and $\alpha>0$,
\eqref{QWp_ver1} is obtained.
The same is true for \eqref{QWp_ver2}.
See Remarks 1 and 2 in Tonaki et al. \cite{TKU2022arXiv}.

We assume that $\xi\in H_\theta$ and 
$\lambda_{1,1}^*=-\theta_0^*
+\frac{(\theta_1^*)^2+(\eta_1^*)^2}{4\theta_2^*}+2\pi^2\theta_2^*>0$.
$X_t^{Q}$ is called a mild solution of \eqref{small_2d_spde} on $D$ 
if it satisfies that for any $t\in[0,1]$, 
\begin{equation*}
X_t^{Q}=
\ee^{-tA_\theta}\xi+\epsilon\int_0^t\ee^{-(t-s)A_\theta}\dd W_s^Q
\quad \mathrm{a.s.},
\end{equation*}
where
$\ee^{-tA_\theta}u=\sum_{k,\ell\ge1}\ee^{-\lambda_{k,\ell}t}
\langle u,e_{k,\ell}\rangle_\theta e_{k,\ell}$ for $u\in H_\theta$.
By defining the $Q_1$-Wiener process $W_t^{Q_1}$ in \eqref{QWp_ver1}, 
the random field $X_t^{Q_1}(y,z)$ is spectrally decomposed as
\begin{equation*}
X_t^{Q_1}(y,z)
=\sum_{k,\ell\ge1}x_{k,\ell}^{Q_1}(t)e_{k,\ell}(y,z),
\end{equation*}
where the coordinate process 
\begin{equation}\label{small_cp1_ver1}
x_{k,\ell}^{Q_1}(t)
=\langle X_t^{Q_1},e_{k,\ell}\rangle_\theta
=\ee^{-\lambda_{k,\ell}t}\langle \xi,e_{k,\ell}\rangle_\theta
+\epsilon\int_0^t\lambda_{k,\ell}^{-\alpha/2}
\ee^{-\lambda_{k,\ell}(t-s)}\dd w_{k,\ell}(s)
\end{equation}
is the Ornstein-Uhlenbeck process 
which satisfies the stochastic differential equation
with small dispersion parameter
\begin{equation}\label{small_dp_ver1}
\dd x_{k,\ell}^{Q_1}(t)
=-\lambda_{k,\ell} x_{k,\ell}^{Q_1}(t)\dd t
+\epsilon\lambda_{k,\ell}^{-\alpha/2}\dd w_{k,\ell}(t),
\quad
x_{k,\ell}^{Q_1}(0)=\langle \xi,e_{k,\ell}\rangle_\theta
\end{equation}
and can be expressed by using the random field $X_t^{Q_1}(y,z)$ as
\begin{equation}\label{small_cp2_ver1}
x_{k,\ell}^{Q_1}(t)
=
2\int_0^1\int_0^1
X_t^{Q_1}(y,z)\sin(\pi k y)\sin(\pi \ell z)
\ee^{\frac{\theta_1}{2\theta_2}y}\ee^{\frac{\eta_1}{2\theta_2}z}
\dd y\dd z.
\end{equation}
Similarly, by setting the $Q_2$-Wiener process $W_t^{Q_2}$ in \eqref{QWp_ver2}, 
the random field $X_t^{Q_2}(y,z)$ is represented as
\begin{equation*}
X_t^{Q_2}(y,z)
=\sum_{k,\ell\ge1}x_{k,\ell}^{Q_2}(t)e_{k,\ell}(y,z),
\end{equation*}
where the coordinate process 
\begin{equation}\label{small_cp1_ver2}
x_{k,\ell}^{Q_2}(t)
=\langle X_t^{Q_2},e_{k,\ell}\rangle_\theta
=\ee^{-\lambda_{k,\ell}t}\langle \xi,e_{k,\ell}\rangle_\theta
+\epsilon\int_0^t\mu_{k,\ell}^{-\alpha/2}
\ee^{-\lambda_{k,\ell}(t-s)}\dd w_{k,\ell}(s)
\end{equation}
is a diffusion process defined by the stochastic differential equation 
\begin{equation}\label{small_dp_ver2}
\dd x_{k,\ell}^{Q_2}(t)
=-\lambda_{k,\ell} x_{k,\ell}^{Q_2}(t)\dd t
+\epsilon\mu_{k,\ell}^{-\alpha/2}\dd w_{k,\ell}(t),
\quad
x_{k,\ell}^{Q_2}(0)=\langle \xi,e_{k,\ell}\rangle_\theta,
\end{equation}
and is also given by
\begin{equation}\label{small_cp2_ver2}
x_{k,\ell}^{Q_2}(t)
=2\int_0^1\int_0^1
X_t^{Q_2}(y,z)\sin(\pi k y)\sin(\pi \ell z)
\ee^{\frac{\theta_1}{2\theta_2}y}\ee^{\frac{\eta_1}{2\theta_2}z}
\dd y\dd z.
\end{equation}

We assume the following condition of the initial value $\xi\in H_\theta$. 
\begin{enumerate}
\item[\textbf{[A1]}]
The initial value $\xi$ is non-random, 
$\langle \xi,e_{1,1}\rangle_\theta\neq0$ and
$\|A_\theta\xi\|_\theta^2<\infty$. 
\end{enumerate}
Note that if [A1] is satisfied, 
then Assumption 1 in Tonaki et al. \cite{TKU2022arXiv} holds. 
By setting the $Q$-Wiener process to \eqref{QWp_ver1} or \eqref{QWp_ver2}, 
there exists a unique mild solution $X_t^Q$ of the SPDE \eqref{small_2d_spde} such that
$\sup_{t\in[0,1]}\EE[\|X_t^Q\|_\theta^2]<\infty$
under [A1] and $\lambda_{1,1}^*>0$. See Remark 3 in \cite{TKU2022arXiv}.

We treat thinned data with respect to space or time 
to estimate the coefficient parameters.
Set $\overline m_1\le M_1$ and $\overline m_2\le M_2$
such that $\overline m:=\overline m_1\overline m_2=O(N^\rho)$
for some $0<\rho<1\land2(1-\alpha)$, and let
\begin{equation*}
\overline y_{j_1}
=\biggl\lfloor\frac{M_1}{\overline m_1}\biggr\rfloor\frac{j_1}{M_1},
\quad
\overline z_{j_2}
=\biggl\lfloor\frac{M_2}{\overline m_2}\biggr\rfloor\frac{j_2}{M_2}
\end{equation*}
for $j_1=0,\ldots,\overline m_1$ and $j_2=0,\ldots,\overline m_2$.
For $\delta\in(0,1/2)$, 
there exist $J_1,J_2\ge1$, $m_1,m_2\ge1$ such that 
\begin{align*}
&\overline y_{J_1}<\delta \le \overline y_{J_1+1} < \cdots 
< \overline y_{J_1+m_1}\le 1-\delta < \overline y_{J_1+m_1+1},
\\
&\overline z_{J_2}<\delta \le \overline z_{J_2+1} < \cdots 
< \overline z_{J_2+m_2}\le 1-\delta < \overline z_{J_2+m_2+1},
\end{align*}
and let 
\begin{equation*}
\widetilde y_{j_1}
=\overline y_{J_1+j_1}
=\biggl\lfloor\frac{M_1}{\overline m_1}\biggr\rfloor\frac{J_1+j_1}{M_1},
\quad
\widetilde z_{j_2}
=\overline z_{J_2+j_2}
=\biggl\lfloor\frac{M_2}{\overline m_2}\biggr\rfloor\frac{J_2+j_2}{M_2},
\end{equation*}
$j_1=1,\ldots, m_1$ and $j_2=1,\ldots, m_2$,
and $D_\delta=[\delta,1-\delta]^2\subset D$.
Furthermore, let $n\le N$, 
$\widetilde t_i=
\lfloor\frac{N}{n}\rfloor
\frac{i}{N}$, $i=0,\ldots, n$ and $\Delta_n=\widetilde t_i-\widetilde t_{i-1}$. 
Note that $m:=m_1m_2=O(N^\rho)$,
$(\widetilde y_{j_1},\widetilde z_{j_2})\in D_\delta$ 
for any $j_1=1,\ldots, m_1$ and $j_2=1,\ldots, m_2$, 
and $\Delta_N=t_i-t_{i-1}$.

\section{Main results}\label{sec3}

\subsection{SPDE driven by $Q_1$-Wiener process}\label{sec3.1}
In this subsection, we consider estimation for  
the coefficient parameter $\theta=(\theta_0,\theta_1,\eta_1,\theta_2)$
in the SPDE \eqref{small_2d_spde}
driven by the $Q_1$-Wiener process defined as \eqref{QWp_ver1}. 
The flow of the parameter estimation method is as follows.
We first introduce the minimum contrast estimators 
of the coefficient parameters $\theta_1$, $\eta_1$ and $\theta_2$ 
using the thinned data with respect to space.
Next, we construct the approximate coordinate process 
utilizing these minimum contrast estimators, 
and provide the adaptive ML type estimators of $\theta_0$ 
based on the thinned data in time.

Let $\Delta_i X^{Q}(y,z)
=X_{t_i}^{Q}(y,z)-X_{t_{i-1}}^{Q}(y,z)$ and
$\Gamma(s)=\int_0^\infty x^{s-1}\ee^{-x}\dd x$ $(s>0)$.
The following proposition holds as in Tonaki et al. \cite{TKU2022arXiv}.
\begin{prop}\label{prop1}
Under [A1], it holds that uniformly in $(y,z)\in D_\delta$,  
\begin{equation*}
\EE\bigl[(\Delta_i X^{Q_1})^2(y,z)\bigr]
=\epsilon^2
\biggl\{
\Delta_N^\alpha\frac{\Gamma(1-\alpha)}{4\pi\alpha\theta_2}
\ee^{-\frac{\theta_1}{\theta_2}y}\ee^{-\frac{\eta_1}{\theta_2}z}
+O(\Delta_N)
\biggr\}
+r_{N,i},
\end{equation*}
where $\sum_{i=1}^N |r_{N,i}|=O(\Delta_N^{\beta})$
for any $\beta\in(0,1)$, and thus
\begin{equation}\label{prop1-eq2}
\EE\biggl[
\frac{\epsilon^{-2}}{N\Delta_N^\alpha}\sum_{i=1}^N(\Delta_i X^{Q_1})^2(y,z)
\biggr]
=\frac{\Gamma(1-\alpha)}{4\pi\alpha\theta_2}
\ee^{-\frac{\theta_1}{\theta_2}y}\ee^{-\frac{\eta_1}{\theta_2}z}
+O(\Delta_N^{1-\alpha} \lor \epsilon^{-2}\Delta_N^{1-\alpha+\beta}).
\end{equation}
\end{prop}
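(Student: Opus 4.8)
The plan is to use the spectral decomposition $X_t^{Q_1}(y,z)=\sum_{k,\ell\ge1}x_{k,\ell}^{Q_1}(t)e_{k,\ell}(y,z)$ and to exploit that the driving Brownian motions $\{w_{k,\ell}\}$ are independent. Writing $x_{k,\ell}^{Q_1}(t)=u_{k,\ell}(t)+\epsilon b_{k,\ell}(t)$, where $u_{k,\ell}(t)=\ee^{-\lambda_{k,\ell}t}\langle\xi,e_{k,\ell}\rangle_\theta$ is the deterministic initial-value part and $b_{k,\ell}(t)=\int_0^t\lambda_{k,\ell}^{-\alpha/2}\ee^{-\lambda_{k,\ell}(t-s)}\dd w_{k,\ell}(s)$, the increment splits accordingly. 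Because $\EE[\Delta_i b_{k,\ell}]=0$ and the $b_{k,\ell}$ are mutually independent across $(k,\ell)$, taking the expectation of the square kills the deterministic--stochastic cross terms and diagonalizes the stochastic part, giving
\begin{equation*}
\EE\bigl[(\Delta_i X^{Q_1})^2(y,z)\bigr]=(\Delta_i u(y,z))^2+\epsilon^2\sum_{k,\ell\ge1}\EE[(\Delta_i b_{k,\ell})^2]\,e_{k,\ell}(y,z)^2,
\end{equation*}
with $u(t)=\ee^{-tA_\theta}\xi$. The first task is to evaluate $\EE[(\Delta_i b_{k,\ell})^2]$ by the It\^o isometry: decomposing $\Delta_i b_{k,\ell}$ into the fresh increment $\int_{t_{i-1}}^{t_i}$ and the damped memory $(\ee^{-\lambda_{k,\ell}\Delta_N}-1)\int_0^{t_{i-1}}$, which are independent, and simplifying yields
\begin{equation*}
\EE[(\Delta_i b_{k,\ell})^2]=\frac{\lambda_{k,\ell}^{-\alpha}}{\lambda_{k,\ell}}(1-\ee^{-\lambda_{k,\ell}\Delta_N})-\frac{\lambda_{k,\ell}^{-\alpha}}{2\lambda_{k,\ell}}(1-\ee^{-\lambda_{k,\ell}\Delta_N})^2\ee^{-2\lambda_{k,\ell}t_{i-1}}.
\end{equation*}
The first ($i$-independent, stationary) piece will produce the leading term; the second ($t_{i-1}$-dependent, transient) piece together with $(\Delta_i u)^2$ will be collected into $r_{N,i}$.

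For the leading term I would insert $e_{k,\ell}(y,z)^2=4\sin^2(\pi k y)\sin^2(\pi\ell z)\ee^{-\frac{\theta_1}{\theta_2}y}\ee^{-\frac{\eta_1}{\theta_2}z}$ and write $\sin^2(\pi k y)\sin^2(\pi\ell z)=\tfrac14+(\text{terms in }\cos(2\pi k y),\cos(2\pi\ell z))$. The constant $\tfrac14$ produces
\begin{equation*}
\epsilon^2\ee^{-\frac{\theta_1}{\theta_2}y}\ee^{-\frac{\eta_1}{\theta_2}z}\sum_{k,\ell\ge1}\lambda_{k,\ell}^{-(1+\alpha)}(1-\ee^{-\lambda_{k,\ell}\Delta_N}).
\end{equation*}
Since $\lambda_{k,\ell}\sim\pi^2\theta_2(k^2+\ell^2)$, a polar-coordinate comparison gives $\sum_{k,\ell\ge1}g(\lambda_{k,\ell})\approx\frac{1}{4\pi\theta_2}\int_0^\infty g(\lambda)\dd\lambda$; substituting $u=\lambda\Delta_N$ and using $\int_0^\infty u^{-(1+\alpha)}(1-\ee^{-u})\dd u=\Gamma(1-\alpha)/\alpha$ (one integration by parts) yields exactly $\Delta_N^\alpha\frac{\Gamma(1-\alpha)}{4\pi\alpha\theta_2}$. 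The sum-versus-integral discretization error, together with the lower-limit correction $\int_0^{O(\Delta_N)}u^{-(1+\alpha)}(1-\ee^{-u})\dd u=O(\Delta_N^{1-\alpha})$ multiplied by the $\Delta_N^\alpha$ prefactor, account for the $\epsilon^2O(\Delta_N)$ remainder inside the braces.

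The remaining work, and the main obstacle, is to show that everything else assembles into $r_{N,i}$ with $\sum_{i=1}^N|r_{N,i}|=O(\Delta_N^\beta)$ \emph{uniformly in} $(y,z)\in D_\delta$. Three pieces enter. The oscillatory $\cos(2\pi k y),\cos(2\pi\ell z)$ sums I would bound by Abel summation, exploiting the monotone decay of $\lambda_{k,\ell}^{-(1+\alpha)}(1-\ee^{-\lambda_{k,\ell}\Delta_N})$ and the fact that on $D_\delta$ the points $y,z$ stay away from $0$ and $1$ (this is precisely where the restriction to $D_\delta$ is used). The transient term sums over $i$ through the geometric series $\sum_{i=1}^N\ee^{-2\lambda_{k,\ell}t_{i-1}}=(1-\ee^{-2\lambda_{k,\ell}})/(1-\ee^{-2\lambda_{k,\ell}\Delta_N})$, after which a final sum-to-integral estimate applies. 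The deterministic part $(\Delta_i u(y,z))^2$ I would control via $\Delta_i u=\ee^{-t_{i-1}A_\theta}(\ee^{-\Delta_N A_\theta}-I)\xi$, the analytic-semigroup smoothing bound $\|A_\theta\ee^{-t_{i-1}A_\theta}\|\le C/t_{i-1}$, and the Sobolev embedding $\mathscr D(A_\theta)\hookrightarrow C(\bar D)$ in two dimensions, available under [A1] since $\|A_\theta\xi\|_\theta<\infty$. The delicate point is to keep sup-norm (rather than $L^2(D_\delta)$) control for uniformity while summing in $i$, since naive interpolation between $\|\Delta_i u\|_{\mathscr D(A_\theta)}$ and $\|\Delta_i u\|_\theta$ loses too much; trading regularity against the $t_{i-1}^{-1}$ smoothing factor and splitting the $i$-sum at the crossover index is what makes $\sum_i|r_{N,i}|$ negligible. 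Finally I would read off \eqref{prop1-eq2} by dividing by $N\Delta_N^\alpha$ and using $N\Delta_N=1$, so that the $\epsilon^2 O(\Delta_N)$ and the $r_{N,i}$ contributions become $O(\Delta_N^{1-\alpha})$ and $O(\epsilon^{-2}\Delta_N^{1-\alpha+\beta})$, respectively.
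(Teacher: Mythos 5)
Your treatment of the stochastic part follows the paper's route exactly: the cross terms vanish since the noise increments are centered, the double sum diagonalizes by independence of the $w_{k,\ell}$, your It\^o-isometry formula for $\EE[(\Delta_i b_{k,\ell})^2]$ (stationary piece plus $\ee^{-2\lambda_{k,\ell}t_{i-1}}$-transient) is precisely the paper's $B_{1,i,\bs k}^{Q_1}+B_{2,i,\bs k}^{Q_1}$ split, and the lattice-density $1/(4\pi\theta_2)$ together with $\int_0^\infty u^{-(1+\alpha)}(1-\ee^{-u})\dd u=\Gamma(1-\alpha)/\alpha$ gives the correct leading constant; this is what the paper imports as Lemma \ref{lem2} (Lemma 5.4 of Tonaki et al.).

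The genuine gap is in the deterministic part, exactly where you flag the argument as delicate. You propose $\sup_{(y,z)}|\Delta_iu(y,z)|\lesssim\|A_\theta\Delta_iu\|_\theta$ (Sobolev embedding) plus semigroup smoothing and a crossover split of the $i$-sum. This cannot yield $\sum_{i=1}^N\sup_{(y,z)}|\Delta_iu(y,z)|^2=O(\Delta_N^\beta)$ under [A1]. First, every operator-norm bound available under [A1] is $\Delta_N$-free for early increments: $\|A_\theta\ee^{-t_{i-1}A_\theta}\|\,\|(\ee^{-\Delta_N A_\theta}-I)\xi\|_\theta\lesssim(\Delta_N/t_{i-1})\|A_\theta\xi\|_\theta=(i-1)^{-1}\|A_\theta\xi\|_\theta$, and any fractional-power variant gives $(\Delta_N/t_{i-1})^{\gamma}=(i-1)^{-\gamma}$; optimizing a crossover index between such bounds and the rate-free bound $\|(\ee^{-\Delta_N A_\theta}-I)A_\theta\xi\|_\theta=o(1)$ produces $o(1)$, never a power of $\Delta_N$. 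Second, and decisively, the intermediate quantity your reduction passes through is genuinely too large: under [A1] alone one only has
\begin{equation*}
\sum_{i=1}^N\|A_\theta\Delta_iu\|_\theta^2
\le\sum_{\bs k\in\mathbb N^2}\lambda_{\bs k}^2\langle\xi,e_{\bs k}\rangle_\theta^2
\bigl(1-\ee^{-\lambda_{\bs k}\Delta_N}\bigr)=o(1),
\end{equation*}
with no rate. Indeed, pick one mode $\bs k_j$ with $\lambda_{\bs k_j}\in[\ee^{j},2\ee^{j}]$ for each $j$ and set $\langle\xi,e_{\bs k_j}\rangle_\theta^2=\lambda_{\bs k_j}^{-2}j^{-2}$; then $\|A_\theta\xi\|_\theta^2<\infty$, so [A1] holds, yet already the single term $i=1$ satisfies $\|A_\theta\Delta_1u\|_\theta^2\ge(1-\ee^{-1})^2\sum_{\lambda_{\bs k_j}\ge\Delta_N^{-1}}j^{-2}\gtrsim1/\log(1/\Delta_N)$, which is not $O(\Delta_N^\beta)$ for any $\beta>0$. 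So the step fails as a method (the target statement is still true, but not by this route), and the loss is consequential: without the rate $O(\Delta_N^\beta)$ the remainder in \eqref{prop1-eq2} becomes $\epsilon^{-2}\Delta_N^{1-\alpha}o(1)$, which [A2] no longer controls.

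The paper's Lemma \ref{lem1} never passes through $\|A_\theta\Delta_iu\|_\theta$. Using only $|e_{\bs k}(y,z)|\lesssim1$, it applies Cauchy--Schwarz at the coefficient level,
\begin{equation*}
\Bigl(\sum_{\bs k}|A_{i,\bs k}|\Bigr)^2
\le\Bigl(\sum_{\bs k}\lambda_{\bs k}^2\langle\xi,e_{\bs k}\rangle_\theta^2\Bigr)
\Bigl(\sum_{\bs k}\frac{(1-\ee^{-\lambda_{\bs k}\Delta_N})^2}{\lambda_{\bs k}^2}
\,\ee^{-2\lambda_{\bs k}(i-1)\Delta_N}\Bigr),
\end{equation*}
performs the geometric $i$-sum mode by mode via $\sum_{i}\ee^{-2\lambda_{\bs k}(i-1)\Delta_N}\le(1-\ee^{-2\lambda_{\bs k}\Delta_N})^{-1}$, and reduces everything to the $\xi$-free quantity $\sum_{\bs k}\lambda_{\bs k}^{-2}(1-\ee^{-\lambda_{\bs k}\Delta_N})=O(\Delta_N^\beta)$, valid because $\sum_{\bs k}\lambda_{\bs k}^{-(2-\beta)}<\infty$ in two dimensions for every $\beta\in(0,1)$. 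The rate thus comes from the summability margin of $\lambda_{\bs k}^{-2}$ over $\mathbb N^2$, not from regularity of $\xi$ beyond [A1]; this is the idea your sketch is missing. A secondary shortfall: your polar comparison plus Abel summation controls the oscillatory and quadrant-boundary corrections only at order $\Delta_N^{1/2+\alpha}$, which is $O(\Delta_N)$ only when $\alpha\ge1/2$; the $\epsilon^2O(\Delta_N)$ claimed inside the braces for all $\alpha\in(0,1)$ requires the finer lattice analysis (Poisson summation, exploiting that the $\sin^2$ factors vanish on the axes $k=0$, $\ell=0$) carried out in the cited Lemma 5.4 rather than monotonicity arguments alone.
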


We make the following condition. 
\begin{enumerate}
\item[\textbf{[A2]}]
There exists $\beta\in[\alpha,1)$ such that 
$\epsilon^2 N^{(1-\alpha+\beta)\land(5/2-2\alpha)}\to\infty$
as $N\to\infty$ and $\epsilon\to0$.
\end{enumerate}
Note that the remainder term in \eqref{prop1-eq2} 
can be asymptotically ignored under [A2]. 
Let
\begin{equation*}
Z_N^{Q}(y,z)
=\frac{1}{N\Delta_N^\alpha}\sum_{i=1}^N(\Delta_i X^{Q})^2(y,z),
\end{equation*}
and define the contrast function as follows. 
\begin{equation*}
U_{N,m,\epsilon}^{(1)}(\theta_1,\eta_1,\theta_2)
=\sum_{j_1=1}^{m_1}\sum_{j_2=1}^{m_2}
\biggl\{\epsilon^{-2}Z_N^{Q_1}(\widetilde y_{j_1},\widetilde z_{j_2})
-\frac{\Gamma(1-\alpha)}{4\pi\alpha\theta_2}
\exp\biggl(
-\frac{\theta_1}{\theta_2} \widetilde y_{j_1}\biggr)
\exp\biggl(
-\frac{\eta_1}{\theta_2} \widetilde z_{j_2}
\biggr)
\biggr\}^2.
\end{equation*}
Let $\hat \theta_1$, $\hat \eta_1$ and $\hat \theta_2$ be minimum contrast estimators 
defined as 
\begin{equation*}
(\hat \theta_1,\hat \eta_1, \hat \theta_2)
=\underset{\theta_1,\eta_1,\theta_2}
{\mathrm{arginf}}\, U_{N,m,\epsilon}^{(1)}(\theta_1,\eta_1,\theta_2).
\end{equation*}
We set
$R_{N,\epsilon}
=N^{1\land2(1-\alpha)} \land \epsilon^2 N^{(1-\alpha+\beta)\land(5/2-2\alpha)}
\land \epsilon^4 N^{2(1-\alpha+\beta)}$. 

\begin{thm}\label{th1}
Under [A1] and [A2], it holds that 
for any $\gamma>0$ with $mN^{2\gamma}R_{N,\epsilon}^{-1}\to0$, 
as $N\to\infty$, $m\to\infty$ and $\epsilon\to0$, 
\begin{equation}\label{th1-eq1}
m^{1/2}N^\gamma
\begin{pmatrix}
\hat\theta_1-\theta_1^*
\\
\hat\eta_1-\eta_1^*
\\
\hat\theta_2-\theta_2^*
\end{pmatrix}
\pto0.
\end{equation}
\end{thm}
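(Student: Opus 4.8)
The plan is to treat $(\hat\theta_1,\hat\eta_1,\hat\theta_2)$ as a nonlinear least-squares (minimum contrast) estimator and to run the classical consistency-then-linearization argument, with the convergence rate ultimately governed by a pointwise second-moment bound on the data functional $\epsilon^{-2}Z_N^{Q_1}$. Throughout write $\vartheta=(\theta_1,\eta_1,\theta_2)$, $C=\Gamma(1-\alpha)/(4\pi\alpha)$, and
\[
\psi_\vartheta(y,z)=\frac{C}{\theta_2}\exp\Bigl(-\frac{\theta_1}{\theta_2}y-\frac{\eta_1}{\theta_2}z\Bigr),
\]
so that $U_{N,m,\epsilon}^{(1)}(\vartheta)=\sum_{j_1,j_2}\bigl(\epsilon^{-2}Z_N^{Q_1}(\widetilde y_{j_1},\widetilde z_{j_2})-\psi_\vartheta(\widetilde y_{j_1},\widetilde z_{j_2})\bigr)^2$. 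I also set the centred observation error at the true value, $\zeta_{j_1,j_2}=\epsilon^{-2}Z_N^{Q_1}(\widetilde y_{j_1},\widetilde z_{j_2})-\psi_{\vartheta^*}(\widetilde y_{j_1},\widetilde z_{j_2})$.

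The decisive preliminary step, and the one I expect to be the main obstacle, is the uniform estimate
\[
\sup_{(y,z)\in D_\delta}\EE\bigl[(\epsilon^{-2}Z_N^{Q_1}(y,z)-\psi_{\vartheta^*}(y,z))^2\bigr]=O(R_{N,\epsilon}^{-1}).
\]
Its squared-bias part is delivered directly by Proposition \ref{prop1} (the remainder there being $O(\Delta_N^{2(1-\alpha)}\lor\epsilon^{-4}\Delta_N^{2(1-\alpha+\beta)})$, which is $O(R_{N,\epsilon}^{-1})$), while the variance part requires a fourth-moment computation for the squared increments $(\Delta_i X^{Q_1})^2$ that tracks both the $\epsilon$- and the $N$-dependence; the three competing terms in $R_{N,\epsilon}$ are exactly what this covariance analysis of the spectral expansion of the $Q_1$-driven field produces. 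This is where Assumption [A2] and the admissible range of $\overline m=O(N^\rho)$ are genuinely used, and I would carry it out by adapting the moment estimates of Tonaki et al. \cite{TKU2022arXiv} to the small-dispersion regime.

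Granting this, consistency follows routinely. Dividing by $m$ and expanding the square, the cross term and $m^{-1}\sum_{j_1,j_2}\zeta_{j_1,j_2}^2$ both vanish in probability by the moment bound and Cauchy--Schwarz, leaving
\[
\frac1m U_{N,m,\epsilon}^{(1)}(\vartheta)\pto \mathbb{Y}(\vartheta):=\frac{1}{(1-2\delta)^2}\int_{D_\delta}(\psi_\vartheta-\psi_{\vartheta^*})^2\,\dd y\,\dd z
\]
uniformly on the compact $\Theta$, the limit being the Riemann-sum limit over the equidistributed thinned grid and the uniformity coming from equicontinuity of $\vartheta\mapsto\psi_\vartheta$. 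Since $\log\psi_\vartheta$ is affine in $(y,z)$ with coefficients $\log(C/\theta_2)$, $-\theta_1/\theta_2$, $-\eta_1/\theta_2$, equality $\psi_\vartheta=\psi_{\vartheta^*}$ on the set $D_\delta$ of nonempty interior forces $\vartheta=\vartheta^*$; hence $\mathbb{Y}$ has a unique minimizer at $\vartheta^*$ and $\hat\vartheta\pto\vartheta^*$.

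Finally, the rate. As $\vartheta^*\in\mathrm{Int}\,\Theta$ and $\hat\vartheta\pto\vartheta^*$, the normal equation $\partial_\vartheta U_{N,m,\epsilon}^{(1)}(\hat\vartheta)=0$ holds with probability tending to one, and Taylor-expanding it about $\vartheta^*$ gives $\hat\vartheta-\vartheta^*=\mathcal I_m^{-1}V_m\,(1+o_p(1))$, where $\mathcal I_m=m^{-1}\sum_{j_1,j_2}(\partial_\vartheta\psi_{\vartheta^*})(\partial_\vartheta\psi_{\vartheta^*})^\TT(\widetilde y_{j_1},\widetilde z_{j_2})$ and $V_m=m^{-1}\sum_{j_1,j_2}\zeta_{j_1,j_2}\,\partial_\vartheta\psi_{\vartheta^*}(\widetilde y_{j_1},\widetilde z_{j_2})$. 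The matrix $\mathcal I_m$ converges to the Gram matrix of $\partial_{\theta_1}\psi_{\vartheta^*},\partial_{\eta_1}\psi_{\vartheta^*},\partial_{\theta_2}\psi_{\vartheta^*}$ on $D_\delta$, which is positive definite by the same affine-log independence used for identifiability. For the score, the triangle inequality and the moment bound give $\EE\|V_m\|\le C\,m^{-1}\sum_{j_1,j_2}\EE|\zeta_{j_1,j_2}|=O(R_{N,\epsilon}^{-1/2})$---crucially, no averaging gain is asserted, which is precisely why the inflating factor $m^{1/2}$ can appear on the left. Consequently $\EE\|m^{1/2}N^\gamma V_m\|=O((mN^{2\gamma}R_{N,\epsilon}^{-1})^{1/2})\to0$, so Markov's inequality yields $m^{1/2}N^\gamma V_m\pto0$, while the quadratic Taylor remainder is $O_p(m^{1/2}N^\gamma R_{N,\epsilon}^{-1})=o_p(1)$; together with the convergence of $\mathcal I_m$ this establishes \eqref{th1-eq1}.
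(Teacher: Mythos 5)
Your proposal matches the paper's proof in both structure and substance: the paper likewise reduces everything to the uniform second-moment bound $\sup_{(y,z)\in D_\delta}\EE[(\epsilon^{-2}Z_N^{Q_1}(y,z)-g_{\nu^*}(y,z))^2]=O(R_{N,\epsilon}^{-1})$ (proved exactly as you outline, bias from Proposition \ref{prop1} and variance via a Gaussian fourth-moment/Isserlis computation built on the spectral estimates of Tonaki et al.), then invokes consistency, convergence of $\frac1m\partial^2 U^{(1)}_{N,m,\epsilon}(\nu^*)$ to an invertible matrix and its local uniform continuity, and finishes by showing the score at $\nu^*$ is $O_p(m^{1/2}N^\gamma R_{N,\epsilon}^{-1/2})+O(m^{1/2}N^\gamma(\Delta_N^{1-\alpha}\lor\epsilon^{-2}\Delta_N^{1-\alpha+\beta}))$ with no averaging gain, which is precisely your $V_m$ estimate under the condition $mN^{2\gamma}R_{N,\epsilon}^{-1}\to0$. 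The only cosmetic difference is that you absorb the bias into $\EE[\zeta_{j_1,j_2}^2]$ while the paper splits the score into a centered part (handled by Cauchy--Schwarz) and a bias part; both yield the same rate.
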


\begin{rmk}
Theorem \ref{th1} is the same result 
as Theorem 3.2 in Tonaki et al. \cite{TKU2022arXiv}. 
Indeed, if $\sigma$ is known in the SPDE \eqref{2d_spde}, 
then the assertion of Theorem 3.2 in \cite{TKU2022arXiv} holds 
for the coefficient parameters $\theta_1$, $\eta_1$ and $\theta_2$
instead of $s=\sigma^2/\theta_2$, $\kappa=\theta_1/\theta_2$ and $\eta=\eta_1/\theta_2$.
Theorem \ref{th1} shows that the estimators 
$\hat\theta_1$, $\hat\eta_1$ and $\hat\theta_2$ have $m^{1/2}N^\gamma$-consistency
and $m^{1/2}N^\gamma=o(R_{N,\epsilon}^{1/2})$.
Moreover, \eqref{th1-eq1} can be regarded as that 
for $\gamma'>0$ with $N^{\gamma'}R_{N,\epsilon}^{-1/2}\to0$,
\begin{equation*}
N^{\gamma'}
\begin{pmatrix}
\hat\theta_1-\theta_1^*
\\
\hat\eta_1-\eta_1^*
\\
\hat\theta_2-\theta_2^*
\end{pmatrix}
\pto0.
\end{equation*}
\end{rmk}
We make the following assumption. 
\begin{enumerate}
\item[\textbf{[A3]}]
There exists $\gamma>0$ such that $mN^{2\gamma}R_{N,\epsilon}^{-1}\to0$
as $N\to\infty$, $m\to\infty$ and $\epsilon\to0$.
\end{enumerate}
Note that under [A1]-[A3], \eqref{th1-eq1} holds with $\gamma>0$ in [A3].

By using the estimators $\hat\theta_1$, $\hat\eta_1$ and $\hat\theta_2$,
the approximate coordinate process of the coordinate process in \eqref{small_cp1_ver1}
is constructed as follows.
\begin{equation*}
\hat x_{k,\ell}^{Q_1}(\widetilde t_i)
=\frac{2}{M}\sum_{j_1=1}^{M_1}\sum_{j_2=1}^{M_2}
X_{\widetilde t_i}^{Q_1}(y_{j_1},z_{j_2})
\sin(\pi k y_{j_1})\sin(\pi \ell z_{j_2})
\exp\biggl(\frac{\hat\theta_1}{2\hat\theta_2}y_{j_1}\biggr)
\exp\biggl(\frac{\hat\eta_1}{2\hat\theta_2}z_{j_2}\biggr)
\end{equation*}
for $i=1,\ldots,n$. 
The estimator of $\theta_0$ is obtained  
by utilizing this approximate coordinate process 
$\{\hat x_{k,\ell}^{Q_1}(\widetilde t_i)\}_{i=1}^n$ 
and statistical inference for diffusion processes with a small dispersion parameter. 

\begin{en-text}
We construct the approximate coordinate process  
\begin{equation*}
\hat x_{k,\ell}^{Q_1}(\widetilde t_i)
=\frac{2}{M}\sum_{j_1=1}^{M_1}\sum_{j_2=1}^{M_2}
X_{\widetilde t_i}^{Q_1}(y_{j_1},z_{j_2})
\sin(\pi k y_{j_1})\sin(\pi \ell z_{j_2})
\exp\biggl(\frac{\hat\theta_1}{2\hat\theta_2}y_{j_1}\biggr)
\exp\biggl(\frac{\hat\eta_1}{2\hat\theta_2}z_{j_2}\biggr),
\end{equation*}
$i=1,\ldots,n$ of the coordinate process in \eqref{small_cp1_ver1}
by using the estimators $\hat\theta_1$, $\hat\eta_1$ and $\hat\theta_2$,
and then we consider estimation for $\theta_0$ 
by utilizing this approximate coordinate process 
$\{\hat x_{k,\ell}^{Q_1}(\widetilde t_i)\}_{i=1}^n$ 
and statistical inference for diffusion processes with small dispersion parameter. 
\end{en-text}
 
We consider the following asymptotics for $n$ and $\epsilon$. 
\begin{enumerate}
\item[\textbf{[B1]}]
$\lim_{n\to\infty,\epsilon\to0}n\epsilon^2=0$.

\item[\textbf{[B2]}]
$\varlimsup_{n\to\infty,\epsilon\to0}(n\epsilon^2)^{-1}<\infty$, that is,

\noindent
(I) $\lim_{n\to\infty,\epsilon\to0}(n\epsilon^2)^{-1}=0$, or
(II) $0<\lim_{n\to\infty,\epsilon\to0}(n\epsilon^2)^{-1}<\infty$.

\end{enumerate}

\begin{rmk}\label{rmk2}
[B1] and [B2] are the conditions corresponding to 
[B1]-[B3] in Uchida \cite{Uchida2003}.
\cite{Uchida2003} considered 
the contrast function based on the Euler-Maruyama approximation, 
and hence imposed the condition that  
$\lim_{n\to\infty,\epsilon\to0}(n\epsilon)^{-1}=0$
in order to asymptotically ignore the approximation error. 
In this paper, however, we deal with the Ornstein-Uhlenbeck process 
such as \eqref{small_dp_ver1}, and consider the contrast function based on 
the explicit likelihood of the Ornstein-Uhlenbeck process, 
so that this condition can be removed. 
See Appendix for details.
\end{rmk}

The contrast function is as follows. 
\begin{equation*}
V_{n,\epsilon}^{(1)}(\lambda|x)
=\sum_{i=1}^n
\frac{(x(\widetilde t_i)-\ee^{-\lambda \Delta_n}x(\widetilde t_{i-1}))^2}
{\frac{\epsilon^2(1-\ee^{-2\lambda\Delta_n})}{2\lambda^{1+\alpha}}}
+n\log \frac{1-\ee^{-2\lambda\Delta_n}}{2\lambda^{1+\alpha}\Delta_n}.
\end{equation*}
Set
\begin{equation*}
\hat \lambda_{1,1}
=\underset{\lambda}
{\mathrm{arginf}}\, V_{n,\epsilon}^{(1)}(\lambda|\hat x_{1,1}^{Q_1})
\end{equation*}
as the adaptive ML type estimator of $\lambda_{1,1}$. 
Moreover, let
\begin{equation*}
\hat\theta_0
=-\hat\lambda_{1,1}+\frac{\hat\theta_1^2+\hat\eta_1^2}{4\hat\theta_2}
+2\pi^2\hat\theta_2
\end{equation*}
be the estimator of $\theta_0$. Define 
\begin{equation*}
G_1(\lambda)=\frac{1-\ee^{-2\lambda}}{2\lambda^{1-\alpha}}x_{1,1}^{Q_1}(0)^2,
\quad
H_1(\lambda)=\frac{\alpha^2}{2\lambda^2}.
\end{equation*}
Set $I_1(\lambda)=H_1(\lambda)+c\, G_1(\lambda)$ under [B2], 
where $c=\lim_{n\to\infty,\epsilon\to0}(n\epsilon^2)^{-1}$.

We consider the following conditions
to control the error of the approximate coordinate process.
\begin{enumerate}
\item[\textbf{[C1]}]
$\frac{n^{2-\alpha}\epsilon^2 \lor n^{1-\beta}}{mN^{2\gamma}}\to0$,
$\frac{n^{1+\tau_1}}{(M_1 \land M_2)^{2\tau_1}}\to0$
and $\frac{n^{2-\alpha+\tau_2}\epsilon^2}{(M_1 \land M_2)^{2\tau_2}}\to0$
for some $\tau_1\in[0,1)$ and $\tau_2\in[0,\alpha)$.

\item[\textbf{[C2]}]
$\frac{n^{1-\alpha}\lor (n^\beta\epsilon^2)^{-1}}{mN^{2\gamma}}\to0$,
$\frac{n^{\tau_1}\epsilon^{-2}}{(M_1 \land M_2)^{2\tau_1}}\to0$
and $\frac{n^{1-\alpha+\tau_2}}{(M_1 \land M_2)^{2\tau_2}}\to0$
for some $\tau_1\in[0,1)$ and $\tau_2\in[0,\alpha)$.

\item[\textbf{[C3]}]
$\frac{n^{2-\alpha} \lor n^{1-\beta}\epsilon^{-2}}{mN^{2\gamma}}\to0$,
$\frac{n^{1+\tau_1}\epsilon^{-2}}{(M_1 \land M_2)^{2\tau_1}}\to0$
and $\frac{n^{2-\alpha+\tau_2}}{(M_1 \land M_2)^{2\tau_2}}\to0$
for some $\tau_1\in[0,1)$ and $\tau_2\in[0,\alpha)$.

\end{enumerate}
[C1] and [C2] are the conditions for consistency of an estimator of $\theta_0$ 
under certain situations, 
and [C3] is the condition for asymptotic normality of an estimator of $\theta_0$.

\begin{thm}\label{th2}
Assume [A1]-[A3].
\begin{enumerate}
\item[(a)]
If [B1] and [C1] hold, or if [B2] and [C2] hold, 
then as $n\to\infty$ and $\epsilon\to0$,
\begin{equation*}
\hat\theta_0 \pto \theta_0^*.
\end{equation*}

\item[(b)]
\begin{enumerate}
\item[(i)]
If [B1] and [C3] hold, then as $n\to\infty$ and $\epsilon\to0$,
\begin{equation*}
\epsilon^{-1}(\hat\theta_0-\theta_0^*)
\dto N(0,G_1(\lambda_{1,1}^*)^{-1}).
\end{equation*}

\item[(ii)]
If [B2] and [C3] hold, then as $n\to\infty$ and $\epsilon\to0$,
\begin{equation*}
\sqrt n(\hat\theta_0-\theta_0^*)
\dto N(0,I_1(\lambda_{1,1}^*)^{-1}).
\end{equation*}
\end{enumerate}
\end{enumerate}
\end{thm}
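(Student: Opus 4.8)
The plan is to reduce everything to the asymptotics of $\hat\lambda_{1,1}$, and then to reduce that to the small-noise diffusion theory of Uchida \cite{Uchida2003} and Kaino and Uchida \cite{Kaino_Uchida2018} while separately controlling the error created by replacing the true coordinate process $x_{1,1}^{Q_1}$ by its data-based surrogate $\hat x_{1,1}^{Q_1}$. Since
$\hat\theta_0-\theta_0^*=-(\hat\lambda_{1,1}-\lambda_{1,1}^*)+\bigl(f(\hat\theta_1,\hat\eta_1,\hat\theta_2)-f(\theta_1^*,\eta_1^*,\theta_2^*)\bigr)$
with $f(\theta_1,\eta_1,\theta_2)=(\theta_1^2+\eta_1^2)/(4\theta_2)+2\pi^2\theta_2$, the first step is a mean value expansion of the smooth map $f$: by Theorem \ref{th1} the second summand is $o_p((mN^{2\gamma})^{-1/2})$. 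For the consistency claim (a) this is $o_p(1)$ since $mN^{2\gamma}\to\infty$ under [A3]; for the normality claim (b) I would check that the first condition in [C3] forces $mN^{2\gamma}\epsilon^2\ge n^{1-\beta}\to\infty$ and $mN^{2\gamma}\ge n^{2-\alpha}\ge n$, so that after multiplication by $\epsilon^{-1}$ (case (i)) or $\sqrt n$ (case (ii)) the $f$-term vanishes. It therefore remains to establish the stated limits for $\hat\lambda_{1,1}-\lambda_{1,1}^*$.

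For $\hat\lambda_{1,1}$ I would work with the estimating equation $\partial_\lambda V_{n,\epsilon}^{(1)}(\hat\lambda_{1,1}\mid\hat x_{1,1}^{Q_1})=0$ and Taylor-expand it about $\lambda_{1,1}^*$, writing $\hat\lambda_{1,1}-\lambda_{1,1}^*=-\bigl(\partial_\lambda^2 V_{n,\epsilon}^{(1)}(\bar\lambda\mid\hat x_{1,1}^{Q_1})\bigr)^{-1}\partial_\lambda V_{n,\epsilon}^{(1)}(\lambda_{1,1}^*\mid\hat x_{1,1}^{Q_1})$ for an intermediate $\bar\lambda$. Because $V_{n,\epsilon}^{(1)}$ is, up to a $\lambda$-free constant, the exact Gaussian negative log-likelihood of the Ornstein--Uhlenbeck increments in \eqref{small_dp_ver1} with conditional variance $\epsilon^2(1-\ee^{-2\lambda\Delta_n})/(2\lambda^{1+\alpha})$, the score and observed information evaluated at the true process $x_{1,1}^{Q_1}$ are handled by the machinery of the Appendix. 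The key point is that the observed information $-\partial_\lambda^2 V_{n,\epsilon}^{(1)}(\lambda_{1,1}^*\mid x_{1,1}^{Q_1})$ splits into a drift part, coming from the $\lambda$-dependence of the conditional mean $\ee^{-\lambda\Delta_n}$ along the deterministic skeleton $\ee^{-\lambda_{1,1}^* t}x_{1,1}^{Q_1}(0)$ and of order $\epsilon^{-2}$ with limit $G_1(\lambda_{1,1}^*)$, and a diffusion part, coming from the $\lambda$-dependence of the conditional variance and of order $n$ with limit $H_1(\lambda_{1,1}^*)$; so it behaves like $\epsilon^{-2}G_1+nH_1=n\bigl((n\epsilon^2)^{-1}G_1+H_1\bigr)$. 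This explains the dichotomy: under [B1] the drift term dominates, giving the $\epsilon^{-1}$ rate and asymptotic variance $G_1(\lambda_{1,1}^*)^{-1}$, while under [B2] one gets the $\sqrt n$ rate and variance $\bigl(H_1(\lambda_{1,1}^*)+cG_1(\lambda_{1,1}^*)\bigr)^{-1}=I_1(\lambda_{1,1}^*)^{-1}$, with $c=\lim(n\epsilon^2)^{-1}$ selecting how much of $G_1$ survives; a matching martingale/Gaussian central limit theorem for the score completes (b) at the oracle level, and for (a) it suffices that the suitably normalized contrast converges to a limit uniquely minimized at $\lambda_{1,1}^*$, which holds under [B1],[C1] or [B2],[C2].

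The genuinely new work, and the main obstacle, is to show that replacing $x_{1,1}^{Q_1}$ by $\hat x_{1,1}^{Q_1}$ leaves these limits intact. The plan is to first prove an error estimate for $e_i:=\hat x_{1,1}^{Q_1}(\widetilde t_i)-x_{1,1}^{Q_1}(\widetilde t_i)$ that separates two sources: the plug-in error from using $(\hat\theta_1,\hat\eta_1,\hat\theta_2)$ in the exponential weights, controlled by Theorem \ref{th1} by a factor $(mN^{2\gamma})^{-1/2}$ times a smooth functional of $X^{Q_1}$, and the spatial Riemann-sum error from approximating the integral \eqref{small_cp2_ver1} by its average over the $M_1\times M_2$ grid, controlled through the spatial regularity of the field by a factor $(M_1\land M_2)^{-\tau_j}$ with the tunable exponents $\tau_1\in[0,1)$ and $\tau_2\in[0,\alpha)$. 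Substituting $\hat x_{1,1}^{Q_1}(\widetilde t_i)=x_{1,1}^{Q_1}(\widetilde t_i)+e_i$ into the score and observed information and expanding, the remainders are sums over $i$ of products of $e_i$ or $e_i^2$ with increments of $x_{1,1}^{Q_1}$; bounding these by the error estimate together with the moment bounds for the Ornstein--Uhlenbeck increments produces exactly the quantities $n^{2-\alpha}\epsilon^2$, $n^{1-\beta}$, $n^{1+\tau_1}/(M_1\land M_2)^{2\tau_1}$ and $n^{2-\alpha+\tau_2}\epsilon^2/(M_1\land M_2)^{2\tau_2}$ (and their counterparts with the opposite powers of $\epsilon$), whose smallness relative to the dominant $\epsilon^{-2}G_1+nH_1$ is precisely what [C1]--[C3] encode. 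The delicate part will be to track the correct powers of $\epsilon$, $n$ and $M_1\land M_2$ simultaneously across the drift- and diffusion-dominated regimes, so that a single error lemma yields negligibility after either the $\epsilon^{-1}$ or the $\sqrt n$ normalization; once this is in place, Slutsky's theorem upgrades the oracle limits to the stated limits for $\hat\lambda_{1,1}$, and hence, via the first paragraph, for $\hat\theta_0$.
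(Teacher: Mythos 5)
Your proposal follows essentially the same route as the paper: reduce $\hat\theta_0$ to $\hat\lambda_{1,1}$ via Theorem \ref{th1} (checking that [C3] makes the $f$-term negligible after the $\epsilon^{-1}$ or $\sqrt n$ normalization), establish the oracle dichotomy $\epsilon^{-2}G_1$ versus $nH_1$ for the exact-likelihood contrast of the Ornstein--Uhlenbeck coordinate process (this is exactly Theorem \ref{ap-th1} in the Appendix), and then prove an error lemma splitting $\hat x_{1,1}^{Q_1}-x_{1,1}^{Q_1}$ into the plug-in error (order $(mN^{2\gamma})^{-1/2}$ by Theorem \ref{th1}) and the spatial Riemann-sum error (order $(M_1\land M_2)^{-\tau_j}$), which is precisely the content of Lemma \ref{lem3} and the bounds on $\hat{\mathcal X}_n,\hat{\mathcal Y}_n,\hat{\mathcal Z}_n$ in Section \ref{sec5.1}. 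The plan and the conditions you identify match the paper's proof, so no gap to report.
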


\begin{rmk}
Since $I_1(\lambda)=H_1(\lambda)+c\,G_1(\lambda)$, 
$c=0$ under [B2](I) and $(n\epsilon^2)^{-1}\to c\neq0$ under [B2](II),
the assertion in Theorem \ref{th2} (b)-(ii) can be rewritten as follows.
\begin{enumerate}
\item[(1)]
If [B2](I) and [C3] hold, then as $n\to\infty$ and $\epsilon\to0$,
\begin{equation*}
\sqrt n(\hat\theta_0-\theta_0^*)
\dto N(0,H_1(\lambda_{1,1}^*)^{-1}).
\end{equation*}

\item[(2)]
If [B2](II) and [C3] hold, then as $n\to\infty$ and $\epsilon\to0$,
\begin{equation*}
\epsilon^{-1}(\hat\theta_0-\theta_0^*)
\dto N(0,c I_1(\lambda_{1,1}^*)^{-1}).
\end{equation*}
\end{enumerate}
\end{rmk}

\begin{rmk}
Theorem 3.3 in Tonaki et al. \cite{TKU2022arXiv} showed that 
when the driving noise is a $Q_1$-Wiener process, 
the estimator of $\theta_0$ has asymptotic normality with convergence rate $\sqrt n$. 
However, according to Theorem \ref{th2} (b)-(i), when [B1] $n\epsilon^2\to0$ holds, 
the estimator of $\theta_0$ has asymptotic normality with convergence rate $\epsilon^{-1}$, 
which is faster than the convergence rate 
$\sqrt n$ of Theorem 3.3 in \cite{TKU2022arXiv}. 
This means that our estimator $\hat\theta_0$ is better 
than the estimator of $\theta_0$ proposed by \cite{TKU2022arXiv}.
\end{rmk}

\subsection{SPDE driven by $Q_2$-Wiener process}\label{sec3.2}
In this subsection, we study estimation for 
the coefficient parameter $\theta=(\theta_0,\theta_1,\eta_1,\theta_2)$
in the SPDE \eqref{small_2d_spde}
driven by the $Q_2$-Wiener process defined as \eqref{QWp_ver2}. 

In a similar way to Proposition 3.4 in Tonaki et al. \cite{TKU2022arXiv}, 
the following proposition holds.
\begin{prop}\label{prop2}
Under [A1], it holds that uniformly in $(y,z)\in D_\delta$,
\begin{equation*}
\EE[(\Delta_i X^{Q_2})^2(y,z)]
=
\epsilon^2
\biggl\{
\frac{\Delta_N^\alpha\Gamma(1-\alpha)}{4\pi\alpha\theta_2^{1-\alpha}}
\ee^{-\frac{\theta_1}{\theta_2}y}\ee^{-\frac{\eta_1}{\theta_2}z}
+O(\Delta_N)
\biggr\}+r_{N,i},
\end{equation*}
where $\sum_{i=1}^N |r_{N,i}|=O(\Delta_N^\beta)$ for any $\beta\in(0,1)$, and hence
\begin{equation*}
\EE\biggl[
\frac{\epsilon^{-2}}{N\Delta_N^\alpha}\sum_{i=1}^N(\Delta_i X^{Q_2})^2(y,z)
\biggr]
=\frac{\Gamma(1-\alpha)}{4\pi\alpha\theta_2^{1-\alpha}}
\ee^{-\frac{\theta_1}{\theta_2}y}\ee^{-\frac{\eta_1}{\theta_2}z}
+O(\Delta_N^{1-\alpha} \lor \epsilon^{-2}\Delta_N^{1-\alpha+\beta}).
\end{equation*}
\end{prop}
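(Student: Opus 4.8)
The plan is to follow the proof of Proposition \ref{prop1} in its structure, changing only the noise coefficient from $\lambda_{k,\ell}^{-\alpha/2}$ to $\mu_{k,\ell}^{-\alpha/2}$ and tracking how this alters the limiting constant. First I would decompose the increment spectrally as $\Delta_i X^{Q_2}(y,z)=\sum_{k,\ell\ge1}\Delta_i x_{k,\ell}^{Q_2}\,e_{k,\ell}(y,z)$ and split each coordinate increment, via \eqref{small_cp1_ver2}, into the deterministic initial part $\Delta_i x_{k,\ell}^{\mathrm{det}}=(\ee^{-\lambda_{k,\ell}t_i}-\ee^{-\lambda_{k,\ell}t_{i-1}})\langle\xi,e_{k,\ell}\rangle_\theta$ and a centered stochastic integral $\Delta_i x_{k,\ell}^{\mathrm{noise}}$. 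Taking the expectation of the square, all cross terms vanish (by independence of the $w_{k,\ell}$ and the centering of the stochastic integrals), leaving
\begin{equation*}
\EE[(\Delta_i X^{Q_2})^2(y,z)]
=\Bigl(\sum_{k,\ell\ge1}\Delta_i x_{k,\ell}^{\mathrm{det}}\,e_{k,\ell}(y,z)\Bigr)^2
+\sum_{k,\ell\ge1}\VV[\Delta_i x_{k,\ell}^{\mathrm{noise}}]\,e_{k,\ell}(y,z)^2.
\end{equation*}
The deterministic first term is placed into $r_{N,i}$; its summability $\sum_i|r_{N,i}|=O(\Delta_N^\beta)$ follows exactly as in \cite{TKU2022arXiv} from $\|A_\theta\xi\|_\theta^2<\infty$ in [A1].

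Next I would evaluate the noise variance by the It\^o isometry. With $\Delta=\Delta_N$,
\begin{equation*}
\VV[\Delta_i x_{k,\ell}^{\mathrm{noise}}]
=\epsilon^2\mu_{k,\ell}^{-\alpha}
\Bigl[(1-\ee^{-\lambda_{k,\ell}\Delta})^2\frac{1-\ee^{-2\lambda_{k,\ell}t_{i-1}}}{2\lambda_{k,\ell}}
+\frac{1-\ee^{-2\lambda_{k,\ell}\Delta}}{2\lambda_{k,\ell}}\Bigr].
\end{equation*}
Replacing $1-\ee^{-2\lambda_{k,\ell}t_{i-1}}$ by $1$ (the dropped part is exponentially small at high frequencies and vanishes at $i=1$, so its total over $i$ is absorbed into $r_{N,i}$), the bracket collapses by the identity $(1-\ee^{-\lambda\Delta})^2+(1-\ee^{-2\lambda\Delta})=2(1-\ee^{-\lambda\Delta})$ to $\lambda_{k,\ell}^{-1}(1-\ee^{-\lambda_{k,\ell}\Delta})$. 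Using $e_{k,\ell}(y,z)^2=4\sin^2(\pi ky)\sin^2(\pi\ell z)\ee^{-\frac{\theta_1}{\theta_2}y}\ee^{-\frac{\eta_1}{\theta_2}z}$ and replacing each $\sin^2$ by its mean $1/2$ (the oscillatory remainder being of lower order uniformly on $D_\delta$), the leading contribution is
\begin{equation*}
\epsilon^2\ee^{-\frac{\theta_1}{\theta_2}y}\ee^{-\frac{\eta_1}{\theta_2}z}
\sum_{k,\ell\ge1}\frac{\mu_{k,\ell}^{-\alpha}}{\lambda_{k,\ell}}(1-\ee^{-\lambda_{k,\ell}\Delta}).
\end{equation*}

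The core step is the asymptotics of this sum as $\Delta\to0$. Using $\lambda_{k,\ell}\sim\pi^2(k^2+\ell^2)\theta_2$ and $\mu_{k,\ell}\sim\pi^2(k^2+\ell^2)$, I would approximate the first-quadrant lattice sum by $\frac{\pi}{4}\int_0^\infty(\pi^2 u)^{-\alpha}(\pi^2 u\,\theta_2)^{-1}(1-\ee^{-\pi^2 u\theta_2\Delta})\,\dd u$. The substitution $w=\pi^2 u$ together with the standard formula $\int_0^\infty w^{-1-\alpha}(1-\ee^{-wa})\,\dd w=a^\alpha\Gamma(1-\alpha)/\alpha$ (valid for $\alpha\in(0,1)$, obtained by a single integration by parts) with $a=\theta_2\Delta$ gives $\frac{1}{4\pi\theta_2}(\theta_2\Delta)^\alpha\Gamma(1-\alpha)/\alpha=\frac{\Delta^\alpha\Gamma(1-\alpha)}{4\pi\alpha\theta_2^{1-\alpha}}$. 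The appearance of $\theta_2^{1-\alpha}$ (rather than $\theta_2$ as in Proposition \ref{prop1}) is precisely the footprint of $\mu_{k,\ell}$ carrying no $\theta_2$, confirming the stated constant; the subleading correction to this expansion supplies the $O(\Delta_N)$ inside the braces. The second (averaged) display then follows by summing over $i$, dividing by $\epsilon^2 N\Delta_N^\alpha$, and using $N\Delta_N=1$ together with $\sum_i|r_{N,i}|=O(\Delta_N^\beta)$, which yields the error $O(\Delta_N^{1-\alpha}\lor\epsilon^{-2}\Delta_N^{1-\alpha+\beta})$.

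The main obstacle is the uniform-in-$(y,z)\in D_\delta$ control of the remainders and the bound $\sum_{i=1}^N|r_{N,i}|=O(\Delta_N^\beta)$. Four sources must be estimated: the deterministic initial square; the discrepancy from replacing $1-\ee^{-2\lambda_{k,\ell}t_{i-1}}$ by $1$; the oscillatory $\cos(2\pi ky),\cos(2\pi\ell z)$ terms from expanding $\sin^2$; and the sum-to-integral (Euler--Maclaurin) error, including the gap between the exact eigenvalues $\lambda_{k,\ell},\mu_{k,\ell}$ and their leading growth $\pi^2(k^2+\ell^2)$. Because $\mu_{k,\ell}$ and $\lambda_{k,\ell}$ share the same leading order, each estimate is structurally identical to the corresponding one already carried out for the $Q_1$-noise in \cite{TKU2022arXiv}; the bounds transfer with only the constants adjusted, which is exactly what underlies the phrase ``in a similar way'' in the statement.
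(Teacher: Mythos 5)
Your proposal is correct and follows essentially the same route as the paper: decompose $\Delta_i x^{Q_2}_{\bs k}$ into the deterministic part $A_{i,\bs k}$ (absorbed into $r_{N,i}$ via [A1], exactly as in Lemma \ref{lem1}) plus the centered stochastic part $B^{Q_2}_{i,\bs k}$ with damping $\mu_{\bs k}^{-\alpha/2}$, compute its variance, and defer the uniform-in-$(y,z)$ remainder bounds to the corresponding lemmas of \cite{TKU2022arXiv}. The only difference is one of detail, not of method: the paper simply cites Lemma 5.6 of \cite{TKU2022arXiv} for the $Q_2$ analogue of Lemma \ref{lem2}, whereas you explicitly carry out the It\^o isometry, the identity $(1-\ee^{-\lambda\Delta})^2+(1-\ee^{-2\lambda\Delta})=2(1-\ee^{-\lambda\Delta})$, and the lattice-to-integral computation producing $\Gamma(1-\alpha)/(4\pi\alpha\theta_2^{1-\alpha})$, which correctly exhibits why $\theta_2^{1-\alpha}$ replaces $\theta_2$.
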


Therefore, setting the contrast function as 
\begin{equation*}
U_{N,m,\epsilon}^{(2)}(\theta_1,\eta_1,\theta_2)
=\sum_{j_1=1}^{m_1}\sum_{j_2=1}^{m_2}
\biggl\{\epsilon^{-2}Z_N^{Q_2}(\widetilde y_{j_1},\widetilde z_{j_2})
-\frac{\Gamma(1-\alpha)}{4\pi\alpha\theta_2^{1-\alpha}}
\exp\biggl(
-\frac{\theta_1}{\theta_2} \widetilde y_{j_1}
\biggr)
\exp\biggl(
-\frac{\eta_1}{\theta_2} \widetilde z_{j_2}
\biggr)
\biggr\}^2,
\end{equation*}
and letting 
$\tilde \theta_1$, $\tilde \eta_1$ and $\tilde \theta_2$ 
be minimum contrast estimators defined as 
\begin{equation*}
(\tilde \theta_1,\tilde \eta_1,\tilde \theta_2)
=\underset{\theta_1,\eta_1,\theta_2}
{\mathrm{arginf}}\, U_{N,m,\epsilon}^{(2)}(\theta_1,\eta_1,\theta_2),
\end{equation*}
we obtain the following theorem as in Theorem \ref{th1}.
\begin{thm}\label{th3}
Under [A1] and [A2], it holds that 
for any $\gamma>0$ with $mN^{2\gamma}R_{N,\epsilon}^{-1}\to0$, 
as $N\to\infty$, $m\to\infty$ and $\epsilon\to0$, 
\begin{equation*}
m^{1/2}N^\gamma
\begin{pmatrix}
\tilde\theta_1-\theta_1^*
\\
\tilde\eta_1-\eta_1^*
\\
\tilde\theta_2-\theta_2^*
\end{pmatrix}
\pto0.
\end{equation*}
\end{thm}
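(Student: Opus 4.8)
The plan is to mirror the proof of Theorem~\ref{th1} (equivalently, the proof of Proposition~3.4 and Theorem~3.2 in Tonaki et al. \cite{TKU2022arXiv}), because the only structural difference between the two cases is the leading coefficient: Proposition~\ref{prop2} produces $\frac{\Gamma(1-\alpha)}{4\pi\alpha\theta_2^{1-\alpha}}$ in place of the $\frac{\Gamma(1-\alpha)}{4\pi\alpha\theta_2}$ of Proposition~\ref{prop1}. This change originates from $\mu_{k,\ell}=\pi^2(k^2+\ell^2)+\mu_0\sim\lambda_{k,\ell}/\theta_2$ as $k,\ell\to\infty$, so that $\mu_{k,\ell}^{-\alpha}\sim\theta_2^{\alpha}\lambda_{k,\ell}^{-\alpha}$; the extra factor $\theta_2^{\alpha}$ turns the $\theta_2^{-1}$ amplitude of the $Q_1$ case into $\theta_2^{-(1-\alpha)}$. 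Since $\mu_0$ enters only through lower-order terms, it does not appear in $U_{N,m,\epsilon}^{(2)}$ and plays no role in estimating $(\theta_1,\eta_1,\theta_2)$.

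First I would record the identifiability underlying the contrast. Writing $g(\theta_1,\eta_1,\theta_2;y,z)=\frac{\Gamma(1-\alpha)}{4\pi\alpha\theta_2^{1-\alpha}}\ee^{-\frac{\theta_1}{\theta_2}y}\ee^{-\frac{\eta_1}{\theta_2}z}$, the map $(\theta_1,\eta_1,\theta_2)\mapsto g(\theta_1,\eta_1,\theta_2;\cdot)$ is injective on $\Theta$: the exponential decay rates in $y$ and $z$ recover $\theta_1/\theta_2$ and $\eta_1/\theta_2$, and the amplitude then recovers $\theta_2$ because $\theta_2\mapsto\theta_2^{-(1-\alpha)}$ is strictly monotone for $\alpha\in(0,1)$. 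As the grid points $(\widetilde y_{j_1},\widetilde z_{j_2})$ become dense in $D_\delta$, the normalized matrix $m^{-1}\Lambda_m$, where $\Lambda_m=\sum_{j_1,j_2}\nabla g(\theta^*;\widetilde y_{j_1},\widetilde z_{j_2})\nabla g(\theta^*;\widetilde y_{j_1},\widetilde z_{j_2})^{\TT}$, converges to a positive-definite integral $\int_{D_\delta}\nabla g\,\nabla g^{\TT}$, the three components $\partial_{\theta_1}g,\partial_{\eta_1}g,\partial_{\theta_2}g$ being linearly independent on $D_\delta$; this is the nondegeneracy used below.

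The core analytic step, and the main obstacle, is to control the fluctuation of the empirical field. Set $\zeta_{j_1,j_2}=\epsilon^{-2}Z_N^{Q_2}(\widetilde y_{j_1},\widetilde z_{j_2})-g(\theta^*;\widetilde y_{j_1},\widetilde z_{j_2})$. By Proposition~\ref{prop2} its mean is $O(\Delta_N^{1-\alpha}\lor\epsilon^{-2}\Delta_N^{1-\alpha+\beta})$, and since $\Delta_N=1/N$ each of these bias terms is $O(R_{N,\epsilon}^{-1/2})$ by the definition of $R_{N,\epsilon}$ (indeed $R_{N,\epsilon}\le N^{2(1-\alpha)}$ and $R_{N,\epsilon}\le\epsilon^4N^{2(1-\alpha+\beta)}$). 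For the variance I would use that, under [A1], $\Delta_iX^{Q_2}(y,z)$ is Gaussian with deterministic mean $\mu_i$ and centered part $G_i$, so the Isserlis/Wick formula gives
\[
\cov\bigl[(\Delta_iX^{Q_2})^2,(\Delta_{i'}X^{Q_2})^2\bigr]
=2\,\cov[G_i,G_{i'}]^2+4\,\mu_i\mu_{i'}\cov[G_i,G_{i'}].
\]
Inserting this into $\VV[\epsilon^{-2}Z_N^{Q_2}]$ and summing over $i,i'$ produces the three regimes encoded in $R_{N,\epsilon}$: the pure-noise term $2\cov[G_i,G_{i'}]^2$ gives the $N^{1\land2(1-\alpha)}$ rate (diagonal-dominant for $\alpha<1/2$, increment-correlation-dominant for $\alpha>1/2$), while the mixed terms contribute the $\epsilon^2N^{(1-\alpha+\beta)\land(5/2-2\alpha)}$ and $\epsilon^4N^{2(1-\alpha+\beta)}$ rates. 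The decisive point is that every spectral sum $\sum_{k,\ell}\mu_{k,\ell}^{-\alpha}(\cdots)$ has the same leading asymptotics as the corresponding $\lambda_{k,\ell}^{-\alpha}$ sum of the $Q_1$ case, because $\mu_{k,\ell}/\lambda_{k,\ell}\to\theta_2^{-1}$; hence the bound $\VV[\zeta_{j_1,j_2}]=O(R_{N,\epsilon}^{-1})$ carries over, together with a cross-covariance estimate (spatial decorrelation across distinct $(\widetilde y_{j_1},\widetilde z_{j_2})$) ensuring that $\sum_{j_1,j_2}\zeta_{j_1,j_2}\nabla g(\theta^*;\widetilde y_{j_1},\widetilde z_{j_2})$ has second moment $O(mR_{N,\epsilon}^{-1})$.

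Finally I would close the M-estimation argument, first establishing consistency $(\tilde\theta_1,\tilde\eta_1,\tilde\theta_2)\pto(\theta_1^*,\eta_1^*,\theta_2^*)$ via a uniform law of large numbers for $U_{N,m,\epsilon}^{(2)}$ and the identifiability above, so that on the consistency event a Taylor expansion of $g$ about $\theta^*$ is legitimate. The first-order condition then yields
\[
\Lambda_m\bigl((\tilde\theta_1,\tilde\eta_1,\tilde\theta_2)-(\theta_1^*,\eta_1^*,\theta_2^*)\bigr)
=\sum_{j_1,j_2}\zeta_{j_1,j_2}\nabla g(\theta^*;\widetilde y_{j_1},\widetilde z_{j_2})+(\text{quadratic remainder}),
\]
and since $\Lambda_m^{-1}=O(m^{-1})$ by the nondegeneracy of $m^{-1}\Lambda_m$, the estimation error is $\Lambda_m^{-1}$ applied to a bias term of size $O(mR_{N,\epsilon}^{-1/2})$ plus a fluctuation of size $O_p(m^{1/2}R_{N,\epsilon}^{-1/2})$. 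Multiplying by $m^{1/2}N^\gamma$ reduces the claim to $(mN^{2\gamma}R_{N,\epsilon}^{-1})^{1/2}\to0$ for the bias and $N^{2\gamma}R_{N,\epsilon}^{-1}\to0$ for the fluctuation, both of which follow from the hypothesis $mN^{2\gamma}R_{N,\epsilon}^{-1}\to0$ (the latter because $m\ge1$). The hard part is the variance computation of the previous paragraph: verifying that the Gaussian quadratic functional $Z_N^{Q_2}$ concentrates at exactly rate $R_{N,\epsilon}^{-1/2}$, uniformly over the thinned spatial grid, and that replacing $\lambda_{k,\ell}$ by $\mu_{k,\ell}$ in the noise amplitude leaves this rate unchanged.
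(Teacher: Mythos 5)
Your proposal is correct and is essentially the paper's route: the paper's entire proof of Theorem \ref{th3} is the observation that, once Proposition \ref{prop2} replaces Proposition \ref{prop1} (the only change being the amplitude $\theta_2^{-(1-\alpha)}$ in place of $\theta_2^{-1}$, coming from $\mu_{k,\ell}\sim\lambda_{k,\ell}/\theta_2$, so every spectral sum has the same order), the argument of Theorem \ref{th1} goes through verbatim --- the bias bound $O(\Delta_N^{1-\alpha}\lor\epsilon^{-2}\Delta_N^{1-\alpha+\beta})=O(R_{N,\epsilon}^{-1/2})$, the Isserlis-type variance bound $O(R_{N,\epsilon}^{-1})$ uniform on $D_\delta$, and the Taylor-expansion/M-estimation closure, all of which you reproduce. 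The one place you deviate is the score fluctuation: you invoke an unproven spatial decorrelation estimate to claim that $\sum_{j_1,j_2}\zeta_{j_1,j_2}\nabla g(\theta^*;\widetilde y_{j_1},\widetilde z_{j_2})$ has second moment $O(mR_{N,\epsilon}^{-1})$, i.e.\ fluctuation $O_p(m^{1/2}R_{N,\epsilon}^{-1/2})$. That claim is delicate (the fields $Z_N^{Q_2}(\widetilde y_{j_1},\widetilde z_{j_2})$ at distinct grid points are driven by the same noise, so cross-covariances are not obviously negligible at scale $R_{N,\epsilon}^{-1}$) and, more to the point, unnecessary: the paper simply applies the Cauchy--Schwarz inequality inside the sum, $\bigl|\sum_{j}(\zeta_j-\EE\zeta_j)\nabla g_j\bigr|\le\bigl(\sum_j(\zeta_j-\EE\zeta_j)^2\bigr)^{1/2}\bigl(\sum_j|\nabla g_j|^2\bigr)^{1/2}=O_p(mR_{N,\epsilon}^{-1/2})$, which needs only the pointwise variance bound; after applying $\Lambda_m^{-1}=O(m^{-1})$ and multiplying by $m^{1/2}N^\gamma$ this still gives $O_p((mN^{2\gamma}R_{N,\epsilon}^{-1})^{1/2})\to0$, i.e.\ exactly the hypothesis. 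Replacing your decorrelation step by this Cauchy--Schwarz bound removes the only unproved assertion; everything else in your argument stands.
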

The approximation of \eqref{small_cp2_ver2} is defined as follows.
\begin{equation*}
\tilde x_{k,\ell}^{Q_2}(\widetilde t_i)
=\frac{2}{M}\sum_{j_1=1}^{M_1}\sum_{j_2=1}^{M_2}
X_{\widetilde t_i}^{Q_2}(y_{j_1},z_{j_2})
\sin(\pi k y_{j_1})\sin(\pi \ell z_{j_2})
\exp\biggl(\frac{\tilde\theta_1}{2\tilde\theta_2}y_{j_1}\biggr)
\exp\biggl(\frac{\tilde\eta_1}{2\tilde\theta_2}z_{j_2}\biggr)
\end{equation*}
for $i=1,\ldots,n$.
We set the contrast function by 
\begin{equation*}
V_{n,\epsilon}^{(2)}(\lambda,\mu|x)
=\sum_{i=1}^n
\frac{(x(\widetilde t_i)-\ee^{-\lambda \Delta_n}x(\widetilde t_{i-1}))^2}
{\frac{\epsilon^2(1-\ee^{-2\lambda\Delta_n})}{2\lambda\mu^\alpha}}
+n\log \frac{1-\ee^{-2\lambda\Delta_n}}{2\lambda\mu^\alpha\Delta_n},
\end{equation*}
and if $\mu_0$ is known, then 
$\mu_{1,1}=2 \pi^2 + \mu_0$ is known and
let
\begin{equation*}
\tilde \lambda_{1,1}
=\underset{\lambda}
{\mathrm{arginf}}\, V_{n,\epsilon}^{(2)}(\lambda,\mu_{1,1}|
\tilde x_{1,1}^{Q_2})
\end{equation*}
as the adaptive ML type estimator of $\lambda_{1,1}$, 
or if $\mu_0$ is unknown, then $\mu_{1,1}=2 \pi^2 + \mu_0$ is unknown and
let
\begin{equation*}
(\tilde \lambda_{1,1}, \tilde \mu_{1,1})
=\underset{\lambda,\mu}
{\mathrm{arginf}}\, V_{n,\epsilon}^{(2)}(\lambda,\mu|
\tilde x_{1,1}^{Q_2})
\end{equation*}
as the adaptive ML type estimator of $(\lambda_{1,1},\mu_{1,1})$. 
Moreover, let
\begin{equation*}
\tilde\theta_0
=-\tilde\lambda_{1,1}+\frac{\tilde\theta_1^2+\tilde\eta_1^2}{4\tilde\theta_2}
+2\pi^2\tilde\theta_2,
\quad
\tilde\mu_0=\tilde\mu_{1,1}-2\pi^2,
\end{equation*}
\begin{equation*}
G_2(\lambda,\mu)=\frac{1-\ee^{-2\lambda}}{2\lambda}\mu^\alpha x_{1,1}^{Q_2}(0)^2,
\quad
H_2(\mu)=\frac{\alpha^2}{2\mu^2},
\quad
I_2(\lambda,\mu)=\mathrm{diag} \{G_2(\lambda,\mu),H_2(\mu)\}.
\end{equation*}
We additionally consider the following conditions to control the approximation error.
\begin{enumerate}
\item[\textbf{[C4]}]
$\frac{n^{3-\alpha}\epsilon^4 \lor n^{2-\beta}\epsilon^2}{mN^{2\gamma}}\to0$,
$\frac{n^{2+\tau_1}\epsilon^2}{(M_1 \land M_2)^{2\tau_1}}\to0$
and $\frac{n^{3-\alpha+\tau_2}\epsilon^4}{(M_1 \land M_2)^{2\tau_2}}\to0$
for some $\tau_1\in[0,1)$ and $\tau_2\in[0,\alpha)$.

\item[\textbf{[C5]}]
$\frac{(n^\alpha\epsilon^2)^{-1} \lor (n^{1+\beta}\epsilon^4)^{-1}}{mN^{2\gamma}}\to0$,
$\frac{n^{-1+\tau_1}\epsilon^{-4}}{(M_1 \land M_2)^{2\tau_1}}\to0$
and $\frac{n^{-\alpha+\tau_2}\epsilon^{-2}}{(M_1 \land M_2)^{2\tau_2}}\to0$
for some $\tau_1\in[0,1)$ and $\tau_2\in[0,\alpha)$.

\end{enumerate}

\begin{thm}\label{th4}
Assume [A1]-[A3]. 
\begin{enumerate}
\item[(1)]
Suppose that $\mu_0$ is known.
\begin{enumerate}
\item[(a)]
If [C1] and [C4] hold, then as $n\to\infty$ and $\epsilon\to0$,
\begin{equation*}
\tilde \theta_0\pto \theta_0^*.
\end{equation*}

\item[(b)]
If [C3] and [C4] hold, then as $n\to\infty$ and $\epsilon\to0$,
\begin{equation*}
\epsilon^{-1}(\tilde\theta_0-\theta_0^*)
\dto N(0,G_2(\lambda_{1,1}^*,\mu_{1,1})^{-1}).
\end{equation*}
\end{enumerate}

\item[(2)]
Suppose that $\mu_0$ is unknown.
\begin{enumerate}
\item[(a)] 
If [C4] and [C5] hold, then as $n\to\infty$ and $\epsilon\to0$,
\begin{equation*}
(\tilde \theta_0,\tilde \mu_0)\pto (\theta_0^*,\mu_0^*).
\end{equation*}

\item[(b)]
If [C3]-[C5] hold, then as $n\to\infty$ and $\epsilon\to0$,
\begin{equation*}
\begin{pmatrix}
\epsilon^{-1}(\tilde\theta_0-\theta_0^*)
\\
\sqrt n(\tilde\mu_0-\mu_0^*)
\end{pmatrix}
\dto N(0,I_2(\lambda_{1,1}^*,\mu_{1,1}^*)^{-1}).
\end{equation*}

\end{enumerate}

\end{enumerate}

\end{thm}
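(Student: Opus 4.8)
The plan is to reproduce the architecture of the proof of Theorem~\ref{th2} in the $Q_2$-driven setting, reducing every assertion to the small-dispersion asymptotics of the exact Gaussian quasi-likelihood $V_{n,\epsilon}^{(2)}$ of the Ornstein--Uhlenbeck coordinate process \eqref{small_dp_ver2}, as developed in Appendix I. The structural feature I would exploit throughout is that the diffusion coefficient $\mu_{1,1}^{-\alpha/2}$ in \eqref{small_dp_ver2} does not involve the drift $\lambda_{1,1}$, in contrast to the $Q_1$ case where it equals $\lambda_{1,1}^{-\alpha/2}$. Consequently the drift parameter $\lambda_{1,1}$ carries only the path-sensitivity (order $\epsilon^{-1}$) information $G_2(\lambda_{1,1}^*,\mu_{1,1})$, while, when $\mu_0$ is unknown, the diffusion parameter $\mu_{1,1}$ carries only the quadratic-variation (order $\sqrt n$) information $H_2(\mu_{1,1}^*)$. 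This is why the [B1]/[B2] dichotomy of Theorem~\ref{th2} is absent and why the limiting information is the block-diagonal matrix $I_2=\mathrm{diag}\{G_2,H_2\}$, the two components being asymptotically orthogonal because they live at different rates.

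First I would analyse the idealized estimators obtained by minimizing $V_{n,\epsilon}^{(2)}$ at the true coordinate process $x_{1,1}^{Q_2}$ rather than its approximation $\tilde x_{1,1}^{Q_2}$: in the known-$\mu_0$ case the single estimator of $\lambda_{1,1}$ from $V_{n,\epsilon}^{(2)}(\cdot,\mu_{1,1}|x_{1,1}^{Q_2})$, and in the unknown-$\mu_0$ case the pair estimating $(\lambda_{1,1},\mu_{1,1})$. Consistency follows by showing that the suitably normalized contrast converges to a deterministic limit with unique minimizer at the true value, via the small-noise law of large numbers for the Gaussian OU increments (the drift being identified from the $\epsilon\to0$ limit path $\ee^{-\lambda t}x_{1,1}^{Q_2}(0)$ and the diffusion parameter from the empirical quadratic variation). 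For asymptotic normality I would Taylor-expand the score about $(\lambda_{1,1}^*,\mu_{1,1}^*)$, normalize the $\lambda$-component by $\epsilon^{-1}$ and the $\mu$-component by $\sqrt n$, and apply a central limit theorem to the leading stochastic terms, checking that the cross term vanishes; this yields the covariances $G_2^{-1}$ and $H_2^{-1}$ and the diagonal limit $I_2^{-1}$.

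Next I would pass from the idealized estimators to the actual ones by controlling the error $\tilde x_{1,1}^{Q_2}(\widetilde t_i)-x_{1,1}^{Q_2}(\widetilde t_i)$, which has two sources: the substitution of $(\theta_1^*,\eta_1^*,\theta_2^*)$ in the exponential weights of \eqref{small_cp2_ver2} by the Theorem~\ref{th3} estimators $(\tilde\theta_1,\tilde\eta_1,\tilde\theta_2)$, and the replacement of the spatial integral by its Riemann sum over the full grid $\{(y_{j_1},z_{j_2})\}$. I would bound each error uniformly in $i$ and propagate it through the score and Hessian of $V_{n,\epsilon}^{(2)}$. The relevant combinations of [C1]--[C5] are exactly engineered so that these propagated errors are negligible against the estimation scale: the factors $mN^{2\gamma}$ in their denominators dominate the Theorem~\ref{th3} plug-in error, and the factors $(M_1\land M_2)^{2\tau_j}$ dominate the Riemann-sum error, with [C4] controlling the $\epsilon^{-1}$ (drift/$\theta_0$) scale and [C5] the additional $\sqrt n$ (diffusion/$\mu_0$) scale. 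Under these, $\tilde\lambda_{1,1}$ and $\tilde\mu_{1,1}$ inherit the asymptotics of the idealized estimators, and the smooth maps $\tilde\theta_0=-\tilde\lambda_{1,1}+(\tilde\theta_1^2+\tilde\eta_1^2)/(4\tilde\theta_2)+2\pi^2\tilde\theta_2$ and $\tilde\mu_0=\tilde\mu_{1,1}-2\pi^2$ then deliver the four assertions, with $\tilde\theta_0-\theta_0^*$ asymptotically equivalent to $-(\tilde\lambda_{1,1}-\lambda_{1,1}^*)$ at rate $\epsilon^{-1}$ and $\tilde\mu_0-\mu_0^*$ to $\tilde\mu_{1,1}-\mu_{1,1}^*$ at rate $\sqrt n$.

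The hardest part will be the third step: the simultaneous control of the coordinate-process approximation error at the two distinct rates $\epsilon^{-1}$ and $\sqrt n$. In the unknown-$\mu_0$ case one and the same perturbation of $\tilde x_{1,1}^{Q_2}$ enters both the drift score (normalized by $\epsilon^{-1}$) and the diffusion score (normalized by $\sqrt n$), and must be shown negligible against each; keeping track of how the indices $n,\epsilon,m,N$ and $M_1\land M_2$ interact so that [C4] and [C5] together suffice --- and in particular verifying that the Riemann-sum error does not bias the quadratic-variation-based estimate of $\mu_{1,1}$ --- is the technical crux.
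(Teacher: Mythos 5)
Your proposal follows essentially the same route as the paper: the idealized estimators based on the true coordinate process $x_{1,1}^{Q_2}$ are analysed exactly as in Appendix I (Theorem \ref{ap-th2}), with the block-diagonal limit $I_2=\mathrm{diag}\{G_2,H_2\}$ arising from the martingale structure of the exact OU likelihood and the vanishing cross term, and the passage to $\tilde x_{1,1}^{Q_2}$ is done by bounding the plug-in and spatial-discretization errors (the paper's $\tilde{\mathcal A}_n,\tilde{\mathcal B}_n$ and the quantities $\mathcal U_j,\mathcal V_j$) and propagating them through the contrast, score and Hessian under [C1], [C3]--[C5], before finishing with Theorem \ref{th3} and the smooth maps defining $\tilde\theta_0$ and $\tilde\mu_0$. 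Your identification of the two-rate ($\epsilon^{-1}$ versus $\sqrt n$) error control as the technical crux, and of the absence of the [B1]/[B2] dichotomy as a consequence of $\lambda_{1,1}$ not entering the diffusion coefficient, matches the paper's argument.
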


\begin{rmk}{}
Let $\epsilon=1/n^\upsilon$, $\upsilon>0$.
The regular conditions required in Theorem \ref{th4} 
to evaluate the approximation error of the coordinate process can be simplified as follows.
\begin{enumerate}
\item[(1)] 
[C1] and [C4] are satisfied under any one of the following conditions. 
\begin{enumerate}
\item[(i)]
$\upsilon>1/2$ and [C1],

\item[(ii)]
$\upsilon\le1/2$ and [C4].
\end{enumerate}

\item[(2)]
[C3] and [C4] are satisfied under any one of the following conditions. 
\begin{enumerate}
\item[(i)]
$\upsilon>1/4$ and [C3],

\item[(ii)]
$\upsilon\le1/4$ and [C4].
\end{enumerate}

\item[(3)]
[C4] and [C5] are satisfied under any one of the following conditions. 
\begin{enumerate}
\item[(i)]
$\upsilon>1/2$ and [C5],

\item[(ii)]
$\upsilon\le1/2$ and [C4].

\end{enumerate}

\item[(4)]
[C3]-[C5] are satisfied under any one of the following conditions. 
\begin{enumerate}
\item[(i)]
$\upsilon>1$ and [C5],

\item[(ii)]
$1/4<\upsilon\le1$ and [C3],

\item[(iii)]
$\upsilon\le1/4$ and [C4].

\end{enumerate}
\end{enumerate}
The assertion of (1) can be obtained from the fact 
that [C1] and [C4] are the conditions where 
$r_{n,\epsilon}$ in Lemma \ref{lem3} is 
$r_{n,\epsilon}=n$ and $r_{n,\epsilon}=(n\epsilon)^2=n^{2-2\upsilon}$, respectively, 
and that when $\upsilon>1/2$ (resp. $\upsilon\le1/2$), 
it follows from $n \lor (n\epsilon)^2=n$ (resp. $(n\epsilon)^2$)
that if [C1] (resp. [C4]) is satisfied, then [C4] (resp. [C1]) holds.
Similarly, (2)-(4) can be obtained.
\end{rmk}

\begin{rmk}[]
(i) Theorem \ref{th4} (1) is an extension 
of the result of Kaino and Uchida \cite{Kaino_Uchida2021} with respect to $\theta_0$
to SPDEs in two space dimensions. 
Indeed, when $\mu_0$ is known, the $Q_2$-Wiener process is a driving noise 
with a known damping factor, which corresponds to a cylindrical Brownian motion 
in the sense that the driving noise is known. 
In other words, by using the $Q_2$-Wiener process with $\mu_0=0$, 
for example, the same result as \cite{Kaino_Uchida2021} can be obtained 
in SPDEs in two space dimensions.

(ii) Theorem \ref{th4} (2) does not require the balance condition 
between $n$ and $\epsilon$ as [B] in Gloter and S{\o}rensen \cite{Gloter_Sorensen2009}. 
This is because, as mentioned in Remark \ref{rmk2}, 
the solution of our diffusion process model 
can be explicitly expressed 
and it does not require a condition to evaluate the approximation error.
See Appendix for details.

\end{rmk}


\section{Proofs}\label{sec5}
We set the following notation.
\begin{enumerate}
\item[1.]
Let $\bs k=(k_1,k_2)\in \mathbb N^2$.

\item[2.]
For $A,B\ge0$, we write $A \lesssim B$ if $A\le CB$ for some constant $C>0$.

\item[3.]
For $x=(x_1,\ldots,x_d)\in\mathbb R^d$ and $f:\mathbb R^d\to\mathbb R$, 
we write $\partial_{x_1} f(x)=\frac{\partial}{\partial x_1}f(x)$,
$\partial f(x)=(\partial_{x_1}f(x),\ldots,\partial_{x_d}f(x))$
and $\partial^2 f(x)=(\partial_{x_j}\partial_{x_i} f(x))_{i,j=1}^d$.

\end{enumerate}

\subsection{Proofs of Proposition \ref{prop1}, Theorems \ref{th1} and \ref{th2}}
\label{sec5.1}
In this subsection, we provide proofs of our assertions 
in Subsection \ref{sec3.1}.
Let
\begin{align*}
A_{i,\bs k}
&=
-\langle \xi,e_{\bs k}\rangle_\theta
(1-\ee^{-\lambda_{\bs k}\Delta_N})\ee^{-\lambda_{\bs k}(i-1)\Delta_N},
\\
B_{1,i,\bs k}^{Q_1}
&=
-\frac{\epsilon(1-\ee^{-\lambda_{\bs k}\Delta_N})}{\lambda_{\bs k}^{\alpha/2}}
\int_0^{(i-1)\Delta_N}\ee^{-\lambda_{\bs k}((i-1)\Delta_N-s)}\dd w_{\bs k}(s),
\\
B_{2,i,\bs k}^{Q_1}
&=
\frac{\epsilon}{\lambda_{\bs k}^{\alpha/2}}
\int_{(i-1)\Delta_N}^{i\Delta_N}
\ee^{-\lambda_{\bs k}(i\Delta_N-s)}\dd w_{\bs k}(s),
\end{align*}
and $B_{i,\bs k}^{Q_1} =B_{1,i,\bs k}^{Q_1}+B_{2,i,\bs k}^{Q_1}$.
The increment $\Delta_i x_{\bs k}^{Q_1}$ can be expressed as 
$\Delta_i x_{\bs k}^{Q_1}=A_{i,\bs k}+B_{i,\bs k}^{Q_1} $.

By strengthening the assumption from Assumption 1 
in Tonaki et al. \cite{TKU2022arXiv} to [A1], 
the following lemma on $A_{i,\bs k}$ holds instead of the Lemma 5.3 
in \cite{TKU2022arXiv}.
\begin{lem}\label{lem1}
Under [A1], it holds that uniformly in $(y,z)\in D$,
\begin{align*}
&\sum_{i=1}^N\sum_{\bs k_1,\bs k_2\in\mathbb N^2}
\bigl|A_{i,\bs k_1}A_{i,\bs k_2}e_{\bs k_1}(y,z)e_{\bs k_2}(y,z)\bigr|
=O(\Delta_N^{\beta}), \quad \beta\in(0,1),
\\
&\sup_{j\ge1}\sum_{i=1}^N\sum_{\bs k_1,\bs k_2\in\mathbb N^2}
\bigl|A_{i,\bs k_1}A_{j,\bs k_2}e_{\bs k_1}(y,z)e_{\bs k_2}(y,z)\bigr|
=O(\Delta_N^{1/2}).
\end{align*}
\end{lem}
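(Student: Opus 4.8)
The plan is to reduce both estimates to one–dimensional sums in $\bs k$ of the quantities $|A_{j,\bs k}|$ and then exploit three ingredients: the elementary inequalities $1-\ee^{-x}\le x^\beta$ (valid for $x\ge0$, $\beta\in(0,1]$) and $(1-\ee^{-x})^2\le\min(x^2,1)$; the spectral reformulation of [A1], namely $\|A_\theta\xi\|_\theta^2=\sum_{\bs k\in\mathbb N^2}\lambda_{\bs k}^2\langle\xi,e_{\bs k}\rangle_\theta^2<\infty$, where I use that $\{e_{\bs k}\}$ is orthonormal in $H_\theta$; and the convergence of the two–dimensional spectral sum $\sum_{\bs k}\lambda_{\bs k}^{-s}<\infty$ for $s>1$, which holds because $\lambda_{\bs k}\gtrsim k_1^2+k_2^2$. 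First I would record that $|e_{\bs k}(y,z)|\le C$ uniformly in $\bs k$ and $(y,z)\in D$, since $|\sin|\le1$ and the exponential weights are bounded on $D$ for $\theta$ in the compact set; writing $a_{\bs k}=|\langle\xi,e_{\bs k}\rangle_\theta|$ and $S_j=\sum_{\bs k}|A_{j,\bs k}|$, this bounds the first double sum by $C^2\sum_{i=1}^N S_i^2$ and the second by $C^2(\sup_{j\ge1}S_j)\sum_{i=1}^N S_i$.

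For the first estimate I would apply Cauchy–Schwarz to $S_i=\sum_{\bs k}(a_{\bs k}\lambda_{\bs k})\cdot\lambda_{\bs k}^{-1}(1-\ee^{-\lambda_{\bs k}\Delta_N})\ee^{-\lambda_{\bs k}(i-1)\Delta_N}$, obtaining $S_i^2\le\|A_\theta\xi\|_\theta^2\sum_{\bs k}\lambda_{\bs k}^{-2}(1-\ee^{-\lambda_{\bs k}\Delta_N})^2\ee^{-2\lambda_{\bs k}(i-1)\Delta_N}$. Summing the geometric series $\sum_{i=1}^N\ee^{-2\lambda_{\bs k}(i-1)\Delta_N}\le(1-\ee^{-2\lambda_{\bs k}\Delta_N})^{-1}$ and using the identity $(1-\ee^{-x})^2/(1-\ee^{-2x})=(1-\ee^{-x})/(1+\ee^{-x})\le1-\ee^{-x}\le x^\beta$ collapses the two factors and leaves $\sum_{i=1}^N S_i^2\lesssim\Delta_N^\beta\sum_{\bs k}\lambda_{\bs k}^{\beta-2}$. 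The last sum converges for every $\beta\in(0,1)$, which gives the claimed $O(\Delta_N^\beta)$.

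For the second estimate the factor $\sum_{i=1}^N S_i$ is harmless: interchanging sums and using $\sum_{i=1}^N\ee^{-\lambda_{\bs k}(i-1)\Delta_N}\le(1-\ee^{-\lambda_{\bs k}\Delta_N})^{-1}$ telescopes the $(1-\ee^{-\lambda_{\bs k}\Delta_N})$ factor, leaving $\sum_{i=1}^N S_i\le\sum_{\bs k}a_{\bs k}\le\|A_\theta\xi\|_\theta(\sum_{\bs k}\lambda_{\bs k}^{-2})^{1/2}=O(1)$. The real work is the factor $\sup_{j\ge1}S_j$, which is attained at $j=1$; by Cauchy–Schwarz $S_1^2\le\|A_\theta\xi\|_\theta^2\sum_{\bs k}\lambda_{\bs k}^{-2}(1-\ee^{-\lambda_{\bs k}\Delta_N})^2$, and I would bound $(1-\ee^{-\lambda_{\bs k}\Delta_N})^2\le\min((\lambda_{\bs k}\Delta_N)^2,1)$ and split the sum at $\lambda_{\bs k}=\Delta_N^{-1}$. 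On $\{\lambda_{\bs k}\le\Delta_N^{-1}\}$ each term equals $\Delta_N^2$ and the number of such $\bs k$ is $O(\Delta_N^{-1})$ because $\lambda_{\bs k}\gtrsim k_1^2+k_2^2$; on $\{\lambda_{\bs k}>\Delta_N^{-1}\}$ the tail $\sum\lambda_{\bs k}^{-2}$ is $O(\Delta_N)$ by integral comparison. Both pieces are $O(\Delta_N)$, so $S_1\lesssim\Delta_N^{1/2}$ and the product is $O(\Delta_N^{1/2})$.

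The main obstacle is precisely this last counting/tail split: obtaining the sharp exponent $1/2$ (rather than $1/2-\varepsilon$) forces one to count the eigenvalues below the threshold $\Delta_N^{-1}$ and to estimate the high-frequency tail, both of which rely essentially on the two–dimensional growth $\lambda_{\bs k}\sim\pi^2\theta_2(k_1^2+k_2^2)$; a cruder bound such as $(1-\ee^{-x})^2\le x^{2\gamma}$ only yields $O(\Delta_N^\gamma)$ with $\gamma<1/2$. It is also worth checking at the outset that $\lambda_{\bs k}\ge\lambda_{1,1}>0$ for all $\bs k$, so that every exponential factor decays and the geometric series bounds above are legitimate.
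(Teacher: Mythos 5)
Your proof is correct and follows essentially the same route as the paper's: the same reduction to products of one-dimensional sums in $\bs k$ via the uniform bound on $e_{\bs k}$, the same Cauchy--Schwarz step against $\|A_\theta\xi\|_\theta^2=\sum_{\bs k}\lambda_{\bs k}^2\langle\xi,e_{\bs k}\rangle_\theta^2$, the same geometric-series summation in $i$ with the reduction $(1-\ee^{-x})^2/(1-\ee^{-2x})\le 1-\ee^{-x}\le x^\beta$, and the same factorization $\bigl(\sup_{j}S_j\bigr)\bigl(\sum_i S_i\bigr)$ for the second bound (taking $\ee^{-\lambda_{\bs k}(j-1)\Delta_N}\le 1$ in the paper is the same as your observation that the supremum is attained at $j=1$). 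The only difference is how the key estimate $\sum_{\bs k}\lambda_{\bs k}^{-2}(1-\ee^{-\lambda_{\bs k}\Delta_N})^2=O(\Delta_N)$ is obtained: you prove it self-containedly by splitting at $\lambda_{\bs k}=\Delta_N^{-1}$ and counting eigenvalues, whereas the paper factors the summand and invokes (5.31) of Tonaki et al., which encodes the identical two-dimensional counting argument.
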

\begin{proof}
Note that the boundedness of $(1-\ee^{-x})/x^\tau$, $\tau\in(0,1)$ 
on $x\in(0,\infty)$
and 
\begin{equation*}
\Delta_N\sum_{\bs k\in\mathbb N^2}
\frac{1-\ee^{-\lambda_{\bs k}\Delta_N}}{(\lambda_{\bs k}\Delta_N)^{1+\tau}}
=O(1),\quad \tau\in(0,1),
\end{equation*}
which is obtained by (5.31) in Tonaki et al. \cite{TKU2022arXiv}.
Let $\beta\in(0,1)$.
It holds from $2-\beta=1+(1-\beta)$ and $1-\beta\in(0,1)$ that under [A1],
\begin{align*}
\sum_{\bs k\in\mathbb N^2}
\frac{1-\ee^{-\lambda_{\bs k}\Delta_N}}{\lambda_{\bs k}^2}
&=
\Delta_N^\beta\sum_{\bs k\in\mathbb N^2}
\frac{1}{\lambda_{\bs k}^{2-\beta}}\cdot
\frac{1-\ee^{-\lambda_{\bs k}\Delta_N}}{(\lambda_{\bs k}\Delta_N)^\beta}
=O(\Delta_N^\beta),
\\
\sum_{\bs k\in\mathbb N^2}
\frac{(1-\ee^{-\lambda_{\bs k}\Delta_N})^2}{\lambda_{\bs k}^2}
&=\Delta_N^2\sum_{\bs k\in\mathbb N^2}
\frac{1-\ee^{-\lambda_{\bs k}\Delta_N}}{(\lambda_{\bs k}\Delta_N)^{2-\beta}}
\cdot
\frac{1-\ee^{-\lambda_{\bs k}\Delta_N}}{(\lambda_{\bs k}\Delta_N)^\beta}
=O(\Delta_N).
\end{align*}
It also follows that 
$\sum_{\bs k\in\mathbb N^2}\lambda_{\bs k}^2
\langle \xi,e_{\bs k} \rangle_\theta^2<\infty$ 
under [A1]. Hence, we obtain by using the Schwarz inequality that 
\begin{align*}
&\sum_{i=1}^N\sum_{\bs k_1,\bs k_2\in\mathbb N^2}
\bigl|A_{i,\bs k_1}A_{i,\bs k_2}e_{\bs k_1}(y,z)e_{\bs k_2}(y,z)\bigr|
\\
&\lesssim 
\sum_{i=1}^N\sum_{\bs k_1,\bs k_2\in\mathbb N^2}
(1-\ee^{-\lambda_{\bs k_1}\Delta_N})(1-\ee^{-\lambda_{\bs k_2}\Delta_N})
\\
&\qquad\qquad\times
\ee^{-\lambda_{\bs k_1}(i-1)\Delta_N }
\ee^{-\lambda_{\bs k_2}(i-1)\Delta_N }
|\langle\xi,e_{\bs k_1}\rangle_\theta \langle\xi,e_{\bs k_2}\rangle_\theta|
\\
&=
\sum_{i=1}^N
\Biggl(
\sum_{\bs k\in\mathbb N^2}
(1-\ee^{-\lambda_{\bs k}\Delta_N})
\ee^{-\lambda_{\bs k}(i-1)\Delta_N }
|\langle\xi,e_{\bs k}\rangle_\theta|
\Biggr)^2
\\
&\le
\Biggl(
\sum_{\bs k\in\mathbb N^2}
\lambda_{\bs k}^2
\langle\xi,e_{\bs k}\rangle_\theta^2
\Biggr)
\Biggl(
\sum_{\bs k\in\mathbb N^2}
\frac{(1-\ee^{-\lambda_{\bs k}\Delta_N})^2}{\lambda_{\bs k}^2}
\sum_{i=1}^N
\ee^{-2\lambda_{\bs k}(i-1)\Delta_N }
\Biggr)
\\
&\lesssim
\sum_{\bs k\in\mathbb N^2}
\frac{1-\ee^{-\lambda_{\bs k}\Delta_N}}{\lambda_{\bs k}^2}
\\
&=O(\Delta_N^{\beta})
\end{align*}
and
\begin{align*}
&\sup_{j\ge1}\sum_{i=1}^N\sum_{\bs k_1,\bs k_2\in\mathbb N^2}
\bigl|A_{i,\bs k_1}A_{j,\bs k_2}e_{\bs k_1}(y,z)e_{\bs k_2}(y,z)\bigr|
\\
&\lesssim 
\sup_{j\ge1}\sum_{i=1}^N\sum_{\bs k_1,\bs k_2\in\mathbb N^2}
(1-\ee^{-\lambda_{\bs k_1}\Delta_N})(1-\ee^{-\lambda_{\bs k_2}\Delta_N})
\\
&\qquad\qquad\times
\ee^{-\lambda_{\bs k_1}(i-1)\Delta_N }
\ee^{-\lambda_{\bs k_2}(j-1)\Delta_N }
|\langle\xi,e_{\bs k_1}\rangle_\theta \langle\xi,e_{\bs k_2}\rangle_\theta|
\\
&\le 
\sum_{\bs k_1,\bs k_2\in\mathbb N^2}
(1-\ee^{-\lambda_{\bs k_2}\Delta_N})
|\langle\xi,e_{\bs k_1}\rangle_\theta \langle\xi,e_{\bs k_2}\rangle_\theta|
\\
&=
\Biggl(
\sum_{\bs k\in\mathbb N^2}
|\langle\xi,e_{\bs k}\rangle_\theta|
\Biggr)
\Biggl(
\sum_{\bs k\in\mathbb N^2}
(1-\ee^{-\lambda_{\bs k}\Delta_N})
|\langle\xi,e_{\bs k}\rangle_\theta|
\Biggr)
\\
&\le
\Biggl(
\sum_{\bs k\in\mathbb N^2}
\frac{1}{\lambda_{\bs k}^2}
\Biggr)^{1/2}
\Biggl(
\sum_{\bs k\in\mathbb N^2}
\frac{(1-\ee^{-\lambda_{\bs k}\Delta_N})^2}{\lambda_{\bs k}^2}
\Biggr)^{1/2}
\Biggl(
\sum_{\bs k\in\mathbb N^2}
\lambda_{\bs k}^2
\langle\xi,e_{\bs k}\rangle_\theta^2
\Biggr)
\\
&=O(\Delta_N^{1/2}).
\end{align*}
\end{proof}

For $B_{i,\bs k}^{Q_1}$,
the following lemma holds in the same way as Lemma 5.4 
in Tonaki et al. \cite{TKU2022arXiv}. 
\begin{lem}\label{lem2}
It holds that uniformly in $(y,z)\in D_\delta$,
\begin{equation*}
\sum_{\bs k,\bs \ell\in\mathbb N^2}
\EE[B_{i,\bs k}^{Q_1} B_{i,\bs \ell}^{Q_1}]e_{\bs k}(y,z)e_{\bs \ell}(y,z)
=\epsilon^2\biggl\{
\Delta_N^{\alpha}\frac{\Gamma(1-\alpha)}{4\pi\alpha\theta_2}
\exp\biggl(
-\frac{\theta_1}{\theta_2}y
\biggr)
\exp\biggl(
-\frac{\eta_1}{\theta_2}z
\biggr)
+r_{N,i}+O(\Delta_N)\biggr\},
\end{equation*}
where $\sum_{i=1}^N |r_{N,i}|=O(\Delta_N^{\beta})$, $\beta\in(0,1)$.
Moreover, it holds that uniformly in $(y,z)\in D_\delta$,
\begin{equation*}
\sup_{i\neq j}\sum_{j=1}^N
\sum_{\bs k,\bs \ell\in\mathbb N^2}
\Bigl|
\EE[B_{i,\bs k}^{Q_1} B_{j,\bs \ell}^{Q_1}]e_{\bs k}(y,z)e_{\bs \ell}(y,z)
\Bigr|
=O(\epsilon^2).
\end{equation*}
\end{lem}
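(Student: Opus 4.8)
The plan is to use the independence of the driving Brownian motions to collapse the double sums onto their diagonals, and then to compare a single lattice sum over $\bs k$ with a Gamma integral. First I would observe that each $B_{i,\bs k}^{Q_1}$ is a mean-zero stochastic integral against $w_{\bs k}$, and that $w_{\bs k}$ for distinct $\bs k$ are independent; hence $\EE[B_{i,\bs k}^{Q_1}B_{j,\bs\ell}^{Q_1}]=0$ whenever $\bs k\neq\bs\ell$, which reduces both displays to single sums over $\bs k$ against $e_{\bs k}(y,z)^2$. Writing $B_{i,\bs k}^{Q_1}=M_{\bs k}(i\Delta_N)-M_{\bs k}((i-1)\Delta_N)$ for the increment of $M_{\bs k}(t)=\epsilon\lambda_{\bs k}^{-\alpha/2}\int_0^t\ee^{-\lambda_{\bs k}(t-s)}\dd w_{\bs k}(s)$ and using the It\^o isometry in the form $\EE[M_{\bs k}(t)M_{\bs k}(s)]=\frac{\epsilon^2}{2\lambda_{\bs k}^{1+\alpha}}(\ee^{-\lambda_{\bs k}|t-s|}-\ee^{-\lambda_{\bs k}(t+s)})$, a direct computation gives, with $a_{\bs k}=\ee^{-\lambda_{\bs k}\Delta_N}$,
\[
\EE\bigl[(B_{i,\bs k}^{Q_1})^2\bigr]
=\frac{\epsilon^2}{2\lambda_{\bs k}^{1+\alpha}}
\bigl\{2(1-a_{\bs k})-(1-a_{\bs k})^2 a_{\bs k}^{2(i-1)}\bigr\},
\]
and, for $j<i$,
\[
\EE\bigl[B_{i,\bs k}^{Q_1}B_{j,\bs k}^{Q_1}\bigr]
=-\frac{\epsilon^2(1-a_{\bs k})^2}{2\lambda_{\bs k}^{1+\alpha}}
\bigl(a_{\bs k}^{i-j-1}+a_{\bs k}^{i+j-2}\bigr).
\]

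For the first assertion I would split the first formula into the $i$-independent main part $\epsilon^2(1-a_{\bs k})/\lambda_{\bs k}^{1+\alpha}$ and the decaying correction carrying $a_{\bs k}^{2(i-1)}$. Using $e_{\bs k}(y,z)^2=4\sin^2(\pi ky)\sin^2(\pi\ell z)\ee^{-\frac{\theta_1}{\theta_2}y}\ee^{-\frac{\eta_1}{\theta_2}z}$ and expanding $4\sin^2(\pi ky)\sin^2(\pi\ell z)=1-\cos(2\pi ky)-\cos(2\pi\ell z)+\cos(2\pi ky)\cos(2\pi\ell z)$, the constant term yields $\ee^{-\frac{\theta_1}{\theta_2}y}\ee^{-\frac{\eta_1}{\theta_2}z}\sum_{\bs k}\epsilon^2(1-a_{\bs k})/\lambda_{\bs k}^{1+\alpha}$. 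Approximating this lattice sum by $\frac{1}{4\pi\theta_2}\int_0^\infty\frac{1-\ee^{-u\Delta_N}}{u^{1+\alpha}}\dd u$ through the radial substitution $u=\pi^2\theta_2 r^2$ and invoking $\int_0^\infty\frac{1-\ee^{-v}}{v^{1+\alpha}}\dd v=\Gamma(1-\alpha)/\alpha$ produces exactly $\Delta_N^\alpha\Gamma(1-\alpha)/(4\pi\alpha\theta_2)$; the sum-to-integral defect, the shift $\lambda_{\bs k}=\pi^2\theta_2(k^2+\ell^2)+O(1)$, and the oscillatory cosine sums contribute only $O(\Delta_N)$. The decaying correction defines $r_{N,i}$, and summing over $i$ with $\sum_{i=1}^N a_{\bs k}^{2(i-1)}\le(1-a_{\bs k}^2)^{-1}\lesssim(\lambda_{\bs k}\Delta_N)^{-1}$ reduces $\sum_i|r_{N,i}|$ to $\Delta_N^{-1}\sum_{\bs k}(1-a_{\bs k})^2/\lambda_{\bs k}^{2+\alpha}$, which the same integral comparison bounds by $\Delta_N^{-1}\cdot O(\Delta_N^{1+\alpha})=O(\Delta_N^\alpha)$, giving the stated $O(\Delta_N^\beta)$.

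For the second assertion, both exponentials in the second formula decay in $|i-j|$, so for fixed $i$ the geometric bounds $\sum_{j\neq i}a_{\bs k}^{|i-j|-1}\lesssim(1-a_{\bs k})^{-1}$ and $\sum_{j\neq i}a_{\bs k}^{i+j-2}\lesssim(1-a_{\bs k})^{-1}$ give $\sum_{j\neq i}|\EE[B_{i,\bs k}^{Q_1}B_{j,\bs k}^{Q_1}]|\lesssim\epsilon^2(1-a_{\bs k})/\lambda_{\bs k}^{1+\alpha}$, uniformly in $i$. Since $e_{\bs k}(y,z)^2\le4\ee^{-\frac{\theta_1}{\theta_2}y}\ee^{-\frac{\eta_1}{\theta_2}z}$ is bounded on $D_\delta$, summing over $\bs k$ and reusing the integral estimate bounds the whole expression by $O(\epsilon^2\Delta_N^\alpha)=O(\epsilon^2)$, uniformly in $(y,z)\in D_\delta$.

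The main obstacle is the uniform sum-to-integral comparison in the first assertion: controlling, at order $O(\Delta_N)$ and uniformly in $(y,z)\in D_\delta$, both the lattice defect for $\sum_{\bs k}(1-a_{\bs k})/\lambda_{\bs k}^{1+\alpha}$ and the oscillatory cosine sums, for which the separation of $(y,z)\in D_\delta$ from $\partial D$ is essential. This is precisely the estimate performed for Lemma 5.4 in Tonaki et al. \cite{TKU2022arXiv}, and that argument carries over here.
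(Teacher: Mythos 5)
Your proposal is correct in substance and follows essentially the same route the paper relies on: the paper gives no self-contained proof of this lemma, deferring entirely to Lemma 5.4 of Tonaki et al. \cite{TKU2022arXiv}, and your argument reconstructs precisely that computation — diagonal reduction via independence of the $w_{\bs k}$, the explicit Ornstein--Uhlenbeck increment covariances, the radial sum-to-integral comparison producing $\Gamma(1-\alpha)/\alpha$, and geometric-series bounds for the terms coupling different time indices — while deferring the same genuinely hard step (uniform control on $D_\delta$ of the lattice defect, the eigenvalue shift, and the oscillatory cosine sums) to the same cited lemma. Your formulas for $\EE[(B_{i,\bs k}^{Q_1})^2]$ and $\EE[B_{i,\bs k}^{Q_1}B_{j,\bs k}^{Q_1}]$ check out, and your treatment of the second assertion is clean; your bound $O(\epsilon^2\Delta_N^\alpha)$ is in fact stronger than the stated $O(\epsilon^2)$.

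Two small points. First, writing $a_{\bs k}=\ee^{-\lambda_{\bs k}\Delta_N}$, your intermediate inequality $\sum_{i=1}^N a_{\bs k}^{2(i-1)}\le(1-a_{\bs k}^2)^{-1}\lesssim(\lambda_{\bs k}\Delta_N)^{-1}$ is false when $\lambda_{\bs k}\Delta_N\gtrsim1$, since $(1-\ee^{-x})^{-1}\ge x^{-1}$ rather than $\lesssim x^{-1}$. The clean repair is $(1-a_{\bs k})^2(1-a_{\bs k}^2)^{-1}=(1-a_{\bs k})/(1+a_{\bs k})\le 1-a_{\bs k}$, which yields $\sum_{i=1}^N|r_{N,i}|\lesssim\sum_{\bs k}(1-a_{\bs k})\lambda_{\bs k}^{-1-\alpha}=O(\Delta_N^\alpha)$ in one step; your two-piece bound happens to be of the same order $\Delta_N^\alpha$ on both ranges of $\lambda_{\bs k}\Delta_N$, so your conclusion is unaffected, but the displayed inequality should not stand as written. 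Second, your argument establishes the remainder bound with $\beta=\alpha$, and this is the best possible: the $i=1$ correction $\sum_{\bs k}(1-a_{\bs k})^2\lambda_{\bs k}^{-1-\alpha}e_{\bs k}(y,z)^2$ is already of exact order $\Delta_N^\alpha$ on $D_\delta$, so the lemma's ``$\beta\in(0,1)$'' must be read as ``for some $\beta$''. That is all the paper needs: in Proposition \ref{prop1} this remainder contributes $O(\Delta_N^\alpha)/(N\Delta_N^\alpha)=O(\Delta_N)$, which is dominated by the $O(\Delta_N^{1-\alpha})$ term, and the $\beta$ appearing in the final error there comes from Lemma \ref{lem1}, not from this lemma.
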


In view of the proof of Lemma 5.4 in Tonaki et al. \cite{TKU2022arXiv},
it holds by choosing $\beta\in[\alpha,1)$ that
\begin{equation}\label{eq-0001}
\sup_{i=1,\ldots,N}\sum_{\bs k,\bs \ell\in\mathbb N^2}
\Bigl|
\EE[B_{i,\bs k}^{Q_1} B_{i,\bs \ell}^{Q_1}]e_{\bs k}(y,z)e_{\bs \ell}(y,z)
\Bigr|
=O(\epsilon^2\Delta_N^{\alpha})
\end{equation}
uniformly in $(y,z)\in D$.

\begin{proof}[\bf{Proof of Proposition \ref{prop1}}]
Noting that
\begin{equation*}
\EE[(\Delta_i X^{Q_1})^2(y,z)]
=\sum_{\bs k,\bs \ell\in\mathbb N^2} 
A_{i,\bs k}A_{i,\bs \ell}e_{\bs k}(y,z)e_{\bs \ell}(y,z)
+\sum_{\bs k,\bs \ell\in\mathbb N^2} 
\EE[B_{i,\bs k}^{Q_1}B_{i,\bs \ell}^{Q_1}]e_{\bs k}(y,z)e_{\bs \ell}(y,z),
\end{equation*}
we obtain the desired result
from Lemmas \ref{lem1} and \ref{lem2}. 
\end{proof}

\begin{proof}[\bf Proof of Theorem \ref{th1}]
Let $\nu=(\theta_1,\eta_1,\theta_2)$, 
$g_\nu(y,z)=\frac{\Gamma(1-\alpha)}{4\pi\alpha \theta_2}
\exp(-\frac{\theta_1}{\theta_2}y)\exp(-\frac{\eta_1}{\theta_2}z)$, 
\begin{equation*}
\xi_{N,\epsilon}(y,z,\nu)=\epsilon^{-2}Z_N^{Q_1}(y,z)-g_\nu(y,z)
\end{equation*}
and $\xi_{N,\epsilon}^*(y,z)=\xi_{N,\epsilon}(y,z,\nu^*)$.
Note that under [A1], the followings hold instead of (5.46) and (5.47) in 
Tonaki et al. \cite{TKU2022arXiv}.
\begin{equation}\label{eq-0002}
\sup_{(y,z)\in D_\delta}\EE[\epsilon^{-2}Z_N^{Q_1}(y,z)]
\lesssim 1,
\end{equation}
\begin{equation}\label{eq-0003}
\sup_{(y,z)\in D_\delta}\EE[\xi_{N,\epsilon}^*(y,z)^2]
=O(R_{N,\epsilon}^{-1}).
\end{equation}
\eqref{eq-0002} is shown by Proposition \ref{prop1}.

\textit{Proof of \eqref{eq-0003}. }
It follows that
\begin{align*}
\sup_{(y,z)\in D_\delta}\EE[\xi_{N,\epsilon}^*(y,z)^2]
\le
\sup_{(y,z)\in D_\delta}\VV[\epsilon^{-2}Z_N^{Q_1}(y,z)]
+\sup_{(y,z)\in D_\delta}\EE[\xi_{N,\epsilon}^*(y,z)]^2
\end{align*}
and from \eqref{prop1-eq2} that 
$\sup_{(y,z)\in D_\delta}\EE[\xi_{N,\epsilon}^*(y,z)]^2
=O(\Delta_N^{2(1-\alpha)} \lor \epsilon^{-4}\Delta_N^{2(1-\alpha+\beta)})$.
It also follows from the Isserlis' theorem that
\begin{align*}
&\VV[\epsilon^{-2}Z_N^{Q_1}(y,z)]
\\
&=\epsilon^{-4}\bigl\{\EE[Z_N^{Q_1}(y,z)^2]-\EE[Z_N^{Q_1}(y,z)]^2\bigr\}
\\
&=\biggl(\frac{\epsilon^{-2}}{N\Delta_N^{\alpha}}\biggr)^2
\sum_{i,j=1}^N
\Bigl\{
\EE[(\Delta_i X^{Q_1})^2(y,z)(\Delta_j X^{Q_1})^2(y,z)]
\\
&\qquad\qquad
-\EE[(\Delta_i X^{Q_1})^2(y,z)]\EE[(\Delta_j X^{Q_1})^2(y,z)]
\Bigr\}
\\
&=\biggl(\frac{\epsilon^{-2}}{N\Delta_N^{\alpha}}\biggr)^2
\sum_{i,j=1}^N
\sum_{\bs k_1,\ldots,\bs k_4\in\mathbb N^2}
\Bigl\{
A_{i,\bs k_1}A_{i,\bs k_2}A_{j,\bs k_3}A_{j,\bs k_4}
+2A_{i,\bs k_1}A_{i,\bs k_2}\EE[B_{j,\bs k_3}^{Q_1} B_{j,\bs k_4}^{Q_1} ]
\nonumber
\\
&\qquad\qquad\qquad
+4A_{i,\bs k_1}A_{j,\bs k_2}\EE[B_{i,\bs k_3}^{Q_1} B_{j,\bs k_4}^{Q_1} ]
+\EE[B_{i,\bs k_1}^{Q_1} B_{i,\bs k_2}^{Q_1} B_{j,\bs k_3}^{Q_1} B_{j,\bs k_4}^{Q_1} ]
\nonumber
\\
&\qquad\qquad\qquad
-A_{i,\bs k_1}A_{i,\bs k_2}A_{j,\bs k_3}A_{j,\bs k_4}
-2A_{i,\bs k_1}A_{j,\bs k_2}\EE[B_{i,\bs k_3}^{Q_1} B_{j,\bs k_4}^{Q_1} ]
\nonumber
\\
&\qquad\qquad\qquad-
\EE[B_{i,\bs k_1}^{Q_1} B_{i,\bs k_2}^{Q_1}]
\EE[B_{j,\bs k_3}^{Q_1} B_{j,\bs k_4}^{Q_1}]
\Bigr\}
e_{\bs k_1}(y,z)\cdots e_{\bs k_4}(y,z)
\\
&\lesssim
\biggl(\frac{\epsilon^{-2}}{N\Delta_N^{\alpha}}\biggr)^2
\sum_{i,j=1}^N
\sum_{\bs k_1,\ldots,\bs k_4\in\mathbb N^2}
A_{i,\bs k_1}A_{i,\bs k_2}\EE[B_{j,\bs k_3}^{Q_1} B_{j,\bs k_4}^{Q_1} ]
e_{\bs k_1}(y,z)\cdots e_{\bs k_4}(y,z)
\nonumber
\\
&\qquad+
\biggl(\frac{\epsilon^{-2}}{N\Delta_N^{\alpha}}\biggr)^2
\sum_{i=1}^N
\sum_{\bs k_1,\ldots,\bs k_4\in\mathbb N^2}
\EE[B_{i,\bs k_1}^{Q_1} B_{i,\bs k_2}^{Q_1}]
\EE[B_{i,\bs k_3}^{Q_1} B_{i,\bs k_4}^{Q_1}]
e_{\bs k_1}(y,z)\cdots e_{\bs k_4}(y,z)
\nonumber
\\
&\qquad+
\biggl(\frac{\epsilon^{-2}}{N\Delta_N^{\alpha}}\biggr)^2
\sum_{i\neq j}
\sum_{\bs k_1,\ldots,\bs k_4\in\mathbb N^2}
\Bigl\{
A_{i,\bs k_1}A_{j,\bs k_2}\EE[B_{i,\bs k_3}^{Q_1} B_{j,\bs k_4}^{Q_1} ]
\nonumber
\\
&\qquad\qquad\qquad+
\EE[B_{i,\bs k_1}^{Q_1} B_{j,\bs k_2}^{Q_1}]
\EE[B_{i,\bs k_3}^{Q_1} B_{j,\bs k_4}^{Q_1}]
\Bigr\}
e_{\bs k_1}(y,z)\cdots e_{\bs k_4}(y,z)
\\
&=:S_1+S_2+S_3.
\end{align*}
Noting from Lemmas \ref{lem1}, \ref{lem2} and $\beta\in[\alpha,1)$ that 
\begin{align*}
S_1
&\le
\biggl(\frac{\epsilon^{-2}}{N\Delta_N^{\alpha}}\biggr)^2
\Biggl\{
\Biggl(
\sum_{i=1}^N
\sum_{\bs k_1,\bs k_2\in\mathbb N^2}
\bigl|
A_{i,\bs k_1}A_{i,\bs k_2}
e_{\bs k_1}(y,z)e_{\bs k_2}(y,z)
\bigr|
\Biggr)
\\
&\qquad\qquad\times
\Biggl(
\sum_{i=1}^N
\sum_{\bs k_1,\bs k_2\in\mathbb N^2}
\Bigl|
\EE[B_{i,\bs k_1}^{Q_1} B_{i,\bs k_2}^{Q_1} ]
e_{\bs k_1}(y,z)e_{\bs k_2}(y,z)
\Bigr|
\Biggr)
\Biggr\}
\\
&=O\Biggl(
\biggl(\frac{\epsilon^{-2}}{N\Delta_N^\alpha}\biggr)^2
\epsilon^2\Delta_N^{-1+\alpha+\beta}
\Biggr)
=O(\epsilon^{-2}\Delta_N^{1-\alpha+\beta}),
\\
S_2
&=
\biggl(\frac{\epsilon^{-2}}{N\Delta_N^{\alpha}}\biggr)^2
\sum_{i=1}^N
\Biggl(
\sum_{\bs k_1,\bs k_2\in\mathbb N^2}
\EE[B_{i,\bs k_1}^{Q_1} B_{i,\bs k_2}^{Q_1} ]
e_{\bs k_1}(y,z)e_{\bs k_2}(y,z)
\Biggr)^2
\\
&=O\Biggl(
\biggl(\frac{\epsilon^{-2}}{N\Delta_N^\alpha}\biggr)^2
\epsilon^4\Delta_N^{2\alpha-1}
\Biggr)
=O(\Delta_N),
\\
S_3
&\le 
\biggl(\frac{\epsilon^{-2}}{N\Delta_N^{\alpha}}\biggr)^2
\Biggl\{
\Biggl(
\sup_{j\ge1}\sum_{i=1}^N
\sum_{\bs k_1,\bs k_2\in\mathbb N^2}
\bigl|
A_{i,\bs k_1}A_{j,\bs k_2}
e_{\bs k_1}(y,z)e_{\bs k_2}(y,z)
\bigr|
\Biggr)
\\
&\qquad\qquad\qquad\times
\Biggl(
\sup_{i\neq j}
\sum_{j=1}^N
\sum_{\bs k_1,\bs k_2\in\mathbb N^2}
\Bigl|
\EE[B_{i,\bs k_1}^{Q_1} B_{j,\bs k_2}^{Q_1} ]
e_{\bs k_1}(y,z)e_{\bs k_2}(y,z)
\Bigr|
\Biggr)
\\
&\qquad\qquad+ 
\Biggl(
\sup_{j\neq i}\sum_{i=1}^N
\sum_{\bs k_1,\bs k_2\in\mathbb N^2}
\Bigl|
\EE[B_{i,\bs k_1}^{Q_1} B_{j,\bs k_2}^{Q_1} ]e_{\bs k_1}(y,z)e_{\bs k_2}(y,z)
\Bigr|
\Biggr)^2
\Biggr\}
\\
&=O\Biggl(
\biggl(\frac{\epsilon^{-2}}{N\Delta_N^\alpha}\biggr)^2
(\epsilon^2\Delta_N^{1/2} \lor \epsilon^4)
\Biggr)
=O(\epsilon^{-2}\Delta_N^{2(1-\alpha)+1/2} \lor \Delta_N^{2(1-\alpha)}),
\end{align*}
one has $\sup_{(y,z)\in D_\delta}
\VV[\epsilon^{-2}Z_N^{Q_1}(y,z)]=O(\Delta_N^{1\land2(1-\alpha)} 
\lor \epsilon^{-2}\Delta_N^{(1-\alpha+\beta)\land(2(1-\alpha)+1/2)})$.
Hence, \eqref{eq-0003} holds. 

By applying the Taylor expansion,
\begin{align*}
-\partial{U}_{N,m,\epsilon}^{(1)}(\nu^*)^\TT
=\partial{U}_{N,m,\epsilon}^{(1)}(\hat\nu)^\TT
-\partial{U}_{N,m,\epsilon}^{(1)}(\nu^*)^\TT
=\int_0^1\partial^2{U}_{N,m,\epsilon}^{(1)}(\nu^*+u(\hat\nu-\nu^*))\dd u(\hat\nu-\nu^*),
\end{align*}
that is, for $\gamma>0$ with $mN^{2\gamma}R_{N,\epsilon}^{-1}\to0$, 
\begin{equation*}
-\frac{N^{\gamma}}{m^{1/2}}\partial{U}_{N,m,\epsilon}^{(1)}(\nu^*)^\TT
=\frac{1}{m}\int_0^1\partial^2{U}_{N,m,\epsilon}^{(1)}(\nu^*+u(\hat\nu-\nu^*))\dd u\, 
m^{1/2}N^{\gamma}(\hat\nu-\nu^*).
\end{equation*}

In the same way as the proof of Theorem 3.2 in Tonaki et al. \cite{TKU2022arXiv}, 
it holds from \eqref{eq-0002} and \eqref{eq-0003} that
\begin{enumerate}
\item[(i)]
$\hat\nu\pto\nu^*$,

\item[(ii)]
There exists $U^*\in\mathrm{GL}_3(\mathbb R)$ such that
$\frac{1}{m}\partial^2{U}_{N,m,\epsilon}^{(1)}(\nu^*)\pto U^*$,

\item[(iii)]
For $\delta_{N,\epsilon}\downarrow0$,
\begin{equation*}
\frac{1}{m}\sup_{|\nu-\nu^*|\le\delta_{N,\epsilon}}
|\partial^2{U}_{N,m,\epsilon}^{(1)}(\nu)
-\partial^2{U}_{N,m,\epsilon}^{(1)}(\nu^*)|\pto0.
\end{equation*}
\end{enumerate}
Thus all that remains is to prove
\begin{equation}\label{eq-0004}
-\frac{N^{\gamma}}{m^{1/2}}\partial{U}_{N,m,\epsilon}^{(1)}(\nu^*)^\TT \pto 0. 
\end{equation}

\textit{Proof of \eqref{eq-0004}. }
Let $\varphi_\nu(y,z)
=\exp(-\frac{\theta_1}{\theta_2}y)\exp(-\frac{\eta_1}{\theta_2}z)(1,-s y,-s z)^\TT$.
Since
\begin{equation*}
\EE[\xi_{N,\epsilon}^*(y,z)]=
O(\Delta_N^{1-\alpha} \lor \epsilon^{-2}\Delta_N^{1-\alpha+\beta}),
\end{equation*}
\begin{equation*}
\EE\bigl[(\xi_{N,\epsilon}^*(y,z)-\EE[\xi_{N,\epsilon}^*(y,z)])^2\bigr]
=O(R_{N,\epsilon}^{-1})
\end{equation*}
uniformly in $(y,z)\in D_\delta$,
it follows from  
$R_{N,\epsilon}^{1/2}\le N^{1-\alpha} \land \epsilon^2 N^{1-\alpha+\beta}$ that
\begin{align*}
&-\frac{N^{\gamma}}{m^{1/2}}
\partial{U}_{N,m,\epsilon}^{(1)}(\nu^*)^\TT
\\
&=
\frac{N^\gamma}{m^{1/2}}
\frac{\Gamma(1-\alpha)}{2\pi\alpha}
\sum_{j_1=1}^{m_1}\sum_{j_2=1}^{m_2}
\bigl\{
\xi_{N,\epsilon}^*(\widetilde y_{j_1},\widetilde z_{j_2})
-\EE[\xi_{N,\epsilon}^*(\widetilde y_{j_1},\widetilde z_{j_2})]
\bigr\}
\varphi_{\nu^*}(\widetilde y_{j_1}, \widetilde z_{j_2})
\\
&\qquad+
\frac{N^\gamma}{m^{1/2}}
\frac{\Gamma(1-\alpha)}{2\pi\alpha}
\sum_{j_1=1}^{m_1}\sum_{j_2=1}^{m_2}
\EE[\xi_{N,\epsilon}^*(\widetilde y_{j_1},\widetilde z_{j_2})]
\varphi_{\nu^*}(\widetilde y_{j_1}, \widetilde z_{j_2})
\\
&\lesssim
m^{1/2}N^\gamma 
\Biggl(
\frac{1}{m}
\sum_{j_1=1}^{m_1}\sum_{j_2=1}^{m_2}
\bigl\{
\xi_{N,\epsilon}^*(\widetilde y_{j_1},\widetilde z_{j_2})
-\EE[\xi_{N,\epsilon}^*(\widetilde y_{j_1},\widetilde z_{j_2})]
\bigr\}^2
\Biggr)^{1/2}
\\
&\qquad\qquad\times
\Biggl(
\frac{1}{m}
\sum_{j_1=1}^{m_1}\sum_{j_2=1}^{m_2}
|\varphi_{\nu^*}(\widetilde y_{j_1}, \widetilde z_{j_2})|^2
\Biggr)^{1/2}
\\
&\qquad+
\frac{N^\gamma}{m^{1/2}}
\sum_{j_1=1}^{m_1}\sum_{j_2=1}^{m_2}
\EE[\xi_{N,\epsilon}^*(\widetilde y_{j_1},\widetilde z_{j_2})]
\varphi_{\nu^*}(\widetilde y_{j_1}, \widetilde z_{j_2})
\\
&=
O_p\bigl(m^{1/2}N^\gamma R_{N,\epsilon}^{-1/2}\bigr)
+O\bigl(m^{1/2}N^\gamma
(\Delta_N^{1-\alpha} \lor \epsilon^{-2}\Delta_N^{1-\alpha+\beta})
\bigr)
\\
&=o_p(1).
\end{align*}
\end{proof}

Before proving Theorem \ref{th2}, 
we prepare a lemma on the error evaluation between 
the approximate coordinate process $\hat x_{\bs k}^{Q_1}(t)$
and the coordinate process $x_{\bs k}^{Q_1}(t)$.
Hereafter, let 
$X_t=X_t^{Q_1}$, $x_{\bs k}=x_{\bs k}^{Q_1}$ and 
$\hat x_{\bs k}=\hat x_{\bs k}^{Q_1}$.
We set 
\begin{equation}\label{eq-0101}
\mathcal M_{i}(\lambda)
=x_{1,1}(\widetilde t_i)-\ee^{-\lambda\Delta_n} x_{1,1}(\widetilde t_{i-1}),
\quad
\hat{\mathcal M}_{i}(\lambda)
=\hat x_{1,1}(\widetilde t_i)-\ee^{-\lambda\Delta_n} \hat x_{1,1}(\widetilde t_{i-1}),
\end{equation}
\begin{equation*}
\hat{\mathcal A}_n
=\sup_{\lambda}\sum_{i=1}^n
\{\hat{\mathcal M}_i(\lambda)-\mathcal M_i(\lambda)\}^2,
\quad
\hat{\mathcal B}_n
=\sum_{i=0}^n
\{\hat x_{1,1}(\widetilde t_{i})-x_{1,1}(\widetilde t_{i})\}^2.
\end{equation*}
\begin{lem}\label{lem3}
Assume [A1]-[A3].
\begin{enumerate}
\item[(1)]
Let $\{r_{n,\epsilon}\}$ be a sequence such that
$\frac{(\epsilon^2n^{1-\alpha} \lor n^{-\beta})r_{n,\epsilon}}{m N^{2\gamma}}\to0$,
$\frac{n^{\tau_1}r_{n,\epsilon}}{(M_1\land M_2)^{2\tau_1}}\to0$ 
and $\frac{n^{1-\alpha+\tau_2}\epsilon^2r_{n,\epsilon}}{(M_1\land M_2)^{2\tau_2}}\to0$
for some $\tau_1\in[0,1)$ and $\tau_2\in[0,\alpha)$. 
As $n,M\to\infty$ and $\epsilon\to0$, 
\begin{equation*}
r_{n,\epsilon} \hat{\mathcal A}_n=o_p(1).
\end{equation*}

\item[(2)]
Let $\{s_{n,\epsilon}\}$ be a sequence such that
$\frac{n s_{n,\epsilon}}{m N^{2\gamma}}\to0$,
$\frac{n s_{n,\epsilon}}{(M_1 \land M_2)^{1+\tau_3}}\to0$
and $\frac{n\epsilon^2 s_{n,\epsilon}}{(M_1 \land M_2)^{2\tau_4}}\to0$
for some $\tau_3\in[0,1)$ and $\tau_4\in[0,\alpha)$. 
As $n,M\to\infty$ and $\epsilon\to0$, 
\begin{equation*}
s_{n,\epsilon} \hat{\mathcal B}_n=o_p(1).
\end{equation*}

\end{enumerate}
\end{lem}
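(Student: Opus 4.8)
The plan is to split the total error $\Delta_i:=\hat x_{1,1}(\widetilde t_i)-x_{1,1}(\widetilde t_i)$ into a parameter-estimation part and a spatial-discretization part. Writing $\phi_\nu(y,z)=\sin(\pi y)\sin(\pi z)\exp(\frac{\theta_1}{2\theta_2}y)\exp(\frac{\eta_1}{2\theta_2}z)$ for $\nu=(\theta_1,\eta_1,\theta_2)$ and inserting the Riemann sum evaluated at the true parameter, $\bar x_{1,1}(\widetilde t_i)=\frac2M\sum_{j_1=1}^{M_1}\sum_{j_2=1}^{M_2}X_{\widetilde t_i}(y_{j_1},z_{j_2})\phi_{\nu^*}(y_{j_1},z_{j_2})$, I would write $\Delta_i=U_i+V_i$ with $U_i=\hat x_{1,1}(\widetilde t_i)-\bar x_{1,1}(\widetilde t_i)$ and $V_i=\bar x_{1,1}(\widetilde t_i)-x_{1,1}(\widetilde t_i)$. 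The structural fact driving the whole proof is that $U_i$ and $V_i$ are linear functionals of the random field $X_{\widetilde t_i}$.

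For the parameter part, a first-order Taylor expansion of $\phi_{\hat\nu}-\phi_{\nu^*}$ in the ratios $(\hat\theta_1/\hat\theta_2,\hat\eta_1/\hat\theta_2)$ around $(\theta_1^*/\theta_2^*,\eta_1^*/\theta_2^*)$ factors out the estimation error, which is $O_p((mN^{2\gamma})^{-1/2})$ by Theorem \ref{th1}, times a Riemann sum of $X_{\widetilde t_i}$ against $y\,\phi_{\nu^*}$ and $z\,\phi_{\nu^*}$; the latter is $O_p(1)$ by the moment bound $\sup_t\EE[\|X_t\|_\theta^2]<\infty$, so $\EE[U_i^2]=O((mN^{2\gamma})^{-1})$. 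For the discretization part I would insert the mild-solution decomposition \eqref{small_cp1_ver1}, splitting $X_{\widetilde t_i}$ into its deterministic initial-value term and its stochastic convolution, and estimate the Riemann-sum-minus-integral error mode by mode using the sine structure of the $e_{\bs k}$. The deterministic term is spatially smooth (its regularity controlled by $\|A_\theta\xi\|_\theta^2<\infty$ from [A1]) and contributes an error of order $(M_1\land M_2)^{-(1+\tau_3)}$ for part (2) (resp.\ $(M_1\land M_2)^{-2\tau_1}$ for part (1)), whereas the stochastic term, whose spatial regularity is limited to order $\alpha$ by the $Q_1$-noise, contributes $\epsilon^2(M_1\land M_2)^{-2\tau_4}$ (resp.\ $\epsilon^2(M_1\land M_2)^{-2\tau_2}$); the free exponents $\tau_1,\tau_2,\tau_3,\tau_4$ arise from interpolating between crude and smooth bounds on the aliasing sums $\sum_{\bs k}\lambda_{\bs k}^{-s}$.

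Part (2) is then immediate: $\EE[\hat{\mathcal B}_n]\le 2\sum_{i=0}^n(\EE[U_i^2]+\EE[V_i^2])$, and the three resulting summands are $n(mN^{2\gamma})^{-1}$, $n(M_1\land M_2)^{-(1+\tau_3)}$ and $n\epsilon^2(M_1\land M_2)^{-2\tau_4}$, matching exactly the three hypotheses on $s_{n,\epsilon}$; Markov's inequality gives $s_{n,\epsilon}\hat{\mathcal B}_n=o_p(1)$.

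Part (1) is where the real work lies. The gain over part (2) comes from the identity $\hat{\mathcal M}_i(\lambda)-\mathcal M_i(\lambda)=(U_i-\ee^{-\lambda\Delta_n}U_{i-1})+(V_i-\ee^{-\lambda\Delta_n}V_{i-1})$: because $U_i$ and $V_i$ are linear in $X_{\widetilde t_i}$, each difference is the same functional applied to $X_{\widetilde t_i}-\ee^{-\lambda\Delta_n}X_{\widetilde t_{i-1}}$, which for the coordinate processes equals, up to the deterministic mismatch factor $(\ee^{-\lambda_{\bs k}\Delta_n}-\ee^{-\lambda\Delta_n})$, only the noise increment over $[\widetilde t_{i-1},\widetilde t_i]$. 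Summing the increment variances $\epsilon^2\lambda_{\bs k}^{-\alpha}(1-\ee^{-2\lambda_{\bs k}\Delta_n})/(2\lambda_{\bs k})$ over all modes produces a per-step order $\epsilon^2 n^{-\alpha}$ rather than $O(1)$, while the deterministic mismatch is handled by transferring the estimates of Lemma \ref{lem1} to the $n$-grid to give the decaying factor $n^{-\beta}$; summed over the $n$ steps these yield $\EE[\hat{\mathcal A}_n]$ of order $(\epsilon^2 n^{1-\alpha}\lor n^{-\beta})(mN^{2\gamma})^{-1}$ from the parameter part together with the two reduced discretization orders $n^{\tau_1}(M_1\land M_2)^{-2\tau_1}$ and $n^{1-\alpha+\tau_2}\epsilon^2(M_1\land M_2)^{-2\tau_2}$, which is precisely what the hypotheses on $r_{n,\epsilon}$ require. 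The main obstacle is carrying out this innovation-difference estimate uniformly in $\lambda$ over the compact parameter set—so that the supremum defining $\hat{\mathcal A}_n$ can be taken inside the bound—while simultaneously controlling the rough spatial Riemann-sum error of the stochastic convolution; once that is done, Markov's inequality completes the proof.
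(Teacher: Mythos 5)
Your decomposition and all of the quantitative orders match the paper's own proof: the paper splits $\hat{\mathcal M}_i(\lambda)-\mathcal M_i(\lambda)$ into a parameter term (your $U$-part, handled by a Taylor expansion and Theorem \ref{th1}), a weight-discretization term and a field-discretization term (together your $V$-part, handled mode by mode with the bound $|e_{\bs k}(y_{j_1},z_{j_2})-e_{\bs k}(y,z)|\lesssim \lambda_{\bs k}^{1/2}(M_1\land M_2)^{-1}\land 1$ and the interpolation exponents $\tau_1,\tau_2,\tau_3,\tau_4$), and it exploits exactly the innovation structure you describe to get the summed order $\epsilon^2 n^{1-\alpha}\lor n^{-\beta}$ in part (1) instead of the order $n$ of part (2). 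However, two steps in your write-up would fail as stated. First, you cannot write $\EE[U_i^2]=O((mN^{2\gamma})^{-1})$, nor $\EE[\hat{\mathcal B}_n]\le 2\sum_{i}(\EE[U_i^2]+\EE[V_i^2])$: Theorem \ref{th1} gives only convergence in probability, not an $L^2$ rate, and $\hat\nu$ is a functional of the whole data set, hence correlated with the field appearing in $U_i$, so a product of $O_p$ bounds does not convert into a moment bound. The paper's fix is to Taylor-expand so that $U_i^2\le \mathcal A_{1,i}'\cdot mN^{2\gamma}\bigl(|\hat\theta_1-\theta_1^*|^2+|\hat\eta_1-\eta_1^*|^2+|\hat\theta_2-\theta_2^*|^2\bigr)$, restrict to the event $\Omega_{\epsilon_1}$ on which the parameter error is smaller than $\epsilon_1$, and apply Markov's inequality only to $r_{n,\epsilon}\sum_i\mathcal A_{1,i}'$ (respectively $s_{n,\epsilon}\sum_i \mathcal B_{1,i}'$), whose expectation is genuinely controlled; the remaining probability $P(\Omega_{\epsilon_1}^{\mathrm c})$ vanishes by Theorem \ref{th1}.

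Second, the uniformity in $\lambda$ that you flag as ``the main obstacle'' is not optional—$\hat{\mathcal A}_n$ carries $\sup_\lambda$—and it has a one-line resolution that you should supply rather than defer: write $\mathcal N_i(y,z;\lambda)=X_{\widetilde t_i}(y,z)-\ee^{-\lambda\Delta_n}X_{\widetilde t_{i-1}}(y,z)=\Delta_i X(y,z)+(1-\ee^{-\lambda\Delta_n})X_{\widetilde t_{i-1}}(y,z)$ and use $\sup_\lambda|1-\ee^{-\lambda\Delta_n}|\lesssim\Delta_n$ together with $\sup_{t}\EE[X_t^2(y,z)]\lesssim 1$, which gives $\sum_{i=1}^n\EE\bigl[\sup_\lambda\mathcal N_i(y,z;\lambda)^2\bigr]\lesssim\epsilon^2 n^{1-\alpha}\lor n^{-\beta}\lor n^{-1}=\epsilon^2 n^{1-\alpha}\lor n^{-\beta}$ uniformly in $(y,z)$. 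This is cleaner than carrying your mode-wise mismatch factor $\ee^{-\lambda_{\bs k}\Delta_n}-\ee^{-\lambda\Delta_n}$, which is \emph{not} small uniformly in $\bs k$ (for $\lambda_{\bs k}\Delta_n$ large it is close to $-1$); only after splitting off $(1-\ee^{-\lambda\Delta_n})$ does the remaining piece recombine into $\Delta_i x_{\bs k}$, to which Lemma \ref{lem1} and the bound \eqref{eq-0001} on the $n$-grid apply. With these two repairs your argument coincides with the paper's proof.
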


By setting the following condition
\begin{enumerate}
\item[\textbf{[C3$\boldsymbol'$]}]
$\frac{\epsilon^{-2}}{mN^{2\gamma}}\to0$
and $\frac{\epsilon^{-2}}{(M_1 \land M_2)^{1+\tau_4}}\to0$
for some $\tau_4\in[0,\alpha)$,
\end{enumerate}
it holds from Lemma \ref{lem3} that 
\begin{align}
n\hat{\mathcal A}_n &=o_p(1) \quad \text{under [C1]}, 
\label{eq-0102}
\\
\epsilon^{-2}\hat{\mathcal A}_n &=o_p(1) \quad \text{under [C2]}, 
\label{eq-0103}
\\
n\epsilon^{-2}\hat{\mathcal A}_n &=o_p(1) \quad \text{under [C3]},
\label{eq-0104}
\\
(n\epsilon^2)^{-1}\hat{\mathcal B}_n &=o_p(1) \quad \text{under [C3$'$]}.
\nonumber
\end{align}
Note that if [C3] holds, then [C3$'$] is satisfied. It also holds that 
\begin{equation}
(n\epsilon^2)^{-1}\hat{\mathcal B}_n=o_p(1) \quad \text{under [C3]}.
\label{eq-0105}
\end{equation}

\begin{proof}[\bf{Proof of Lemma \ref{lem3}}]
(1) Let $D_{j_1,j_2}=(y_{j_1-1},y_{j_1}]\times(z_{j_2-1},z_{j_2}]$, 
$\Delta_i X(y,z)=X_{\widetilde t_i}(y,z)-X_{\widetilde t_{i-1}}(y,z)$
and $\mathcal N_i(y,z;\lambda)=X_{\widetilde t_{i}}(y,z)
-\ee^{-\lambda\Delta_n}X_{\widetilde t_{i-1}}(y,z)$.
Since
\begin{align*}
\hat x_{1,1}(t)
&=
\frac{2}{M}\sum_{j_1=1}^{M_1}\sum_{j_2=1}^{M_2}
X_t(y_{j_1},z_{j_2})\sin(\pi y_{j_1})\sin(\pi z_{j_2})
\exp\biggl(
\frac{\hat\theta_1}{2\hat\theta_2}y_{j_1}
+\frac{\hat\eta_1}{2\hat\theta_2}z_{j_2}
\biggr),
\\
x_{1,1}(t)
&=
2\sum_{j_1=1}^{M_1}\sum_{j_2=1}^{M_2}\iint_{D_{j_1,j_2}}
X_t(y,z)
\sin(\pi y)\sin(\pi z)
\exp\biggl(
\frac{\theta_1^*}{2\theta_2^*}y+\frac{\eta_1^*}{2\theta_2^*}z
\biggr)
\dd y\dd z,
\end{align*}
it follows that 
\begin{align*}
\hat{\mathcal M}_i(\lambda)-\mathcal M_i(\lambda)
&=
\frac{2}{M}\sum_{j_1=1}^{M_1}\sum_{j_2=1}^{M_2}
\mathcal N_i(y_{j_1},z_{j_2};\lambda)\sin(\pi y_{j_1})\sin(\pi z_{j_2})
\\
&\qquad\qquad\times
\Biggl\{
\exp\biggl(
\frac{\hat\theta_1}{2\hat\theta_2}y_{j_1}
+\frac{\hat\eta_1}{2\hat\theta_2}z_{j_2}
\biggr)
-\exp\biggl(
\frac{\theta_1^*}{2\theta_2^*}y_{j_1}+\frac{\eta_1^*}{2\theta_2^*}z_{j_2}
\biggr)
\Biggr\}
\\
&\qquad+
2\sum_{j_1=1}^{M_1}\sum_{j_2=1}^{M_2}\iint_{D_{j_1,j_2}}
\mathcal N_i(y_{j_1},z_{j_2};\lambda)
\\
&\qquad\qquad\times
\Biggl\{\sin(\pi y_{j_1})\sin(\pi z_{j_2})
\exp\biggl(
\frac{\theta_1^*}{2\theta_2^*}y_{j_1}+\frac{\eta_1^*}{2\theta_2^*}z_{j_2}
\biggr)
\\
&\qquad\qquad\qquad-
\sin(\pi y)\sin(\pi z)
\exp\biggl(
\frac{\theta_1^*}{2\theta_2^*}y+\frac{\eta_1^*}{2\theta_2^*}z
\biggr)
\Biggr\}
\dd y\dd z
\\
&\qquad+
2\sum_{j_1=1}^{M_1}\sum_{j_2=1}^{M_2}\iint_{D_{j_1,j_2}}
\bigl\{\mathcal N_i(y_{j_1},z_{j_2};\lambda)-\mathcal N_i(y,z;\lambda)\bigr\}
\\
&\qquad\qquad\times
\sin(\pi y)\sin(\pi z)
\exp\biggl(
\frac{\theta_1^*}{2\theta_2^*}y+\frac{\eta_1^*}{2\theta_2^*}z
\biggr)
\dd y\dd z
\\
&=:2\{\mathcal A_{1,i}(\lambda)+\mathcal A_{2,i}(\lambda)+\mathcal A_{3,i}(\lambda)\}
\end{align*}
and
\begin{align*}
\hat{\mathcal A}_n
\lesssim
\sum_{i=1}^{n}
\Bigl\{
\sup_{\lambda}\mathcal A_{1,i}(\lambda)^2
+\sup_{\lambda}\mathcal A_{2,i}(\lambda)^2
+\sup_{\lambda}\mathcal A_{3,i}(\lambda)^2
\Bigr\}.
\end{align*}

For the evaluation of $\mathcal A_{1,i}$,
by the Taylor expansion, 
\begin{align*}
\sup_{\lambda}\mathcal A_{1,i}(\lambda)^2
&\le
\frac{1}{M}\sum_{j_1=1}^{M_1}\sum_{j_2=1}^{M_2}
\sup_{\lambda}\mathcal N_i(y_{j_1},z_{j_2};\lambda)^2
\sin^2(\pi y_{j_1})\sin^2(\pi z_{j_2})
\\
&\qquad\times
\Biggl\{
\exp\biggl(
\frac{\hat\theta_1}{2\hat\theta_2}y_{j_1}
+\frac{\hat\eta_1}{2\hat\theta_2}z_{j_2}
\biggr)
-\exp\biggl(
\frac{\theta_1^*}{2\theta_2^*}y_{j_1}+\frac{\eta_1^*}{2\theta_2^*}z_{j_2}
\biggr)
\Biggr\}^2
\\
&=\frac{1}{M}\sum_{j_1=1}^{M_1}\sum_{j_2=1}^{M_2}
\sup_{\lambda}\mathcal N_i(y_{j_1},z_{j_2};\lambda)^2
\sin^2(\pi y_{j_1})\sin^2(\pi z_{j_2})
\\
&\qquad\times
\Biggl\{
\frac{1}{2(\theta_2^*)^2}\int_0^1(\theta_2^* y_{j_1},\theta_2^* z_{j_2},
-\theta_1^*y_{j_1}-\eta_1^*z_{j_2})
\\
&\qquad\qquad
\left.
\times
\exp\Biggl(
\frac{y_{j_1}}{2}
\frac{\theta_1^*+u(\hat\theta_1-\theta_1^*)}{\theta_2^*+u(\hat\theta_2-\theta_2^*)}
+\frac{z_{j_2}}{2}
\frac{\eta_1^*+u(\hat\eta_1-\eta_1^*)}{\theta_2^*+u(\hat\theta_2-\theta_2^*)}
\Biggr)\dd u
\begin{pmatrix}
\hat\theta_1-\theta_1^*
\\
\hat\eta_1-\eta_1^*
\\
\hat\theta_2-\theta_2^*
\end{pmatrix}
\right\}
\\
&\le \frac{1}{mN^{2\gamma}}\frac{1}{M}\sum_{j_1=1}^{M_1}\sum_{j_2=1}^{M_2}
\sup_{\lambda}\mathcal N_i(y_{j_1},z_{j_2};\lambda)^2
\\
&\qquad\times
\Biggl|
\frac{1}{2(\theta_2^*)^2}\int_0^1(\theta_2^* y_{j_1},\theta_2^* z_{j_2},
-\theta_1^*y_{j_1}-\eta_1^*z_{j_2})
\\
&\qquad\qquad
\times
\exp\Biggl(
\frac{y_{j_1}}{2}
\frac{\theta_1^*+u(\hat\theta_1-\theta_1^*)}{\theta_2^*+u(\hat\theta_2-\theta_2^*)}
+\frac{z_{j_2}}{2}
\frac{\eta_1^*+u(\hat\eta_1-\eta_1^*)}{\theta_2^*+u(\hat\theta_2-\theta_2^*)}
\Biggr)\dd u
\Biggr|^2
\\
&\qquad\times
mN^{2\gamma}
(|\hat\theta_1-\theta_1^*|^2+|\hat\eta_1-\eta_1^*|^2+|\hat\theta_2-\theta_2^*|^2)
\\
&=:\mathcal A_{1,i}'
\times 
mN^{2\gamma}
(|\hat\theta_1-\theta_1^*|^2+|\hat\eta_1-\eta_1^*|^2+|\hat\theta_2-\theta_2^*|^2).
\end{align*}
It follows from Lemma \ref{lem1} and \eqref{eq-0001} that
\begin{equation}\label{eq-0106}
\sum_{i=1}^n \EE[(\Delta_i X)^2(y,z)]
\lesssim \epsilon^2n^{1-\alpha} \lor n^{-\beta},
\quad
\sup_{t\in[0,1]}\EE[X_t^2(y,z)]\lesssim 1
\end{equation}
uniformly in $(y,z)\in D$, and then it holds from 
\begin{equation*}
\mathcal N_i(y,z;\lambda)
=\Delta_i X(y,z)+(1-\ee^{-\lambda\Delta_n})X_{\widetilde t_{i-1}}(y,z)
\end{equation*}
and $\sup_{\lambda}|1-\ee^{-\lambda\Delta_n}|\lesssim \Delta_n$ 
that uniformly in $(y,z)\in D$,
\begin{equation*}
\sum_{i=1}^n \EE\Bigl[\sup_{\lambda}\mathcal N_i(y,z;\lambda)^2\Bigr]
\lesssim \epsilon^2n^{1-\alpha} \lor n^{-\beta} \lor n^{-1}
=\epsilon^2n^{1-\alpha} \lor n^{-\beta}.
\end{equation*}
Choose any positive numbers $\epsilon_1$ and $\epsilon_2$.
On $\Omega_{\epsilon_1}=\{
|\hat\theta_1-\theta_1^*|+|\hat\eta_1-\eta_1^*|+|\hat\theta_2-\theta_2^*|
<\epsilon_1\}$,
\begin{equation*}
\mathcal A_n'
:=
r_{n,\epsilon}
\sum_{i=1}^{n}\mathcal A_{1,i}'
\lesssim
\frac{r_{n,\epsilon}}{mN^{2\gamma}}\sum_{i=1}^{n}
\frac{1}{M}
\sum_{j_1=1}^{M_1}\sum_{j_2=1}^{M_2}
\sup_{\lambda}\mathcal N_i(y_{j_1},z_{j_2};\lambda)^2,
\end{equation*}
and
\begin{align*}
P(|\mathcal A_n'|>\epsilon_2)
&=P(\{|\mathcal A_n'|>\epsilon_2\}\cap \Omega_{\epsilon_1})
+P(\{|\mathcal A_n'|>\epsilon_2\}\cap \Omega_{\epsilon_1}^{\mathrm c})
\nonumber
\\
&\le
P\Biggl(
\frac{r_{n,\epsilon}}{mN^{2\gamma}}
\sum_{i=1}^{n}
\frac{1}{M}\sum_{j_1=1}^{M_1}\sum_{j_2=1}^{M_2}
\sup_{\lambda}\mathcal N_i(y_{j_1},z_{j_2};\lambda)^2
\gtrsim\epsilon_2
\Biggr)
+P(\Omega_{\epsilon_1}^{\mathrm c})
\nonumber
\\
&\lesssim
\frac{r_{n,\epsilon}}{\epsilon_2 m N^{2\gamma}}
\sum_{i=1}^{n}
\frac{1}{M}\sum_{j_1=1}^{M_1}\sum_{j_2=1}^{M_2}
\EE\Bigl[\sup_{\lambda}\mathcal N_i(y_{j_1},z_{j_2};\lambda)^2\Bigr]
+P(\Omega_{\epsilon_1}^{\mathrm c})
\nonumber
\\
&\lesssim
\frac{(\epsilon^2n^{1-\alpha} \lor n^{-\beta})r_{n,\epsilon}}
{\epsilon_2 mN^{2\gamma}}
+P(\Omega_{\epsilon_1}^{\mathrm c}).
\end{align*}
Therefore, one has from 
$\frac{(\epsilon^2n^{1-\alpha} \lor n^{-\beta})r_{n,\epsilon}}{m N^{2\gamma}}\to0$ 
and Theorem \ref{th1} that $\mathcal A_n'=o_p(1)$ and
\begin{equation*}
r_{n,\epsilon}\sum_{i=1}^{n}
\sup_{\lambda}\mathcal A_{1,i}(\lambda)^2
=o_p(1).
\end{equation*}

Noting that for the evaluation of $\mathcal A_{2,i}$,
\begin{align*}
\sum_{i=1}^{n}
\EE\Bigl[\sup_{\lambda}\mathcal A_{2,i}^2(\lambda)\Bigr]
&\le
\sum_{i=1}^{n}
\sum_{j_1=1}^{M_1}\sum_{j_2=1}^{M_2}\iint_{D_{j_1,j_2}}
\EE\Bigl[\sup_{\lambda}\mathcal N_i(y_{j_1},z_{j_2};\lambda)^2\Bigr]
\\
&\qquad\times
\Biggl\{\sin(\pi y_{j_1})\sin(\pi z_{j_2})
\exp\biggl(
\frac{\theta_1^*}{2\theta_2^*}y_{j_1}+\frac{\eta_1^*}{2\theta_2^*}z_{j_2}
\biggr)
\\
&\qquad\qquad-
\sin(\pi y)\sin(\pi z)
\exp\biggl(
\frac{\theta_1^*}{2\theta_2^*}y+\frac{\eta_1^*}{2\theta_2^*}z
\biggr)
\Biggr\}^2
\dd y\dd z
\\
&\lesssim
\sum_{i=1}^{n}
\sum_{j_1=1}^{M_1}\sum_{j_2=1}^{M_2}
\EE\Bigl[\sup_{\lambda}\mathcal N_i(y_{j_1},z_{j_2};\lambda)^2\Bigr]
\iint_{D_{j_1,j_2}}
\biggl(\frac{1}{M_1^2}+\frac{1}{M_2^2}\biggr)
\dd y\dd z
\\
&\lesssim
\frac{\epsilon^2n^{1-\alpha} \lor n^{-\beta}}{(M_1 \land M_2)^2},
\end{align*}
one has that under
$\frac{n^{\tau_1}r_{n,\epsilon}}{(M_1\land M_2)^{2\tau_1}}\to0$ 
and $\frac{n^{1-\alpha+\tau_2}\epsilon^2r_{n,\epsilon}}{(M_1\land M_2)^{2\tau_2}}\to0$,
\begin{equation*}
r_{n,\epsilon}
\sum_{i=1}^{n}
\sup_{\lambda}\mathcal A_{2,i}^2(\lambda)
=
O_p\biggl(
\frac{(\epsilon^2n^{1-\alpha} \lor n^{-\beta})r_{n,\epsilon}}
{(M_1 \land M_2)^2}
\biggr)
=o_p(1).
\end{equation*}

For the evaluation of $\mathcal A_{3,i}$, 
it is shown that
\begin{align*}
\sup_{\lambda}\mathcal A_{3,i}^2(\lambda)
&\le
\sum_{j_1=1}^{M_1}\sum_{j_2=1}^{M_2}\iint_{D_{j_1,j_2}}
\sup_{\lambda}
\bigl\{\mathcal N_i(y_{j_1},z_{j_2};\lambda)-\mathcal N_i(y,z;\lambda)\bigr\}^2
\\
&\qquad\qquad\times
\sin^2(\pi y)\sin^2(\pi z)
\exp\biggl(
\frac{\theta_1^*}{\theta_2^*}y+\frac{\eta_1^*}{\theta_2^*}z
\biggr)
\dd y\dd z
\\
&\lesssim
\sum_{j_1=1}^{M_1}\sum_{j_2=1}^{M_2}\iint_{D_{j_1,j_2}}
\sup_{\lambda}
\bigl\{\mathcal N_i(y_{j_1},z_{j_2};\lambda)-\mathcal N_i(y,z;\lambda)\bigr\}^2
\dd y\dd z
\\
&\lesssim
\sum_{j_1=1}^{M_1}\sum_{j_2=1}^{M_2}\iint_{D_{j_1,j_2}}
\Bigl(
\{\Delta_i X(y_{j_1},z_{j_2})-\Delta_i X(y,z)\}^2
\\
&\qquad\qquad
+\sup_{\lambda}(1-\ee^{-\lambda\Delta_n})^2
\{X_{\widetilde t_{i-1}}(y_{j_1},z_{j_2})-X_{\widetilde t_{i-1}}(y,z)\}^2
\Bigr)
\dd y\dd z
\end{align*}
Since for $(y,z)\in D_{j_1,j_2}$,
\begin{align*}
X_{\widetilde t_{i-1}}(y_{j_1},z_{j_2})-X_{\widetilde t_{i-1}}(y,z)
&=\sum_{\bs k\in\mathbb N^2}
x_{\bs k}(\widetilde t_{i-1})\{e_{\bs k}(y_{j_1},z_{j_2})-e_{\bs k}(y,z)\},
\\
\Delta_i X(y_{j_1},z_{j_2})-\Delta_i X(y,z)
&=\sum_{\bs k\in\mathbb N^2}
\Delta_i x_{\bs k}\{e_{\bs k}(y_{j_1},z_{j_2})-e_{\bs k}(y,z)\},
\\
|e_{\bs k}(y_{j_1},z_{j_2})-e_{\bs k}(y,z)|
&\lesssim \frac{\lambda_{\bs k}^{1/2}}{M_1\land M_2} \land 1,
\end{align*}
it holds that for $\tau_1\in[0,1)$ and $\tau_2\in[0,\alpha)$,
\begin{align*}
&\EE\bigl[\{\Delta_i X(y_{j_1},z_{j_2})-\Delta_i X(y,z)\}^2\bigr]
\\
&=\sum_{\bs k,\bs \ell\in\mathbb N^2}
\EE[\Delta_i x_{\bs k}\Delta_i x_{\bs \ell}]
\{e_{\bs k}(y_{j_1},z_{j_2})-e_{\bs k}(y,z)\}
\{e_{\bs \ell}(y_{j_1},z_{j_2})-e_{\bs \ell}(y,z)\}
\\
&=
\sum_{\bs k,\bs \ell\in\mathbb N^2}
A_{i,\bs k}A_{i,\bs \ell}
\{e_{\bs k}(y_{j_1},z_{j_2})-e_{\bs k}(y,z)\}
\{e_{\bs \ell}(y_{j_1},z_{j_2})-e_{\bs \ell}(y,z)\}
\\
&\qquad+
\sum_{\bs k,\bs \ell\in\mathbb N^2}
\EE[B_{i,\bs k}B_{i,\bs \ell}]
\{e_{\bs k}(y_{j_1},z_{j_2})-e_{\bs k}(y,z)\}
\{e_{\bs \ell}(y_{j_1},z_{j_2})-e_{\bs \ell}(y,z)\}
\\
&\lesssim
\sum_{\bs k\in\mathbb N^2}
\frac{(1-\ee^{-\lambda_{\bs k}\Delta_{n}})^2}{\lambda_{\bs k}^2}
\{e_{\bs k}(y_{j_1},z_{j_2})-e_{\bs k}(y,z)\}^2
\\
&\qquad+\epsilon^2
\sum_{\bs k\in\mathbb N^2}
\frac{1-\ee^{-\lambda_{\bs k}\Delta_{n}}}{\lambda_{\bs k}^{1+\alpha}}
\{e_{\bs k}(y_{j_1},z_{j_2})-e_{\bs k}(y,z)\}^2
\\
&\le
\sum_{\bs k\in\mathbb N^2}
\frac{(1-\ee^{-\lambda_{\bs k}\Delta_{n}})^2}{\lambda_{\bs k}^2}
\biggl(\frac{\lambda_{\bs k}^{1/2}}{M_1\land M_2}\biggr)^{2\tau_1}
1^{2(1-\tau_1)}
\\
&\qquad+\epsilon^2
\sum_{\bs k\in\mathbb N^2}
\frac{1-\ee^{-\lambda_{\bs k}\Delta_{n}}}{\lambda_{\bs k}^{1+\alpha}}
\biggl(\frac{\lambda_{\bs k}^{1/2}}{M_1\land M_2}\biggr)^{2\tau_2}1^{2(1-\tau_2)}
\\
&=
\frac{1}{(M_1\land M_2)^{2\tau_1}}
\sum_{\bs k\in\mathbb N^2}
\frac{(1-\ee^{-\lambda_{\bs k}\Delta_{n}})^2}{\lambda_{\bs k}^{1+(1-\tau_1)}}
+\frac{\epsilon^2}{(M_1\land M_2)^{2\tau_2}}
\sum_{\bs k\in\mathbb N^2}
\frac{1-\ee^{-\lambda_{\bs k}\Delta_{n}}}{\lambda_{\bs k}^{1+(\alpha-\tau_2)}}
\\
&=
O\biggl(\frac{n^{\tau_1-1}}{(M_1\land M_2)^{2\tau_1}}\biggr)
+O\biggl(\frac{n^{\tau_2-\alpha}\epsilon^2}{(M_1\land M_2)^{2\tau_2}}\biggr),
\end{align*}
and that for $\tau_3\in[0,1)$ and $\tau_4\in[0,\alpha)$,
\begin{align}
&\EE\bigl[
\{X_{\widetilde t_i}(y_{j_1},z_{j_2})-X_{\widetilde t_{i-1}}(y,z)\}^2\bigr]
\nonumber
\\
&=\sum_{\bs k,\bs \ell\in\mathbb N^2}
\EE\bigl[x_{\bs k}(\widetilde t_i)x_{\bs \ell}(\widetilde t_i)\bigr]
\{e_{\bs k}(y_{j_1},z_{j_2})-e_{\bs k}(y,z)\}
\{e_{\bs \ell}(y_{j_1},z_{j_2})-e_{\bs \ell}(y,z)\}
\nonumber
\\
&\lesssim
\sum_{\bs k\in\mathbb N^2}
\biggl(
\frac{1}{\lambda_{\bs k}^2}
+\frac{\epsilon^2}{\lambda_{\bs k}^{1+\alpha}}
\biggr)
\{e_{\bs k}(y_{j_1},z_{j_2})-e_{\bs k}(y,z)\}^2
\nonumber
\\
&\le
\sum_{\bs k\in\mathbb N^2}
\frac{1}{\lambda_{\bs k}^2}
\biggl(\frac{\lambda_{\bs k}^{1/2}}{M_1 \land M_2}\biggr)^{1+\tau_3}
1^{1-\tau_3}
\nonumber
\\
&\qquad+
\sum_{\bs k\in\mathbb N^2}
\frac{\epsilon^2}{\lambda_{\bs k}^{1+\alpha}}
\biggl(\frac{\lambda_{\bs k}^{1/2}}{M_1 \land M_2}\biggr)^{2\tau_4}1^{2(1-\tau_4)}
\nonumber
\\
&=
\frac{1}{(M_1 \land M_2)^{1+\tau_3}}
\sum_{\bs k\in\mathbb N^2}
\frac{1}{\lambda_{\bs k}^{(3-\tau_3)/2}}
+\frac{\epsilon^2}{(M_1 \land M_2)^{2\tau_4}}
\sum_{\bs k\in\mathbb N^2}
\frac{1}{\lambda_{\bs k}^{1+\alpha-\tau_4}}
\nonumber
\\
&=
O\biggl(
\frac{1}{(M_1 \land M_2)^{1+\tau_3}}
\biggr)
+O\biggl(
\frac{\epsilon^2}{(M_1 \land M_2)^{2\tau_4}}
\biggr).
\label{eq-0107}
\end{align}
By choosing $\tau_3>2\tau_1-1$ and $\tau_4>\tau_2$, it holds that
\begin{align*}
\frac{n^{\tau_1}r_{n,\epsilon}}{(M_1\land M_2)^{2\tau_1}}
\lor \frac{n^{-1} r_{n,\epsilon}}{(M_1 \land M_2)^{1+\tau_3}}
&=O\biggl(\frac{n^{\tau_1}r_{n,\epsilon}}{(M_1\land M_2)^{2\tau_1}}\biggr),
\\
\frac{n^{1-\alpha+\tau_2}\epsilon^2r_{n,\epsilon}}{(M_1\land M_2)^{2\tau_2}}
\lor \frac{n^{-1}\epsilon^2 r_{n,\epsilon}}{(M_1 \land M_2)^{2\tau_4}}
&=O\biggl(
\frac{n^{1-\alpha+\tau_2}\epsilon^2r_{n,\epsilon}}{(M_1\land M_2)^{2\tau_2}}
\biggr),
\end{align*}
and that under
$\frac{n^{\tau_1}r_{n,\epsilon}}{(M_1\land M_2)^{2\tau_1}}\to0$ 
and $\frac{n^{1-\alpha+\tau_2}\epsilon^2r_{n,\epsilon}}{(M_1\land M_2)^{2\tau_2}}\to0$,
\begin{align*}
r_{n,\epsilon}
\sum_{i=1}^{n}
\sup_{\lambda}\mathcal A_{3,i}^2(\lambda)
&=
O_p\biggl(
\frac{n^{\tau_1}r_{n,\epsilon}}{(M_1\land M_2)^{2\tau_1}}
\biggr)
+O_p\biggl(
\frac{n^{1-\alpha+\tau_2}\epsilon^2r_{n,\epsilon}}{(M_1\land M_2)^{2\tau_2}}
\biggr)
\\
&\qquad+
O_p\biggl(
\frac{n^{-1} r_{n,\epsilon}}{(M_1 \land M_2)^{1+\tau_3}}
\biggr)
+O_p\biggl(
\frac{n^{-1}\epsilon^2 r_{n,\epsilon}}{(M_1 \land M_2)^{2\tau_4}}
\biggr)
\\
&=o_p(1).
\end{align*}
Hence, the desired result is obtained. 

(2) It is shown that 
$\hat{\mathcal B}_n
\lesssim \sum_{i=0}^{n}(\mathcal B_{1,i}^2+\mathcal B_{2,i}^2+\mathcal B_{3,i}^2)$,
where
\begin{align*}
\mathcal B_{1,i}
&=
\frac{1}{M}\sum_{j_1=1}^{M_1}\sum_{j_2=1}^{M_2}
X_{\widetilde t_i}(y_{j_1},z_{j_2})\sin(\pi y_{j_1})\sin(\pi z_{j_2})
\\
&\qquad\qquad\times
\Biggl\{
\exp\biggl(
\frac{\hat\theta_1}{2\hat\theta_2}y_{j_1}
+\frac{\hat\eta_1}{2\hat\theta_2}z_{j_2}
\biggr)
-\exp\biggl(
\frac{\theta_1^*}{2\theta_2^*}y_{j_1}+\frac{\eta_1^*}{2\theta_2^*}z_{j_2}
\biggr)
\Biggr\},
\\
\mathcal B_{2,i}
&=
\sum_{j_1=1}^{M_1}\sum_{j_2=1}^{M_2}\iint_{D_{j_1,j_2}}
X_{\widetilde t_i}(y_{j_1},z_{j_2})
\\
&\qquad\qquad\times
\Biggl\{\sin(\pi y_{j_1})\sin(\pi z_{j_2})
\exp\biggl(
\frac{\theta_1^*}{2\theta_2^*}y_{j_1}+\frac{\eta_1^*}{2\theta_2^*}z_{j_2}
\biggr)
\\
&\qquad\qquad\qquad-
\sin(\pi y)\sin(\pi z)
\exp\biggl(
\frac{\theta_1^*}{2\theta_2^*}y+\frac{\eta_1^*}{2\theta_2^*}z
\biggr)
\Biggr\}
\dd y\dd z,
\\
\mathcal B_{3,i}
&=2\sum_{j_1=1}^{M_1}\sum_{j_2=1}^{M_2}\iint_{D_{j_1,j_2}}
\bigl\{X_{\widetilde t_i}(y_{j_1},z_{j_2})-X_{\widetilde t_i}(y,z)\bigr\}
\\
&\qquad\qquad\times
\sin(\pi y)\sin(\pi z)
\exp\biggl(
\frac{\theta_1^*}{2\theta_2^*}y+\frac{\eta_1^*}{2\theta_2^*}z
\biggr)
\dd y\dd z.
\end{align*}

We obtain from the Taylor expansion that 
\begin{align*}
\mathcal B_{1,i}^2
&\le \frac{1}{mN^{2\gamma}}\frac{1}{M}\sum_{j_1=1}^{M_1}\sum_{j_2=1}^{M_2}
X_{\widetilde t_i}^2(y_{j_1},z_{j_2})
\\
&\qquad\times
\Biggl|
\frac{1}{2(\theta_2^*)^2}\int_0^1(\theta_2^* y_{j_1},\theta_2^* z_{j_2},
-\theta_1^*y_{j_1}-\eta_1^*z_{j_2})
\\
&\qquad\qquad
\times
\exp\Biggl(
\frac{y_{j_1}}{2}
\frac{\theta_1^*+u(\hat\theta_1-\theta_1^*)}{\theta_2^*+u(\hat\theta_2-\theta_2^*)}
+\frac{z_{j_2}}{2}
\frac{\eta_1^*+u(\hat\eta_1-\eta_1^*)}{\theta_2^*+u(\hat\theta_2-\theta_2^*)}
\Biggr)\dd u
\Biggr|^2
\\
&\qquad\times
mN^{2\gamma}
(|\hat\theta_1-\theta_1^*|^2+|\hat\eta_1-\eta_1^*|^2+|\hat\theta_2-\theta_2^*|^2)
\\
&=:\mathcal B_{1,i}'
\times 
mN^{2\gamma}
(|\hat\theta_1-\theta_1^*|^2+|\hat\eta_1-\eta_1^*|^2+|\hat\theta_2-\theta_2^*|^2).
\end{align*}
Set $\mathcal B_n'=s_{n,\epsilon}\sum_{i=0}^{n}\mathcal B_{1,i}'$. 
Note that for any $\epsilon_1, \epsilon_2>0$, 
\begin{align*}
P(|\mathcal B_n'|>\epsilon_2)
&\lesssim
\frac{s_{n,\epsilon}}{\epsilon_2 m N^{2\gamma}}
\sum_{i=0}^{n}
\frac{1}{M}\sum_{j_1=1}^{M_1}\sum_{j_2=1}^{M_2}
\EE[X_{\widetilde t_i}^2(y_{j_1},z_{j_2})]
+P(\Omega_{\epsilon_1}^{\mathrm c})
\nonumber
\\
&\lesssim
\frac{n s_{n,\epsilon}}{\epsilon_2 mN^{2\gamma}}
+P(\Omega_{\epsilon_1}^{\mathrm c}).
\end{align*}
It holds from \eqref{eq-0106}, 
$\frac{n s_{n,\epsilon}}{m N^{2\gamma}}\to0$ and Theorem \ref{th1} that 
\begin{equation*}
s_{n,\epsilon}\sum_{i=0}^{n}\mathcal B_{1,i}^2
=o_p(1).
\end{equation*}
Since 
\begin{equation*}
\EE[\mathcal B_{2,i}^2]
\lesssim
\sum_{j_1=1}^{M_1}\sum_{j_2=1}^{M_2}
\EE[X_{\widetilde t_i}^2(y_{j_1},z_{j_2})]
\iint_{D_{j_1,j_2}}
\biggl(\frac{1}{M_1^2}+\frac{1}{M_2^2}\biggr)
\dd y\dd z
\lesssim
\frac{1}{(M_1 \land M_2)^2},
\end{equation*}
we obtain that under
$\frac{n s_{n,\epsilon}}{(M_1 \land M_2)^{1+\tau_3}}\to0$, 
\begin{equation*}
s_{n,\epsilon}
\sum_{i=1}^{n}
\mathcal B_{2,i}^2
=O_p\biggl(
\frac{n s_{n,\epsilon}}{(M_1 \land M_2)^2}
\biggr)
=o_p(1).
\end{equation*}
Note that
\begin{equation*}
\mathcal B_{3,i}^2
\lesssim
\sum_{j_1=1}^{M_1}\sum_{j_2=1}^{M_2}\iint_{D_{j_1,j_2}}
\bigl(X_{\widetilde t_i}(y_{j_1},z_{j_2})-X_{\widetilde t_i}(y,z)\bigr)^2
\dd y\dd z.
\end{equation*}
It follows from \eqref{eq-0107} that under
$\frac{n s_{n,\epsilon}}{(M_1 \land M_2)^{1+\tau_3}}\to0$
and $\frac{n\epsilon^2 s_{n,\epsilon}}{(M_1 \land M_2)^{2\tau_4}}\to0$, 
\begin{equation*}
s_{n,\epsilon}
\sum_{i=1}^{n}
\mathcal B_{3,i}^2
=O_p\biggl(
\frac{n s_{n,\epsilon}}{(M_1 \land M_2)^{1+\tau_3}}
\biggr)
+O_p\biggl(
\frac{n\epsilon^2 s_{n,\epsilon}}{(M_1 \land M_2)^{2\tau_4}}
\biggr)
=o_p(1).
\end{equation*}
Therefore, we get the desired result.
\end{proof}

Let
\begin{align*}
\hat{\mathcal X}_n 
&=\sup_{\lambda}\sum_{i=1}^n
\bigl|\hat{\mathcal M}_i(\lambda)^2-\mathcal M_i(\lambda)^2\bigr|,
\\
\hat{\mathcal Y}_n 
&=\sup_{\lambda}\sum_{i=1}^n
\bigl|\hat{\mathcal M}_i(\lambda)\hat x_{1,1}(\widetilde t_{i-1})
-\mathcal M_i(\lambda) x_{1,1}(\widetilde t_{i-1})\bigr|,
\\
\hat{\mathcal Z}_n
&=\sum_{i=1}^n
\bigl|\hat x_{1,1}(\widetilde t_{i-1})^2-x_{1,1}(\widetilde t_{i-1})^2\bigr|.
\end{align*}
We also set
\begin{equation}\label{eq-0201}
\mathcal C_n
=\sup_{\lambda}\sum_{i=1}^n \mathcal M_i(\lambda)^2,
\quad
\mathcal D_n
=\sum_{i=1}^n x_{1,1}(\widetilde t_{i-1})^2.
\end{equation}
Noting that
\begin{align*}
&\hat{\mathcal M}_{i}(\lambda)^2-\mathcal M_{i}(\lambda)^2
\\
&=\{\hat{\mathcal M}_{i}(\lambda)-\mathcal M_{i}(\lambda)\}^2
+2\{\hat{\mathcal M}_{i}(\lambda)-\mathcal M_{i}(\lambda)\}
\mathcal M_{i}(\lambda),
\\
&\hat{\mathcal M}_i(\lambda)\hat x_{1,1}(\widetilde t_{i-1})
-\mathcal M_i(\lambda)x_{1,1}(\widetilde t_{i-1})
\\
&=\{\hat{\mathcal M}_i(\lambda)-\mathcal M_i(\lambda)\}
\{\hat x_{1,1}(\widetilde t_{i-1})-x_{1,1}(\widetilde t_{i-1})\}
\\
&\qquad+
\mathcal M_i(\lambda)\{\hat x_{1,1}(\widetilde t_{i-1})-x_{1,1}(\widetilde t_{i-1})\}
+\{\hat{\mathcal M}_i(\lambda)-\mathcal M_i(\lambda)\}
x_{1,1}(\widetilde t_{i-1}),
\\
&\hat x_{1,1}(\widetilde t_{i-1})^2-x_{1,1}(\widetilde t_{i-1})^2
\\
&=\{\hat x_{1,1}(\widetilde t_{i-1})-x_{1,1}(\widetilde t_{i-1})\}^2
+2\{\hat x_{1,1}(\widetilde t_{i-1})-x_{1,1}(\widetilde t_{i-1})\}
x_{1,1}(\widetilde t_{i-1}),
\end{align*}
we have that
\begin{align}
\hat{\mathcal X}_n
&\lesssim
\sup_{\lambda}\sum_{i=1}^n
\{\hat{\mathcal M}_i(\lambda)-\mathcal M_i(\lambda)\}^2
\nonumber
\\
&\qquad+
\biggl(
\sup_{\lambda}\sum_{i=1}^n
\{\hat{\mathcal M}_i(\lambda)-\mathcal M_i(\lambda)\}^2
\biggr)^{1/2}
\biggl( \sup_{\lambda}\sum_{i=1}^n \mathcal M_i(\lambda)^2 \biggr)^{1/2}
\nonumber
\\
&=
\hat{\mathcal A}_n+\hat{\mathcal A}_n^{1/2}\mathcal C_n^{1/2},
\label{eq-0202}
\\
\hat{\mathcal Y}_n
&\lesssim
\biggl(
\sup_{\lambda}\sum_{i=1}^n
\{\hat{\mathcal M}_i(\lambda)-\mathcal M_i(\lambda)\}^2
\biggr)^{1/2}
\biggl(
\sum_{i=1}^n \{\hat x_{1,1}(\widetilde t_{i-1})-x_{1,1}(\widetilde t_{i-1})\}^2
\biggr)^{1/2}
\nonumber
\\
&\qquad+
\biggl( \sup_{\lambda}\sum_{i=1}^n \mathcal M_i(\lambda)^2 \biggr)^{1/2}
\biggl( \sum_{i=1}^n  
\{\hat x_{1,1}(\widetilde t_{i-1})-x_{1,1}(\widetilde t_{i-1})\}^2
\biggr)^{1/2}
\nonumber
\\
&\qquad+
\biggl(
\sup_{\lambda}\sum_{i=1}^n
\{\hat{\mathcal M}_i(\lambda)-\mathcal M_i(\lambda) \}^2
\biggr)^{1/2}
\biggl( \sum_{i=1}^n x_{1,1}(\widetilde t_{i-1})^2 \biggr)^{1/2}
\nonumber
\\
&=
\hat{\mathcal A}_n^{1/2}\hat{\mathcal B}_n^{1/2}
+\hat{\mathcal B}_n^{1/2}\mathcal C_n^{1/2}
+\hat{\mathcal A}_n^{1/2} \mathcal D_n^{1/2},
\label{eq-0203}
\\
\hat{\mathcal Z}_n
&\lesssim
\sum_{i=1}^n \{\hat x_{1,1}(\widetilde t_{i-1})-x_{1,1}(\widetilde t_{i-1})\}^2
\nonumber
\\
&\qquad
+\biggl( \sum_{i=1}^n \{\hat x_{1,1}(\widetilde t_{i-1})-x_{1,1}(\widetilde t_{i-1})\}^2 
\biggr)^{1/2}
\biggl( \sum_{i=1}^n x_{1,1}(\widetilde t_{i-1})^2 \biggr)^{1/2}
\nonumber
\\
&=\hat{\mathcal B}_n+\hat{\mathcal B}_n^{1/2}\mathcal D_n^{1/2}.
\label{eq-0204}
\end{align}
For the evaluation of $\mathcal C_n$ and $\mathcal D_n$, 
it follows from 
\begin{equation*}
\mathcal M_i(\lambda)=\mathcal M_i(\lambda^*)
+(\ee^{-\lambda^*\Delta_n}-\ee^{-\lambda\Delta_n})x_{1,1}(\widetilde{t}_{i-1}),
\end{equation*}
$\EE[\mathcal M_i(\lambda^*)^2] \lesssim \epsilon^2\Delta_n$
and
$\sup_{\lambda}|\ee^{-\lambda^*\Delta_n}-\ee^{-\lambda\Delta_n}|\lesssim \Delta_n$
that
\begin{equation*}
\mathcal C_n=O_p(\epsilon^2 \lor n^{-1}),
\quad 
\mathcal D_n=O_p(n).
\end{equation*}

\begin{proof}[\bf Proof of Theorem \ref{th2}]
Let $F(s)=s/(1-\ee^{-s})$ and $F_n(\lambda)=\lambda^\alpha F(2\lambda\Delta_n)$.
The differences between 
the contrast function, the score function, and the observed information 
based on the approximate coordinate process 
$\{\hat x_{1,1}(\widetilde t_{i})\}_{i=1}^n$ 
and those based on the coordinate process
$\{x_{1,1}(\widetilde t_{i})\}_{i=1}^n$ are defined as follows.
\begin{align*}
\mathcal S_1(n,\epsilon)
&=\epsilon^{2}\sup_{\lambda}
\bigl|V_{n,\epsilon}^{(1)}(\lambda|\hat x_{1,1})
-V_{n,\epsilon}^{(1)}(\lambda|x_{1,1})\bigr|,
\\
\mathcal S_2(n,\epsilon)
&=\epsilon^2 \sup_{\lambda}
\bigl|\partial_\lambda^2 V_{n,\epsilon}^{(1)}(\lambda|\hat x_{1,1}) 
-\partial_\lambda^2 V_{n,\epsilon}^{(1)}(\lambda|x_{1,1})
\bigr|,
\\
\mathcal S_3(n,\epsilon)
&=\epsilon \sup_{\lambda}
\bigl|\partial_\lambda V_{n,\epsilon}^{(1)}(\lambda|\hat x_{1,1})
-\partial_\lambda V_{n,\epsilon}^{(1)}(\lambda|x_{1,1})
\bigr|,
\\
\mathcal T_1(n,\epsilon)
&=\frac{1}{n}\sup_{\lambda}
\bigl|V_{n,\epsilon}^{(1)}(\lambda|\hat x_{1,1})
-V_{n,\epsilon}^{(1)}(\lambda|x_{1,1})\bigr|,
\\
\mathcal T_2(n,\epsilon)
&=\frac{1}{n}\sup_{\lambda}
\bigl|\partial_\lambda^2 V_{n,\epsilon}^{(1)}(\lambda|\hat x_{1,1}) 
-\partial_\lambda^2 V_{n,\epsilon}^{(1)}(\lambda|x_{1,1})
\bigr|,
\\
\mathcal T_3(n,\epsilon)
&=\frac{1}{\sqrt n} \sup_{\lambda}
\bigl|\partial_\lambda V_{n,\epsilon}^{(1)}(\lambda|\hat x_{1,1})
-\partial_\lambda V_{n,\epsilon}^{(1)}(\lambda|x_{1,1})
\bigr|.
\end{align*}
Since $V_{n,\epsilon}^{(1)}(\lambda|x_{1,1})$ and its derivatives 
with respect to $\lambda$ up to the second order 
can be expressed as
\begin{align*}
V_{n,\epsilon}^{(1)}(\lambda|x_{1,1})
&=\frac{F_n(\lambda)}{\epsilon^2\Delta_n}
\sum_{i=1}^n \mathcal M_i(\lambda)^2
-n \log F_n(\lambda),
\\
\partial_\lambda V_{n,\epsilon}^{(1)}(\lambda|x_{1,1})
&=\frac{F_n'(\lambda)}{\epsilon^2\Delta_n}
\sum_{i=1}^n \mathcal M_i(\lambda)^2
+\frac{2\ee^{-\lambda\Delta_n}F_n(\lambda)}{\epsilon^2}
\sum_{i=1}^n \mathcal M_i(\lambda)x_{1,1}(\widetilde t_{i-1})
-n\partial_\lambda \log F_n(\lambda),
\\
\partial_\lambda^2 V_{n,\epsilon}^{(1)}(\lambda|x_{1,1})
&=\frac{F_n''(\lambda)}{\epsilon^2\Delta_n}
\sum_{i=1}^n \mathcal M_i(\lambda)^2
+\frac{4\ee^{-\lambda\Delta_n}F_n'(\lambda)}{\epsilon^2}
\sum_{i=1}^n \mathcal M_i(\lambda)x_{1,1}(\widetilde t_{i-1})
\\
&\qquad+
\frac{2\ee^{-2\lambda\Delta_n}F_n(\lambda)\Delta_n}{\epsilon^2}
\sum_{i=1}^n x_{1,1}(\widetilde t_{i-1})^2
-n\partial_\lambda^2 \log F_n(\lambda),
\end{align*}
and it follows that 
\begin{equation*}
F_n(\lambda)\to\lambda^\alpha,
\quad
F_n'(\lambda)
\to\frac{\alpha}{\lambda^{1-\alpha}},
\quad
F_n''(\lambda)
\to\frac{\alpha(\alpha-1)}{\lambda^{2-\alpha}}
\end{equation*}
uniformly in $\lambda$ as $n\to\infty$, one has that
\begin{align}
\sup_{\lambda}
\bigl|V_{n,\epsilon}^{(1)}(\lambda|\hat x_{1,1})
-V_{n,\epsilon}^{(1)}(\lambda,\mu|x_{1,1})\bigr|
&\lesssim
n\epsilon^{-2}\hat{\mathcal X}_n,
\label{eq-0301}
\\
\sup_{\lambda}
\bigl|\partial_\lambda V_{n,\epsilon}^{(1)}(\lambda|\hat x_{1,1})
-\partial_\lambda V_{n,\epsilon}^{(1)}(\lambda|x_{1,1})\bigr|
&\lesssim
n\epsilon^{-2}\hat{\mathcal X}_n+\epsilon^{-2}\hat{\mathcal Y}_n,
\label{eq-0302}
\\
\sup_{\lambda}
\bigl|\partial_\lambda^2 V_{n,\epsilon}^{(1)}(\lambda|\hat x_{1,1})
-\partial_\lambda^2 V_{n,\epsilon}^{(1)}(\lambda|x_{1,1})\bigr|
&\lesssim
n\epsilon^{-2}\hat{\mathcal X}_n+\epsilon^{-2}\hat{\mathcal Y}_n
+(n\epsilon^2)^{-1}\hat{\mathcal Z}_n.
\label{eq-0303}
\end{align}

(a) The consistency of $\hat\theta_0$ is obtained 
by the consistency of $\hat\lambda_{1,1}$ and Theorem \ref{th1}, 
and hence we prove that $\hat\lambda_{1,1}$ is consistent.
For the consistency of $\hat\lambda_{1,1}$, 
it is enough to show that under [B1] and [C1],
\begin{equation}\label{eq-0304}
\mathcal S_1(n,\epsilon)=o_p(1),
\end{equation}
or that under [B2] and [C2],
\begin{equation}\label{eq-0305}
\mathcal T_1(n,\epsilon)=o_p(1).
\end{equation}
Indeed, since it follows from \eqref{ap-eq-0301} 
in the proof of Theorem \ref{ap-th1} that under [B1],
\begin{equation}
\sup_{\lambda}
\bigl|
\epsilon^2\{V_{n,\epsilon}^{(1)}(\lambda|x_{1,1})
-V_{n,\epsilon}^{(1)}(\lambda_{1,1}^*|x_{1,1})\}
-V_{1,1}(\lambda,\lambda_{1,1}^*)
\bigr|=o_p(1),
\label{eq-0306}
\end{equation}
where
\begin{equation*}
V_{1,1}(\lambda,\lambda^*)
=\lambda^\alpha(\lambda-\lambda^*)^2
\frac{1-\ee^{-2\lambda^*}}{2\lambda^*}x_{1,1}(0)^2,
\end{equation*}
it holds from \eqref{eq-0304} and \eqref{eq-0306} that
\begin{align*}
&\sup_{\lambda}
\bigl|\epsilon^2\{V_{n,\epsilon}^{(1)}(\lambda|\hat x_{1,1})
-V_{n,\epsilon}^{(1)}(\lambda_{1,1}^*|\hat x_{1,1})\}
-V_{1,1}(\lambda,\lambda_{1,1}^*)\bigr|
\\
&\le
\sup_{\lambda}
\bigl|\epsilon^2\{V_{n,\epsilon}^{(1)}(\lambda|x_{1,1})
-V_{n,\epsilon}^{(1)}(\lambda_{1,1}^*|x_{1,1})\}
-V_{1,1}(\lambda,\lambda_{1,1}^*)\bigr|
+2\mathcal S_1(n,\epsilon)
\\
&=o_p(1),
\end{align*}
which induces $\hat\lambda_{1,1}\pto\lambda_{1,1}^*$.
Similarly, if \eqref{eq-0305} holds, 
then it follows from \eqref{eq-0305} and \eqref{ap-eq-0302} that 
\begin{equation*}
\sup_{\lambda}
\biggl|
\frac{1}{n}V_{n,\epsilon}^{(1)}(\lambda|\hat x_{1,1})
-V_2(\lambda,\lambda_{1,1}^*)
\biggr|=o_p(1),
\end{equation*}
where
\begin{equation*}
V_{1,2}(\lambda,\lambda^*)
=\biggl(\frac{\lambda}{\lambda^*}\biggr)^\alpha
-\log \lambda^\alpha
+c\lambda^\alpha(\lambda-\lambda^*)^2
\frac{1-\ee^{-2\lambda^*}}{2\lambda^*}x_{1,1}(0)^2
\end{equation*}
and hence $\hat\lambda_{1,1}$ is consistent.

\textit{Proof of \eqref{eq-0304}. }
Since it holds from \eqref{eq-0301} that
\begin{equation}\label{eq-0307}
\mathcal S_1(n,\epsilon) \lesssim \epsilon^2(n\epsilon^{-2} \hat{\mathcal X}_n)
=n\hat{\mathcal X}_n
\end{equation}
and from \eqref{eq-0202} and $\mathcal C_n=O_p(n^{-1})$ under [B1] that 
\begin{equation*}
n \hat{\mathcal X}_n
\lesssim
n(\hat{\mathcal A}_n
+\hat{\mathcal A}_n^{1/2}\mathcal C_n^{1/2})
=n\hat{\mathcal A}_n
+(n\hat{\mathcal A}_n)^{1/2}
(n\mathcal C_n)^{1/2},
\end{equation*}
it is sufficient to show that $n\hat{\mathcal A}_n=o_p(1)$ under [C1].
This is verified by \eqref{eq-0102}.

\textit{Proof of \eqref{eq-0305}. }
Noting that
\begin{equation}\label{eq-0308}
\mathcal T_1(n,\epsilon) \lesssim n^{-1}(n\epsilon^{-2}\hat{\mathcal X}_n)
=\epsilon^{-2} \hat{\mathcal X}_n,
\end{equation}
$\mathcal C_n=O_p(\epsilon^2)$ under [B2] and
\begin{equation*}
\epsilon^{-2} \hat{\mathcal X}_n
\lesssim
\epsilon^{-2}\hat{\mathcal A}_n
+(\epsilon^{-2}\hat{\mathcal A}_n)^{1/2}(\epsilon^{-2}\mathcal C_n)^{1/2},
\end{equation*}
we obtain the desired result  from \eqref{eq-0103}.

(b) For the asymptotic normality of $\hat\theta_0$, 
it is sufficient to prove that under [B1] and [C3],
\begin{equation}\label{eq-0309}
\mathcal S_1(n,\epsilon)=o_p(1),
\quad
\mathcal S_2(n,\epsilon)=o_p(1),
\quad
\mathcal S_3(n,\epsilon)=o_p(1),
\end{equation}
or that under [B2] and [C3],
\begin{equation}\label{eq-0310}
\mathcal T_1(n,\epsilon)=o_p(1),
\quad
\mathcal T_2(n,\epsilon)=o_p(1),
\quad
\mathcal T_3(n,\epsilon)=o_p(1).
\end{equation}
Indeed, since it follows from \eqref{ap-eq-0401}-\eqref{ap-eq-0403} 
in the proof of Theorem \ref{ap-th1} that under [B1],
\begin{equation}
\epsilon^2\partial_\lambda^2 V_{n,\epsilon}^{(1)}(\lambda_{1,1}^*|x_{1,1})
\pto 2 G_1(\lambda_{1,1}^*),
\label{eq-0311}
\end{equation}
\begin{equation}
\epsilon^2\sup_{|\lambda-\lambda_{1,1}^*|<\delta_{n,\epsilon}}
\bigl|\partial_\lambda^2 V_{n,\epsilon}^{(1)}(\lambda|x_{1,1})
-\partial_\lambda^2 V_{n,\epsilon}^{(1)}(\lambda_{1,1}^*|x_{1,1})\bigr|
=o_p(1)
\ \text{ for }\delta_{n,\epsilon}\to0,
\label{eq-0312}
\end{equation}
\begin{equation}
\epsilon \partial_\lambda V_{n,\epsilon}^{(1)}(\lambda_{1,1}^*|x_{1,1})
\dto N(0,4G_1(\lambda_{1,1}^*)),
\label{eq-0313}
\end{equation}
it holds from the consistency of $\hat\lambda_{1,1}$,
\eqref{eq-0309} and \eqref{eq-0311}-\eqref{eq-0313} that
\begin{equation*}
\epsilon^2\partial_\lambda^2 V_{n,\epsilon}^{(1)}(\lambda_{1,1}^*|\hat x_{1,1})
\pto 2 G_1(\lambda_{1,1}^*)
\end{equation*}
\begin{equation*}
\epsilon^2\sup_{|\lambda-\lambda_{1,1}^*|<\delta_{n,\epsilon}}
\bigl|\partial_\lambda^2 V_{n,\epsilon}^{(1)}(\lambda|\hat x_{1,1})
-\partial_\lambda^2 V_{n,\epsilon}^{(1)}(\lambda_{1,1}^*|\hat x_{1,1})\bigr|
=o_p(1)
\ \text{ for }\delta_{n,\epsilon}\to0,
\end{equation*}
\begin{equation*}
\epsilon \partial_\lambda V_{n,\epsilon}^{(1)}(\lambda_{1,1}^*|\hat x_{1,1})
\dto N(0,4G_1(\lambda_{1,1}^*)),
\end{equation*}
which yield
\begin{equation*}
\epsilon^{-1}(\hat\lambda_{1,1}-\lambda_{1,1}^*)
\dto N(0,G_1(\lambda_{1,1}^*)^{-1}).
\end{equation*}
Hence, one has from 
$\theta_0=-\lambda_{1,1}+(\theta_1^2+\eta_1^2)/4\theta_2+2\pi^2\theta_2$
and Theorem \ref{th1} that under [C3],
\begin{align*}
\epsilon^{-1}(\hat\theta_0-\theta_0^*)
&=\epsilon^{-1}
\Biggl\{
-\hat\lambda_{1,1}
+\frac{\hat\theta_1^2+\hat\eta_1^2}{4\hat\theta_2}+2\pi^2\hat\theta_2
-\biggl(
-\lambda_{1,1}^*
+\frac{(\theta_1^*)^2+(\eta_1^*)^2}{4\theta_2^*}+2\pi^2\theta_2^*
\biggr)
\Biggr\}
\\
&=-\epsilon^{-1}(\hat\lambda_{1,1}-\lambda_{1,1}^*)
\\
&\qquad+
\frac{\epsilon^{-1}}{m^{1/2}N^\gamma}
m^{1/2}N^\gamma
\Biggl(
\frac{\hat\theta_1^2+\hat\eta_1^2}{4\hat\theta_2}+2\pi^2\hat\theta_2
-\frac{(\theta_1^*)^2+(\eta_1^*)^2}{4\theta_2^*}-2\pi^2\theta_2^*
\Biggr)
\\
&=-\epsilon^{-1}
(\hat\lambda_{1,1}-\lambda_{1,1}^*)
+o_p(1)
\\
&\dto N(0,G_1(\lambda_{1,1}^*)^{-1}),
\end{align*}
and the desired result can be obtained. 
Similarly, if \eqref{eq-0310} holds, then it follows 
from the consistency of $\hat\lambda_{1,1}$,
\eqref{ap-eq-0501}-\eqref{ap-eq-0503} and Theorem \ref{th1} that
\begin{equation*}
\sqrt n(\hat\theta_0-\theta_0^*)\dto N(0,I_1(\lambda_{1,1}^*)^{-1}).
\end{equation*}

\textit{Proof of \eqref{eq-0309}. }
It follows from \eqref{eq-0302} and \eqref{eq-0303} that
\begin{align}
\mathcal S_2(n,\epsilon)
&\lesssim
\epsilon^2
\{n\epsilon^{-2}\hat{\mathcal X}_n+\epsilon^{-2}\hat{\mathcal Y}_n
+(n\epsilon^2)^{-1}\hat{\mathcal Z}_n\}
=n \hat{\mathcal X}_n+\hat{\mathcal Y}_n+n^{-1}\hat{\mathcal Z}_n,
\label{eq-0314}
\\
\mathcal S_3(n,\epsilon)
&\lesssim
\epsilon(n\epsilon^{-2}\hat{\mathcal X}_n+\epsilon^{-2}\hat{\mathcal Y}_n)
=
n\epsilon^{-1}\hat{\mathcal X}_n+\epsilon^{-1}\hat{\mathcal Y}_n.
\label{eq-0315}
\end{align}
From \eqref{eq-0307}, \eqref{eq-0314}, \eqref{eq-0315},
$n \lor n\epsilon^{-1}=n\epsilon^{-1}$
and $1 \lor \epsilon^{-1}=\epsilon^{-1}$,
it suffices to show that under [B1] and [C3],
$n\epsilon^{-1}\hat{\mathcal X}_n=o_p(1)$, $\epsilon^{-1}\hat{\mathcal Y}_n=o_p(1)$
and $n^{-1} \hat{\mathcal Z}_n=o_p(1)$.
Since it holds from \eqref{eq-0202}-\eqref{eq-0204} and $\mathcal D_n=O_p(n)$ 
that under [B1],
\begin{align*}
n\epsilon^{-1}\hat{\mathcal X}_n
&\lesssim
n\epsilon^{-1}(\hat{\mathcal A}_n
+\hat{\mathcal A}_n^{1/2}\mathcal C_n^{1/2})
=n\epsilon^{-1}\hat{\mathcal A}_n+(n\epsilon^{-2}\hat{\mathcal A}_n)^{1/2}
(n\mathcal C_n)^{1/2},
\\
\epsilon^{-1}\hat{\mathcal Y}_n
&\lesssim
\epsilon^{-1}
(\hat{\mathcal A}_n^{1/2}\hat{\mathcal B}_n^{1/2}
+\hat{\mathcal B}_n^{1/2}\mathcal C_n^{1/2}
+\hat{\mathcal A}_n^{1/2}\mathcal D_n^{1/2})
\nonumber
\\
&=(n\epsilon^{-2}\hat{\mathcal A}_n)^{1/2}(n^{-1}\hat{\mathcal B}_n)^{1/2}
+\{(n\epsilon^2)^{-1}\hat{\mathcal B}_n\}^{1/2}(n\mathcal C_n)^{1/2}
+(n\epsilon^{-2}\hat{\mathcal A}_n)^{1/2}(n^{-1}\mathcal D_n)^{1/2},
\\
n^{-1} \hat{\mathcal Z}_n
&\lesssim
n^{-1}(\hat{\mathcal B}_n+\hat{\mathcal B}_n^{1/2}\mathcal D_n^{1/2})
=
n^{-1} \hat{\mathcal B}_n
+(n^{-1}\hat{\mathcal B}_n)^{1/2}(n^{-1}\mathcal D_n)^{1/2},
\end{align*}
and then it follows that $n\epsilon^{-1} \lor n\epsilon^{-2}=n\epsilon^{-2}$
and $n^{-1} \lor (n\epsilon^2)^{-1}=(n\epsilon^2)^{-1}$,
it suffices to prove that under [C3],
\begin{equation*}
n\epsilon^{-2}\hat{\mathcal A}_n=o_p(1),
\quad
(n\epsilon^2)^{-1}\hat{\mathcal B}_n=o_p(1),
\end{equation*}
which can be obtained from \eqref{eq-0104} and \eqref{eq-0105}.

\textit{Proof of \eqref{eq-0310}.}
By using \eqref{eq-0308} and the estimates that
\begin{align*}
\mathcal T_2(n,\epsilon)
&\lesssim
n^{-1}\{
n\epsilon^{-2}\hat{\mathcal X}_n+\epsilon^{-2}\hat{\mathcal Y}_n
+(n\epsilon^2)^{-1}\hat{\mathcal Z}_n
\}
=
\epsilon^{-2}\hat{\mathcal X}_n
+(n\epsilon^2)^{-1}\hat{\mathcal Y}_n
+(n\epsilon)^{-2}\hat{\mathcal Z}_n,
\\
\mathcal T_3(n,\epsilon)
&\lesssim
n^{-1/2}
(n\epsilon^{-2}\hat{\mathcal X}_n+\epsilon^{-2}\hat{\mathcal Y}_n)
=n^{1/2}\epsilon^{-2}\hat{\mathcal X}_n+(n^{1/2}\epsilon^2)^{-1} \hat{\mathcal Y}_n,
\end{align*}
$\epsilon^{-2} \lor n^{1/2}\epsilon^{-2}=n^{1/2}\epsilon^{-2}$
and $(n\epsilon^2)^{-1} \lor (n^{1/2}\epsilon^2)^{-1}=(n^{1/2}\epsilon^2)^{-1}$,
the proof of \eqref{eq-0310} is reduced to showing that
$n^{1/2}\epsilon^{-2}\hat{\mathcal X}_n=o_p(1)$,
$(n^{1/2}\epsilon^2)^{-1}\hat{\mathcal Y}_n=o_p(1)$
and $(n\epsilon)^{-2}\hat{\mathcal Z}_n=o_p(1)$
under [B2] and [C3]. 
These are shown by the following evaluations 
\begin{align*}
n^{1/2}\epsilon^{-2}\hat{\mathcal X}_n
&\lesssim
n^{1/2}\epsilon^{-2}\hat{\mathcal A}_n
+(n\epsilon^{-2}\hat{\mathcal A}_n)^{1/2}(\epsilon^{-2}\mathcal C_n)^{1/2},
\\
(n^{1/2}\epsilon^2)^{-1} \hat{\mathcal Y}_n
&\lesssim
(\epsilon^{-2}\hat{\mathcal A}_n)^{1/2}
\{(n\epsilon^2)^{-1}\hat{\mathcal B}_n\}^{1/2}
+\{(n\epsilon^2)^{-1}\hat{\mathcal B}_n\}^{1/2}(\epsilon^{-2}\mathcal C_n)^{1/2}
\\
&\qquad+(\epsilon^{-4}\hat{\mathcal A}_n)^{1/2}(n^{-1}\mathcal D_n)^{1/2},
\\
(n\epsilon)^{-2}\hat{\mathcal Z}_n
&\lesssim
(n\epsilon)^{-2} \hat{\mathcal A}_n
+\{(n^3\epsilon^4)^{-1}\hat{\mathcal A}_n\}^{1/2}(n^{-1}\mathcal C_n)^{1/2},
\end{align*}
\begin{equation*}
n^{1/2}\epsilon^{-2} \lor n\epsilon^{-2} \lor 
\epsilon^{-2}  \lor \epsilon^{-4} \lor 
(n\epsilon)^{-2} \lor (n^3\epsilon^4)^{-1}
=O(n\epsilon^{-2})
\end{equation*}
under [B2] and the properties
\begin{equation*}
n\epsilon^{-2}\hat{\mathcal A}_n=o_p(1),
\quad 
(n\epsilon^2)^{-1}\hat{\mathcal B}_n=o_p(1),
\end{equation*}
which are obtained from \eqref{eq-0104} and \eqref{eq-0105}.
\end{proof}

\subsection{Proofs of Proposition \ref{prop2}, Theorems \ref{th3} and \ref{th4}}
\label{sec5.2}
In this subsection, we give proofs of our results  
in Subsection \ref{sec3.2}.

\begin{proof}[\bf{Proof of Proposition \ref{prop2}}]
By setting $B_{i,\bs k}^{Q_2}=B_{1,i,\bs k}^{Q_2}+B_{2,i,\bs k}^{Q_2}$, where
\begin{align*}
B_{1,i,\bs k}^{Q_2}
&=
-\frac{\epsilon(1-\ee^{-\lambda_{\bs k}\Delta_N})}{\mu_{\bs k}^{\alpha/2}}
\int_0^{(i-1)\Delta_N}\ee^{-\lambda_{\bs k}((i-1)\Delta_N-s)}\dd w_{\bs k}(s),
\\
B_{2,i,\bs k}^{Q_2}
&=
\frac{\epsilon}{\mu_{\bs k}^{\alpha/2}}
\int_{(i-1)\Delta_N}^{i\Delta_N}
\ee^{-\lambda_{\bs k}(i\Delta_N-s)}\dd w_{\bs k}(s),
\end{align*}
it holds that uniformly in $(y,z)\in D_\delta$,
\begin{equation*}
\sum_{\bs k,\bs \ell\in\mathbb N^2}
\EE[B_{i,\bs k}^{Q_2} B_{i,\bs \ell}^{Q_2}]e_{\bs k}(y,z)e_{\bs \ell}(y,z)
=\epsilon^2\biggl\{
\Delta_N^{\alpha}\frac{\Gamma(1-\alpha)}{4\pi\alpha\theta_2^{1-\alpha}}
\exp\biggl(
-\frac{\theta_1}{\theta_2}y
\biggr)
\exp\biggl(
-\frac{\eta_1}{\theta_2}z
\biggr)
+r_{N,i}+O(\Delta_N)\biggr\},
\end{equation*}
where $\sum_{i=1}^N |r_{N,i}|=O(\Delta_N^{\beta})$ for any $\beta\in(0,1)$
similar to  Lemma \ref{lem2} and Lemma 5.6 in Tonaki et al. \cite{TKU2022arXiv}.
Therefore, it can be proved 
in an analogous way to the proof of Proposition \ref{prop1}.
\end{proof}

\begin{proof}[\bf Proof of Theorem \ref{th3}]
It can be shown in the same way as the proof of Theorem \ref{th1}. 
\end{proof}

Let $x_{\bs k}=x_{\bs k}^{Q_2}$ and 
$\tilde x_{\bs k}=\tilde x_{\bs k}^{Q_2}$.
In a similar way to Subsection \ref{sec5.1}, we set 
\begin{equation*}
\tilde{\mathcal M}_{i}(\lambda)
=\tilde x_{1,1}(\widetilde t_i)
-\ee^{-\lambda\Delta_n} \tilde x_{1,1}(\widetilde t_{i-1}),
\end{equation*}
\begin{equation*}
\tilde{\mathcal A}_n
=\sup_{\lambda}\sum_{i=1}^n
\{\tilde{\mathcal M}_i(\lambda)-\mathcal M_i(\lambda)\}^2,
\quad
\tilde{\mathcal B}_n
=\sum_{i=1}^n
\{\tilde x_{1,1}(\widetilde t_{i-1})-x_{1,1}(\widetilde t_{i-1})\}^2,
\end{equation*}
\begin{equation*}
\tilde{\mathcal X}_n 
=\sup_{\lambda}\sum_{i=1}^n
\bigl|\tilde{\mathcal M}_i(\lambda)^2-\mathcal M_i(\lambda)^2\bigr|,
\quad
\tilde{\mathcal Y}_n 
=\sup_{\lambda}\sum_{i=1}^n
\bigl|\tilde{\mathcal M}_i(\lambda)\tilde x_{1,1}(\widetilde t_{i-1})
-\mathcal M_i(\lambda) x_{1,1}(\widetilde t_{i-1})\bigr|,
\end{equation*}
\begin{equation*}
\tilde{\mathcal Z}_n
=\sum_{i=1}^n
\bigl|\tilde x_{1,1}(\widetilde t_{i-1})^2-x_{1,1}(\widetilde t_{i-1})^2\bigr|,
\end{equation*}
and obtain the following evaluations
\begin{equation}\label{eq-0401}
\tilde{\mathcal X}_n
\lesssim
\tilde{\mathcal A}_n+\tilde{\mathcal A}_n^{1/2}\mathcal C_n^{1/2},
\quad
\tilde{\mathcal Y}_n
\lesssim
\tilde{\mathcal A}_n^{1/2}\tilde{\mathcal B}_n^{1/2}
+\tilde{\mathcal B}_n^{1/2}\mathcal C_n^{1/2}
+\tilde{\mathcal A}_n^{1/2}\mathcal D_n^{1/2},
\quad
\tilde{\mathcal Z}_n
\lesssim
\tilde{\mathcal B}_n+\tilde{\mathcal B}_n^{1/2}\mathcal D_n^{1/2},
\end{equation}
where $\mathcal M_i(\lambda)$, $\mathcal C_n$ and $\mathcal D_n$ are given by 
\eqref{eq-0101} and \eqref{eq-0201}.

We set the following condition.
\begin{enumerate}
\item[\textbf{[C3$\boldsymbol{''}$]}]
$\frac{n}{m N^{2\gamma}}\to0$,
$\frac{n}{(M_1 \land M_2)^{1+\tau_3}}\to0$ 
and $\frac{n\epsilon^2}{(M_1 \land M_2)^{2\tau_4}}\to0$
for some $\tau_3\in[0,1)$ and $\tau_4\in[0,\alpha)$.
\end{enumerate}
In an analogous way to the proof of Lemma \ref{lem3}, the followings hold.
\begin{align}
n\tilde{\mathcal A}_n&=o_p(1)\quad \text{under [C1]},
\label{eq-0402}
\\
n\epsilon^{-2} \tilde{\mathcal A}_n&=o_p(1)\quad 
\text{under [C3]},
\label{eq-0403}
\\
(n\epsilon)^2\tilde{\mathcal A}_n&=o_p(1)\quad \text{under [C4]},
\label{eq-0404}
\\
(n\epsilon^4)^{-1}\tilde{\mathcal A}_n&=o_p(1)\quad \text{under [C5]},
\label{eq-0405}
\\
(n\epsilon^2)^{-1}\tilde{\mathcal B}_n&=o_p(1)\quad 
\text{under [C3]},
\label{eq-0406}
\\
\tilde{\mathcal B}_n&=o_p(1)\quad \text{under [C3$''$]}.
\nonumber
\end{align}
Note that if [C3] holds, then [C3$''$] is satisfied. It follows that 
\begin{equation}
\tilde{\mathcal B}_n=o_p(1)\quad \text{under [C3]}.
\label{eq-0407}
\end{equation}

\begin{proof}[\bf Proof of Theorem \ref{th4}]
(2) Let $F_n(\lambda,\mu)=\mu^\alpha F(2\lambda\Delta_n)$,
\begin{align*}
C_{n,\epsilon}^{(2)}(\lambda,\mu|x)
&=
\begin{pmatrix}
\epsilon^2 \partial_\lambda^2 V_{n,\epsilon}^{(2)}(\lambda,\mu|x) 
& \frac{\epsilon}{\sqrt n}
\partial_\lambda \partial_\mu V_{n,\epsilon}^{(2)}(\lambda,\mu|x)
\\
\frac{\epsilon}{\sqrt n}
\partial_\mu \partial_\lambda V_{n,\epsilon}^{(2)}(\lambda,\mu|x)
& \frac{1}{n} \partial_\mu^2 V_{n,\epsilon}^{(2)}(\lambda,\mu|x)
\end{pmatrix},
\\
K_{n,\epsilon}^{(2)}(\lambda,\mu|x)
&=
\begin{pmatrix}
-\epsilon\partial_\lambda V_{n,\epsilon}^{(2)}(\lambda,\mu|x)
\\
-\frac{1}{\sqrt n}\partial_\mu V_{n,\epsilon}^{(2)}(\lambda,\mu|x)
\end{pmatrix}.
\end{align*}
We set the difference between 
$V_{n,\epsilon}^{(2)}$, $C_{n,\epsilon}^{(2)}$, and $K_{n,\epsilon}^{(2)}$ 
based on the approximate coordinate process 
$\{\tilde x_{1,1}(\widetilde t_i)\}_{i=1}^n$ 
and those based on the coordinate process $\{x_{1,1}(\widetilde t_i)\}_{i=1}^n$ as
\begin{align*}
\mathcal U_1(n,\epsilon)
&=\epsilon^{2}\sup_{\lambda,\mu}
\bigl|V_{n,\epsilon}^{(2)}(\lambda,\mu|\tilde x_{1,1})
-V_{n,\epsilon}^{(2)}(\lambda,\mu|x_{1,1})\bigr|,
\\
\mathcal U_2(n,\epsilon)
&=\frac{1}{n}\sup_{\lambda,\mu}
\bigl|V_{n,\epsilon}^{(2)}(\lambda,\mu|\tilde x_{1,1})
-V_{n,\epsilon}^{(2)}(\lambda,\mu|x_{1,1})\bigr|,
\\
\mathcal U_3(n,\epsilon)
&=\sup_{\lambda,\mu}
\bigl|C_{n,\epsilon}^{(2)}(\lambda,\mu|\tilde x_{1,1})
-C_{n,\epsilon}^{(2)}(\lambda,\mu|x_{1,1})\bigr|,
\\
\mathcal U_4(n,\epsilon)
&=\sup_{\lambda,\mu}
\bigl|K_{n,\epsilon}^{(2)}(\lambda,\mu|\tilde x_{1,1})
-K_{n,\epsilon}^{(2)}(\lambda,\mu|x_{1,1})\bigr|.
\end{align*}
Noting that
\begin{align*}
V_{n,\epsilon}^{(2)}(\lambda,\mu|x)
&=\frac{F_n(\lambda,\mu)}{\epsilon^2\Delta_n}
\sum_{i=1}^n \mathcal M_i(\lambda)^2
-n \log F_n(\lambda,\mu),
\\
\partial_\lambda V_{n,\epsilon}^{(2)}(\lambda,\mu|x)
&=\frac{\partial_\lambda F_n(\lambda,\mu)}{\epsilon^2\Delta_n}
\sum_{i=1}^n \mathcal M_i(\lambda)^2
+\frac{2\ee^{-\lambda\Delta_n}F_n(\lambda,\mu)}{\epsilon^2}
\sum_{i=1}^n \mathcal M_i(\lambda)x(t_{i-1})
\\
&\qquad-n \partial_\lambda \log F_n(\lambda,\mu),
\\
\partial_\mu V_{n,\epsilon}^{(2)}(\lambda,\mu|x)
&=\frac{\partial_\mu F_n(\lambda,\mu)}{\epsilon^2\Delta_n}
\sum_{i=1}^n \mathcal M_i(\lambda)^2
-n \partial_\mu \log F_n(\lambda,\mu),
\\
\partial_\lambda^2 V_{n,\epsilon}^{(2)}(\lambda,\mu|x)
&=\frac{\partial_\lambda^2 F_n(\lambda,\mu)}{\epsilon^2\Delta_n}
\sum_{i=1}^n \mathcal M_i(\lambda)^2
+\frac{4\ee^{-\lambda\Delta_n}\partial_\lambda F_n(\lambda,\mu)}{\epsilon^2}
\sum_{i=1}^n \mathcal M_i(\lambda)x(t_{i-1})
\\
&\qquad+
\frac{2\ee^{-2\lambda\Delta_n}F_n(\lambda,\mu)\Delta_n}{\epsilon^2}
\sum_{i=1}^n x(t_{i-1})^2
-n \partial_\lambda^2 \log F_n(\lambda,\mu),
\\
\partial_\mu^2 V_{n,\epsilon}^{(2)}(\lambda,\mu|x)
&=\frac{\partial_\mu^2 F_n(\lambda,\mu)}{\epsilon^2\Delta_n}
\sum_{i=1}^n \mathcal M_i(\lambda)^2
-n \partial_\mu^2 \log F_n(\lambda,\mu),
\\
\partial_\mu\partial_\lambda V_{n,\epsilon}^{(2)}(\lambda,\mu|x)
&=\frac{\partial_\mu\partial_\lambda F_n(\lambda,\mu)}{\epsilon^2\Delta_n}
\sum_{i=1}^n \mathcal M_i(\lambda)^2
+\frac{2\ee^{-\lambda\Delta_n}\partial_\mu F_n(\lambda,\mu)}{\epsilon^2}
\sum_{i=1}^n \mathcal M_i(\lambda)x(t_{i-1})
\\
&\qquad
-n \partial_\mu \partial_\lambda \log F_n(\lambda,\mu),
\end{align*}
\begin{align*}
F_n(\lambda,\mu) &\to \mu^\alpha,
\quad
\Delta_n^{-1}\partial_\lambda F_n(\lambda,\mu) \to \mu^\alpha,
\quad
\partial_\mu F_n(\lambda,\mu) \to \alpha\mu^{\alpha-1},
\\
\Delta_n^{-2}\partial_\lambda^2 F_n(\lambda,\mu) &\to \frac{2\mu^\alpha}{3},
\quad
\Delta_n^{-1}\partial_\mu\partial_\lambda F_n(\lambda,\mu)
\to \alpha\mu^{\alpha-1},
\quad
\partial_\mu^2 F_n(\lambda,\mu) \to \alpha(\alpha-1)\mu^{\alpha-2},
\end{align*}
uniformly in $(\lambda,\mu)$ as $n\to\infty$, we have that
\begin{align}
\sup_{\lambda,\mu}
\bigl|V_{n,\epsilon}^{(2)}(\lambda,\mu|\tilde x_{1,1})
-V_{n,\epsilon}^{(2)}(\lambda,\mu|x_{1,1})\bigr|
&\lesssim
n\epsilon^{-2}\tilde{\mathcal X}_n,
\label{eq-0501}
\\
\sup_{\lambda,\mu}
\bigl|\partial_\lambda V_{n,\epsilon}^{(2)}(\lambda,\mu|\tilde x_{1,1})
-\partial_\lambda V_{n,\epsilon}^{(2)}(\lambda,\mu|x_{1,1})\bigr|
&\lesssim
\epsilon^{-2}(\tilde{\mathcal X}_n+\tilde{\mathcal Y}_n),
\label{eq-0502}
\\
\sup_{\lambda,\mu}
\bigl|\partial_\mu V_{n,\epsilon}^{(2)}(\lambda,\mu|\tilde x_{1,1})
-\partial_\mu V_{n,\epsilon}^{(2)}(\lambda,\mu|x_{1,1})\bigr|
&\lesssim
n\epsilon^{-2}\tilde{\mathcal X}_n,
\label{eq-0503}
\\
\sup_{\lambda,\mu}
\bigl|\partial_\lambda^2 V_{n,\epsilon}^{(2)}(\lambda,\mu|\tilde x_{1,1})
-\partial_\lambda^2 V_{n,\epsilon}^{(2)}(\lambda,\mu|x_{1,1})\bigr|
&\lesssim
(n\epsilon^2)^{-1}(\tilde{\mathcal X}_n+\tilde{\mathcal Y}_n+\tilde{\mathcal Z}_n),
\label{eq-0504}
\\
\sup_{\lambda,\mu}
\bigl|\partial_\mu^2 V_{n,\epsilon}^{(2)}(\lambda,\mu|\tilde x_{1,1})
-\partial_\mu^2 V_{n,\epsilon}^{(2)}(\lambda,\mu|x_{1,1})\bigr|
&\lesssim
n\epsilon^{-2}\tilde{\mathcal X}_n,
\label{eq-0505}
\\
\sup_{\lambda,\mu}
\bigl|\partial_\mu\partial_\lambda V_{n,\epsilon}^{(2)}(\lambda,\mu|\tilde x_{1,1})
-\partial_\mu\partial_\lambda V_{n,\epsilon}^{(2)}(\lambda,\mu|x_{1,1})\bigr|
&\lesssim
\epsilon^{-2}(\tilde{\mathcal X}_n+\tilde{\mathcal Y}_n).
\label{eq-0506}
\end{align}

(a) For proving the consistency of the estimators $\tilde\theta_0$ and $\tilde\mu_0$,
it is enough to show that under [C4] and [C5],
\begin{equation}\label{eq-0507}
\mathcal U_1(n,\epsilon)=o_p(1),
\quad
\mathcal U_2(n,\epsilon)=o_p(1).
\end{equation}

\textit{Proof of \eqref{eq-0507}.}
By using \eqref{eq-0501} and the fact that
\begin{equation}\label{eq-0508}
\mathcal U_1(n,\epsilon) \lor \mathcal U_2(n,\epsilon)
\lesssim
(\epsilon^2 \lor n^{-1})(n\epsilon^{-2}\tilde{\mathcal X}_n)
=(n\lor \epsilon^{-2})\tilde{\mathcal X}_n,
\end{equation}
it is sufficient to show that
$(n \lor \epsilon^{-2}) \tilde{\mathcal X}_n=o_p(1)$ under [C4] and [C5]. 
Since it holds from \eqref{eq-0401} and 
$\mathcal C_n=O_p(\epsilon^2\lor n^{-1})$ that
\begin{equation}\label{eq-0509}
(n\lor \epsilon^{-2})\tilde{\mathcal X}_n
\lesssim
(n\lor \epsilon^{-2})\tilde{\mathcal A}_n
+\{(n\lor \epsilon^{-2})^2(\epsilon^2 \lor n^{-1})\tilde{\mathcal A}_n\}^{1/2}
\{(\epsilon^2 \lor n^{-1})^{-1} \mathcal C_n\}^{1/2},
\end{equation}
\eqref{eq-0507} is obtained from
$n \lor \epsilon^{-2} \lor (n\lor \epsilon^{-2})^2(\epsilon^2 \lor n^{-1})
=(n\epsilon^4)^{-1} \lor (n\epsilon)^2$, 
\eqref{eq-0404} and \eqref{eq-0405}.

(b) Note that 
$\theta_0=-\lambda_{1,1}+(\theta_1^2+\eta_1^2)/4\theta_2+2\pi^2\theta_2$,
$\mu_0=\mu_{1,1}-2\pi^2$.
It holds from Theorem \ref{th3} that under [C3],
\begin{equation*}
\begin{pmatrix}
\epsilon^{-1}(\tilde\theta_0-\theta_0^*)
\\
\sqrt n(\tilde\mu_0-\mu_0^*)
\end{pmatrix}
=
\begin{pmatrix}
-1 & 0\\
0 & 1
\end{pmatrix}
\begin{pmatrix}
\epsilon^{-1}(\tilde\lambda_{1,1}-\lambda_{1,1}^*)
\\
\sqrt n(\tilde\mu_{1,1}-\mu_{1,1}^*)
\end{pmatrix}
+o_p(1).
\end{equation*}
Therefore, 
for the asymptotic normality of the estimators $\tilde\theta_0$ and $\tilde\mu_0$,
it suffices to prove that under [C3]-[C5],
\begin{equation}\label{eq-0510}
\mathcal U_1(n,\epsilon)=o_p(1), 
\quad
\mathcal U_2(n,\epsilon)=o_p(1),
\quad
\mathcal U_3(n,\epsilon)=o_p(1),
\quad
\mathcal U_4(n,\epsilon)=o_p(1).
\end{equation}

\textit{Proof of \eqref{eq-0510}. }
According to \eqref{eq-0501}-\eqref{eq-0506}, it follows that \eqref{eq-0508}, 
\begin{align*}
\mathcal U_3(n,\epsilon)
&\lesssim
\epsilon^2\{(n\epsilon^2)^{-1}(\tilde{\mathcal X}_n
+\tilde{\mathcal Y}_n+\tilde{\mathcal Z}_n)\}
+n^{-1}(n\epsilon^{-2}\tilde{\mathcal X}_n)
+\epsilon n^{-1/2}\{\epsilon^{-2}(\tilde{\mathcal X}_n+\tilde{\mathcal Y}_n)\}
\\
&\lesssim
\{n^{-1} \lor \epsilon^{-2} \lor (n^{1/2}\epsilon)^{-1}\}
\tilde{\mathcal X}_n
+\{n^{-1} \lor (n^{1/2}\epsilon)^{-1}\} \tilde{\mathcal Y}_n
+n^{-1} \tilde{\mathcal Z}_n,
\\
\mathcal U_4(n,\epsilon)
&\lesssim
\epsilon\{\epsilon^{-2}(\tilde{\mathcal X}_n+\tilde{\mathcal Y}_n)\}
+n^{-1/2}(n\epsilon^{-2}\tilde{\mathcal X}_n)
\\
&\lesssim
\{\epsilon^{-1} \lor (n^{1/2}\epsilon^2)^{-1}\} \tilde{\mathcal X}_n
+\epsilon^{-1} \tilde{\mathcal Y}_n.
\end{align*}
Since
$n \lor \epsilon^{-2} \lor n^{-1} 
\lor (n^{1/2}\epsilon)^{-1} \lor \epsilon^{-1}
\lor (n^{1/2}\epsilon^2)^{-1}
=n \lor \epsilon^{-2}$
and $n^{-1} \lor (n^{1/2}\epsilon)^{-1} \lor \epsilon^{-1}=\epsilon^{-1}$,
the proof of \eqref{eq-0510} is reduced to showing that under [C3]-[C5],
\begin{equation*}
(n \lor \epsilon^{-2})\tilde{\mathcal X}_n=o_p(1), 
\quad
\epsilon^{-1}\tilde{\mathcal Y}_n=o_p(1),
\quad
n^{-1} \tilde{\mathcal Z}_n=o_p(1).
\end{equation*}
These can be derived from \eqref{eq-0509},
the following evaluations
\begin{align*}
\epsilon^{-1}\tilde{\mathcal Y}_n
&\lesssim
(\epsilon^{-2}\tilde{\mathcal A}_n)^{1/2}\tilde{\mathcal B}_n^{1/2}
+\{(1 \lor (n\epsilon^2)^{-1})\tilde{\mathcal B}_n\}^{1/2}
\{(\epsilon^2 \lor n^{-1})^{-1} \mathcal C_n\}^{1/2}
\\
&\qquad
+(n\epsilon^{-2}\tilde{\mathcal A}_n)^{1/2}(n^{-1}\mathcal D_n)^{1/2},
\\
n^{-1} \tilde{\mathcal Z}_n
&\lesssim
n^{-1} \tilde{\mathcal B}_n
+(n^{-1}\tilde{\mathcal B}_n)^{1/2}(n^{-1}\mathcal D_n)^{1/2},
\end{align*}
\begin{equation*}
(n\epsilon^4)^{-1} \lor (n\epsilon)^2 
\lor \epsilon^{-2} \lor n\epsilon^{-2}
=(n\epsilon^4)^{-1} \lor (n\epsilon)^2 \lor n\epsilon^{-2},
\quad
1 \lor (n\epsilon^2)^{-1} \lor n^{-1}=1 \lor (n\epsilon^2)^{-1},
\end{equation*}
and the properties that under [C3]-[C5], 
\begin{equation*}
\{(n\epsilon^4)^{-1} \lor (n\epsilon)^2 \lor n\epsilon^{-2}\} 
\tilde{\mathcal A}_n=o_p(1),
\quad
\{1 \lor (n\epsilon^2)^{-1}\} \tilde{\mathcal B}_n=o_p(1),
\end{equation*}
which are revealed by \eqref{eq-0403}-\eqref{eq-0407}.

(1) Set
\begin{align*}
\mathcal V_1(n,\epsilon)
&=\epsilon^{2}\sup_{\lambda}
\bigl|V_{n,\epsilon}^{(2)}(\lambda,\mu_{1,1}|\check x_{1,1})
-V_{n,\epsilon}^{(2)}(\lambda,\mu_{1,1}|x_{1,1})\bigr|,
\\
\mathcal V_2(n,\epsilon)
&=
\epsilon^2 \sup_{\lambda}
\bigl|\partial_\lambda^2 V_{n,\epsilon}^{(2)}(\lambda,\mu_{1,1}|\check x_{1,1})
-\partial_\lambda^2 V_{n,\epsilon}^{(2)}(\lambda_{1,1}^*,\mu_{1,1}|x_{1,1})\bigr|,
\\
\mathcal V_3(n,\epsilon)
&=\epsilon\sup_{\lambda}
\bigl|\partial_\lambda V_{n,\epsilon}^{(2)}(\lambda,\mu_{1,1}|\check x_{1,1})
-\partial_\lambda V_{n,\epsilon}^{(2)}(\lambda,\mu_{1,1}|x_{1,1})\bigr|.
\end{align*}

(a) 
For the consistency of $\tilde\theta_0$, 
it is enough to prove that under [C1] and [C4],
\begin{equation}\label{eq-0511}
\mathcal V_1(n,\epsilon)=o_p(1).
\end{equation}

\textit{Proof of \eqref{eq-0511}. }
Since
\begin{equation}\label{eq-0512}
\mathcal V_1(n,\epsilon) \lesssim \epsilon^2 (n\epsilon^{-2} \tilde{\mathcal X}_n)
=n\tilde{\mathcal X}_n, 
\end{equation}
$\mathcal C_n=O_p(\epsilon^2 \lor n^{-1})$,
$n\tilde{\mathcal X}_n
\lesssim
n\tilde{\mathcal A}_n
+\{((n\epsilon)^2 \lor n)\tilde{\mathcal A}_n\}^{1/2}
\{(\epsilon^2 \lor n^{-1})^{-1} \mathcal C_n\}^{1/2}$,
\eqref{eq-0402} and \eqref{eq-0404}, 
we conclude the proof of \eqref{eq-0511}.

(b) For the asymptotic normality of $\tilde\theta_0$, 
it suffices to show that under [C3] and [C4],
\begin{equation}\label{eq-0513}
\mathcal V_1(n,\epsilon)=o_p(1),
\quad
\mathcal V_2(n,\epsilon)=o_p(1),
\quad
\mathcal V_3(n,\epsilon)=o_p(1).
\end{equation}

\textit{Proof of \eqref{eq-0513}. }
We obtain from \eqref{eq-0502} and \eqref{eq-0504} that 
\begin{align*}
\mathcal V_2(n,\epsilon)
&\lesssim 
\epsilon^2\{(n\epsilon^2)^{-1}(\tilde{\mathcal X}_n
+\tilde{\mathcal Y}_n+\tilde{\mathcal Z}_n)\}
=n^{-1} (\tilde{\mathcal X}_n+\tilde{\mathcal Y}_n+\tilde{\mathcal Z}_n),
\\
\mathcal V_3(n,\epsilon)
&\lesssim 
\epsilon\{\epsilon^{-2} (\tilde{\mathcal X}_n+\tilde{\mathcal Y}_n)\}
=\epsilon^{-1} (\tilde{\mathcal X}_n+\tilde{\mathcal Y}_n).
\end{align*}
Therefore, from \eqref{eq-0512} and
$n \lor n^{-1} \lor \epsilon^{-1}=n \lor \epsilon^{-1}$,
it is sufficient to prove that 
$(n\lor\epsilon^{-1})\tilde{\mathcal X}_n=o_p(1)$,
$(n^{-1} \lor \epsilon^{-1})\tilde{\mathcal Y}_n=o_p(1)$ 
and $n^{-1}\tilde{\mathcal Z}_n=o_p(1)$
under [C3] and [C4]. 
Since
\begin{align*}
(n\lor\epsilon^{-1})\tilde{\mathcal X}_n
&\lesssim
(n \lor \epsilon^{-1})\tilde{\mathcal A}_n
+\{(n \lor \epsilon^{-1})^2(\epsilon^2 \lor n^{-1})\tilde{\mathcal A}_n\}^{1/2}
\{(\epsilon^2 \lor n^{-1})^{-1} \mathcal C_n\}^{1/2},
\\
\epsilon^{-1}\tilde{\mathcal Y}_n
&\lesssim
(\epsilon^{-2}\tilde{\mathcal A}_n)^{1/2} \tilde{\mathcal B}_n^{1/2}
+\{(1 \lor (n\epsilon^2)^{-1})\tilde{\mathcal B}_n\}^{1/2}
\{(\epsilon^2 \lor n^{-1})^{-1}\mathcal C_n\}^{1/2}
\\
&\qquad
+(n\epsilon^{-2}\tilde{\mathcal A}_n)^{1/2}(n^{-1}\mathcal D_n)^{1/2},
\\
n^{-1}\tilde{\mathcal Z}_n
&\lesssim
n^{-1} \tilde{\mathcal B}_n
+(n^{-1}\tilde{\mathcal B}_n)^{1/2}(n^{-1}\mathcal D_n)^{1/2},
\end{align*}
\begin{equation*}
(n \lor \epsilon^{-1}) 
\lor (n \lor \epsilon^{-1})^2(\epsilon^2 \lor n^{-1})
\lor \epsilon^{-2} \lor n\epsilon^{-2}
=(n\epsilon)^2 \lor n\epsilon^{-2}
\end{equation*}
and 
\begin{equation*}
1 \lor (n\epsilon^2)^{-1} \lor n^{-1}
= 1 \lor (n\epsilon^2)^{-1},
\end{equation*}
it eventually suffices to prove that under [C3] and [C4],
\begin{equation*}
\{(n\epsilon)^2 \lor n\epsilon^{-2}\} \tilde{\mathcal A}_n=o_p(1),
\quad
\{1\lor(n\epsilon^2)^{-1}\}\tilde{\mathcal B}_n=o_p(1).
\end{equation*}
By using \eqref{eq-0403}, \eqref{eq-0404}, \eqref{eq-0406} and \eqref{eq-0407}, 
we complete the proof of \eqref{eq-0513}.
\end{proof}


\setcounter{section}{0}
\setcounter{subsection}{0}
\setcounter{thm}{0}
\setcounter{lem}{0}
\setcounter{rmk}{0}
\setcounter{equation}{0}
\renewcommand{\thethm}{A.\arabic{thm}}
\renewcommand{\thelem}{A.\arabic{lem}}
\renewcommand{\thermk}{\Alph{rmk}}
\renewcommand{\thesection}{\Alph{section}}
\renewcommand{\theequation}{%
A.\arabic{equation}}
\renewcommand{\thesubsection}{%
A.\arabic{subsection}}

\section*{Appendix I}
The rest of this paper is devoted to 
parameter estimation for a diffusion process with a small noise $\{x(t)\}_{t\in[0,1]}$
defined by the following stochastic differential equation
\begin{equation}\label{small_OU}
\dd x(t)
=-\lambda x(t)\dd t
+\epsilon\upsilon^{-\alpha/2}\dd w(t),
\quad t\in[0,1],
\end{equation}
where 
$\zeta=(\lambda,\upsilon)$ is an unknown parameter, 
$\epsilon\in(0,1)$ and $\alpha\in(0,1)$ are known constants,
$\{w(t)\}_{t\ge0}$ is 
a one-dimensional
standard Brownian motion.
We assume that the process $\{x(t)\}_{t\in[0,1]}$ is discretely observed 
at time points $t_i=i\Delta_n$, $i=0,1,\ldots,n$, where $\Delta_n=1/n$.

Parameter estimation for diffusion processes with a small noise 
based on discrete observations has been studied by many researchers,
see 
Genon-Catalot \cite{Genon-Catalot1990},
Laredo \cite{Laredo1990},
S{\o}rensen and Uchida \cite{Sorensen_Uchida2003},
Uchida \cite{Uchida2003}, \cite{Uchida2004},
Gloter and S{\o}rensen \cite{Gloter_Sorensen2009},
Guy et al. \cite{Guy_etal2014},
Nomura and Uchida \cite{Nomura_Uchida2016},
Kaino and Uchida \cite{Kaino_Uchida2018}
Kawai and Uchida \cite{Kawai_Uchida2022} and reference therein.
In particular, Uchida \cite{Uchida2003} estimated a parameter 
appearing in both the drift and diffusion coefficients 
under $(n\epsilon)^{-1}=O(1)$,
and Gloter and S{\o}rensen \cite{Gloter_Sorensen2009} 
considered joint estimation for both the drift and diffusion parameters
under $(n\epsilon^\rho)^{-1}=O(1)$ for some $\rho>0$.
In the previous studies, they dealt with general diffusion process models
and hence they considered parameter estimation based on 
the approximate martingale estimating function.
However, since the solution of \eqref{small_OU} can be explicitly expressed as 
\begin{equation}\label{OU_ex1}
x(t)=\ee^{-\lambda t}x(0)
+\epsilon \int_0^t \upsilon^{-\alpha/2} \ee^{-\lambda(t-s)} \dd w(s)
\end{equation}
and it follows that
\begin{equation}\label{OU_ex2}
x(t_i)-\ee^{-\lambda \Delta_n}x(t_{i-1})
=\epsilon\int_{t_{i-1}}^{t_i}\upsilon^{-\alpha/2}\ee^{-\lambda(t_i-s)}\dd w(s)
\sim N\biggl(0,\frac{\epsilon^2(1-\ee^{-2\lambda\Delta_n})}
{2\lambda \upsilon^{\alpha}}\biggr),
\end{equation}
it is not necessary to impose the conditions 
such as $(n\epsilon)^{-1}=O(1)$ and $(n\epsilon^\rho)^{-1}=O(1)$ 
assumed in the previous studies 
to evaluate the approximation error 
by constructing the contrast function based on \eqref{OU_ex2}. 
Therefore, we consider parameter estimation without such conditions. 
Specifically, we study estimation of the parameters 
in the diffusion process \eqref{small_OU} in the following two cases: 
\begin{description}
\item[Case 1]
$\lambda$ appears in both the drift and diffusion coefficients
($\upsilon=\lambda$),

\item[Case 2]
$\lambda$ only appears in the drift coefficient
($\upsilon=\mu$, and $\mu$ may be known),
\end{description}
under weaker conditions than those 
in Uchida \cite{Uchida2003} 
and Gloter and S{\o}rensen \cite{Gloter_Sorensen2009}, respectively.
We treat parameter estimation for 
Cases 1 and 2 in Subsections \ref{secA1} and \ref{secA2}, respectively.

Let $(\Omega,\mathscr F, \{{\mathscr F}_t\}_{t\ge0}, P)$ 
be a stochastic basis with usual conditions,
and let $\{w(t)\}_{t\ge0}$ be independent real valued standard Brownian motion 
on this basis.

\subsection{The case where $\lambda$ appears in both coefficients (Case 1)}\label{secA1}
In this subsection, we deal with  a one-dimensional diffusion process 
$\{x_1(t)\}_{t\in[0,1]}$ satisfying 
the following stochastic differential equation
\begin{equation}\label{small_OU_ver1}
\dd x_1(t)
=-\lambda x_1(t)\dd t
+\epsilon\lambda^{-\alpha/2}\dd w(t), 
\quad t\in[0,1],
\end{equation}
where 
$\lambda\in\Xi$, the parameter space $\Xi$ is a compact convex subset of $(0,\infty)$,
$\lambda^*\in \mathrm{Int}\,\Xi$ is the true value of $\lambda$, and
$\epsilon\in(0,1)$, $x_1(0)\neq0$ and $\alpha\in(0,1)$ are known constants.

We consider the following asymptotics for $n$ and $\epsilon$. 
\begin{enumerate}
\item[\textbf{[B1]}]
$\lim_{n\to\infty,\epsilon\to0}n\epsilon^2=0$.

\item[\textbf{[B2]}]
$\varlimsup_{n\to\infty,\epsilon\to0}(n\epsilon^2)^{-1}<\infty$, that is,

\noindent
(I) $\lim_{n\to\infty,\epsilon\to0}(n\epsilon^2)^{-1}=0$, or
(II) $0<\lim_{n\to\infty,\epsilon\to0}(n\epsilon^2)^{-1}<\infty$.
\end{enumerate}
The contrast function is as follows. 
\begin{equation*}
V_{n,\epsilon}^{(1)}(\lambda|x)
=\sum_{i=1}^n
\frac{(x(t_i)-\ee^{-\lambda \Delta_n}x(t_{i-1}))^2}
{\frac{\epsilon^2(1-\ee^{-2\lambda\Delta_n})}{2\lambda^{1+\alpha}}}
+n\log \frac{1-\ee^{-2\lambda\Delta_n}}{2\lambda^{1+\alpha}\Delta_n}.
\end{equation*}
Set 
\begin{equation*}
\hat \lambda
=\underset{\lambda}
{\mathrm{arginf}}\, V_{n,\epsilon}^{(1)}(\lambda|x_1)
\end{equation*}
as the estimator of $\lambda$. Define 
\begin{equation*}
G_1(\lambda)=\frac{1-\ee^{-2\lambda}}{2\lambda^{1-\alpha}}x_1(0)^2,
\quad
H_1(\lambda)=\frac{\alpha^2}{2\lambda^2}.
\end{equation*}
Set $I_1(\lambda)=H_1(\lambda)+c\, G_1(\lambda)$ under [B2], 
where $c=\lim_{n\to\infty,\epsilon\to0}(n\epsilon^2)^{-1}$. 

\begin{thm}\label{ap-th1}\  
\begin{enumerate}
\item[(i)]
If [B1] holds, then as $n\to\infty$ and $\epsilon\to0$,
\begin{equation*}
\epsilon^{-1}(\hat\lambda-\lambda^*)
\dto N(0,G_1(\lambda^*)^{-1}).
\end{equation*}

\item[(ii)]
If [B2] holds, then as $n\to\infty$ and $\epsilon\to0$,
\begin{equation*}
\sqrt{n}(\hat\lambda-\lambda^*)
\dto N(0,I_1(\lambda^*)^{-1}).
\end{equation*}
In particular, if [B2](I) holds, then
$\sqrt n(\hat\lambda-\lambda^*) \dto N(0,H_1(\lambda^*)^{-1})$
as $n\to\infty$ and $\epsilon\to0$,
and if [B2](II) holds, then 
$\epsilon^{-1}(\hat\lambda-\lambda^*) \dto N(0,c I_1(\lambda^*)^{-1})$
as $n\to\infty$ and $\epsilon\to0$.
\end{enumerate}
\end{thm}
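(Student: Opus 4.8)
The plan is to treat $\hat\lambda$ as the minimum-contrast (maximum-likelihood type) estimator that it really is. By \eqref{OU_ex2} the residuals $\mathcal M_i(\lambda^*)=x_1(t_i)-\ee^{-\lambda^*\Delta_n}x_1(t_{i-1})$ are independent centered Gaussians with variance $\sigma_*^2=\epsilon^2(1-\ee^{-2\lambda^*\Delta_n})/(2(\lambda^*)^{1+\alpha})$, so $V_{n,\epsilon}^{(1)}(\lambda|x_1)$ is (twice) the exact negative log-likelihood up to an additive constant in $\lambda$. Writing $F(s)=s/(1-\ee^{-s})$ and $F_n(\lambda)=\lambda^\alpha F(2\lambda\Delta_n)$, one has $V_{n,\epsilon}^{(1)}(\lambda|x_1)=\frac{F_n(\lambda)}{\epsilon^2\Delta_n}\sum_{i=1}^n\mathcal M_i(\lambda)^2-n\log F_n(\lambda)$ together with the uniform limits $F_n(\lambda)\to\lambda^\alpha$, $F_n'(\lambda)\to\alpha\lambda^{\alpha-1}$, $F_n''(\lambda)\to\alpha(\alpha-1)\lambda^{\alpha-2}$ as $n\to\infty$, and the useful identity $\sigma_*^2=\epsilon^2\Delta_n/F_n(\lambda^*)$. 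I would first record the exact forms of $\partial_\lambda V_{n,\epsilon}^{(1)}$ and $\partial_\lambda^2 V_{n,\epsilon}^{(1)}$ in terms of the sums $\sum\mathcal M_i(\lambda)^2$, $\sum\mathcal M_i(\lambda)x_1(t_{i-1})$ and $\sum x_1(t_{i-1})^2$.

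Next I would prove consistency by identifying the probability limit of a suitably normalized contrast, the normalization depending on the regime. Under [B1] the small-noise part dominates: using the explicit solution \eqref{OU_ex1}, $x_1(t_i)\to\ee^{-\lambda^* t_i}x_1(0)$ and $\ee^{-\lambda^*\Delta_n}-\ee^{-\lambda\Delta_n}\sim(\lambda-\lambda^*)\Delta_n$, I would show $\sup_\lambda|\epsilon^2\{V_{n,\epsilon}^{(1)}(\lambda|x_1)-V_{n,\epsilon}^{(1)}(\lambda^*|x_1)\}-\lambda^\alpha(\lambda-\lambda^*)^2\frac{1-\ee^{-2\lambda^*}}{2\lambda^*}x_1(0)^2|\pto0$; the limit is nonnegative with a well-separated minimum at $\lambda^*$, giving $\hat\lambda\pto\lambda^*$. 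Under [B2] the same expansion applied to $\frac1nV_{n,\epsilon}^{(1)}(\lambda|x_1)$ yields the limit $(\lambda/\lambda^*)^\alpha-\log\lambda^\alpha+c\,\lambda^\alpha(\lambda-\lambda^*)^2\frac{1-\ee^{-2\lambda^*}}{2\lambda^*}x_1(0)^2$ with $c=\lim(n\epsilon^2)^{-1}$, which again is uniquely minimized at $\lambda^*$.

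The heart of the argument is the joint-asymptotic central limit theorem for the score at $\lambda^*$ and the probability limit of the observed information. I would decompose $\partial_\lambda V_{n,\epsilon}^{(1)}(\lambda^*|x_1)=S_1+S_2$, where $S_1=\frac{F_n'(\lambda^*)}{\epsilon^2\Delta_n}\sum(\mathcal M_i(\lambda^*)^2-\sigma_*^2)$ is a centered $\chi^2$-type fluctuation and $S_2=\frac{2\ee^{-\lambda^*\Delta_n}F_n(\lambda^*)}{\epsilon^2}\sum\mathcal M_i(\lambda^*)x_1(t_{i-1})$ is a martingale sum. A third-moment computation shows $S_1$ and $S_2$ are asymptotically uncorrelated, and direct variance calculations give $\VV[S_1]\sim4nH_1(\lambda^*)$ and $\VV[S_2]\sim4G_1(\lambda^*)/\epsilon^2$ (the latter via $\Delta_n\sum x_1(t_{i-1})^2\to\int_0^1\ee^{-2\lambda^* t}x_1(0)^2\dd t$). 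The decisive point is that the two scales compete through $n\epsilon^2$: under [B1] ($n\epsilon^2\to0$) $S_2$ dominates and $\epsilon\,\partial_\lambda V_{n,\epsilon}^{(1)}(\lambda^*|x_1)\dto N(0,4G_1(\lambda^*))$, whereas under [B2] both contribute and $\frac1{\sqrt n}\partial_\lambda V_{n,\epsilon}^{(1)}(\lambda^*|x_1)\dto N(0,4I_1(\lambda^*))$ with $I_1=H_1+cG_1$. In parallel I would show $\partial_\lambda^2 V_{n,\epsilon}^{(1)}(\lambda^*|x_1)\sim2nH_1(\lambda^*)+2G_1(\lambda^*)/\epsilon^2$, whence $\epsilon^2\partial_\lambda^2 V_{n,\epsilon}^{(1)}(\lambda^*|x_1)\pto2G_1(\lambda^*)$ under [B1] and $\frac1n\partial_\lambda^2 V_{n,\epsilon}^{(1)}(\lambda^*|x_1)\pto2I_1(\lambda^*)$ under [B2], together with local uniform control of the Hessian so the Taylor remainder is negligible.

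Finally I would expand $0=\partial_\lambda V_{n,\epsilon}^{(1)}(\hat\lambda|x_1)$ about $\lambda^*$ and invert the information, so that by Slutsky $\epsilon^{-1}(\hat\lambda-\lambda^*)=-\{\epsilon^2\partial_\lambda^2 V_{n,\epsilon}^{(1)}\}^{-1}\epsilon\,\partial_\lambda V_{n,\epsilon}^{(1)}+o_p(1)\dto N(0,G_1(\lambda^*)^{-1})$ under [B1], and $\sqrt n(\hat\lambda-\lambda^*)\dto N(0,I_1(\lambda^*)^{-1})$ under [B2]; the two sub-statements of (ii) follow since $c=0$ gives $I_1=H_1$ while, when $0<c<\infty$, $\sqrt n\,\epsilon=\sqrt{n\epsilon^2}\to c^{-1/2}$ rescales the $\sqrt n$-limit into the $\epsilon^{-1}$-limit. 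The main obstacle is precisely this CLT in the joint regime: the score carries two sources of randomness living on the incompatible scales $\sqrt n$ and $\epsilon^{-1}$, and one must verify a Lindeberg condition for the combined martingale array and the asymptotic orthogonality of $S_1$ and $S_2$ so that the limiting variance assembles correctly as $H_1+cG_1$ across [B1], [B2](I) and [B2](II) simultaneously; securing identifiability uniformly in both $n\to\infty$ and $\epsilon\to0$ for the consistency step is the secondary difficulty.
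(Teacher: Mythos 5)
Your proposal is correct and follows essentially the same route as the paper: the same decomposition of the score at $\lambda^*$ into a centered quadratic (chi-square type) term and a martingale term $\sum_i \mathcal M_i(\lambda^*)x_1(t_{i-1})$, the same uniform limits of $F_n$, $F_n'$, $F_n''$, the same regime-dependent normalizations ($\epsilon^2$ under [B1], $n^{-1}$ under [B2]) for both the contrast and the Hessian, and the same martingale CLT with exact vanishing of the cross term via the odd conditional Gaussian moments. The only cosmetic difference is that the paper organizes the [B1] and [B2] score arguments as two separate decompositions ($\mathcal N_{1,i},\mathcal N_{2,i}$ versus $\mathcal N_{3,i},\mathcal N_{4,i}$) rather than a single two-scale analysis, but the content is identical.
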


\begin{rmk}\label{ap-rmk1}
By comparing Theorem \ref{ap-th1} (i) with Corollary 1 in Uchida \cite{Uchida2003},
we can see that Corollary 1 in \cite{Uchida2003} requires the condition
$\lim_{n\to\infty,\epsilon\to0}(n\epsilon)^{-1}=0$
for asymptotic normality of the estimator, while Theorem \ref{ap-th1} (i) does not.
This is because \cite{Uchida2003} constructed the contrast function based on 
the Euler-Maruyama approximation, whereas we construct that based on 
the explicit representation 
of the solution of \eqref{small_OU_ver1}.
See Remark \ref{ap-rmk3} for details.
\end{rmk}

\subsection{The case where $\lambda$ appears in drift coefficient (Case 2)}\label{secA2}
In this subsection, we treat a one-dimensional diffusion process 
$\{x_2(t)\}_{t\in[0,1]}$ defined by the stochastic differential equation
\begin{equation}\label{small_OU_ver2}
\dd x_2(t)=-\lambda x_2(t)\dd t+\epsilon\mu^{-\alpha/2}\dd w(t),
\quad t\in[0,1],
\end{equation}
where 
$(\lambda,\mu)\in\Xi$, the parameter space $\Xi$ is a compact convex subset 
of $(0,\infty)^2$,
$(\lambda^*,\mu^*)\in \mathrm{Int}\,\Xi$ is the true value of $(\lambda,\mu)$,
and $\epsilon\in(0,1)$, $x_2(0)\neq0$ and $\alpha\in(0,1)$ are known constants.

The contrast function is as follows. 
\begin{equation*}
V_{n,\epsilon}^{(2)}(\lambda,\mu|x)
=\sum_{i=1}^n
\frac{(x(t_i)-\ee^{-\lambda \Delta_n}x(t_{i-1}))^2}
{\frac{\epsilon^2(1-\ee^{-2\lambda\Delta_n})}{2\lambda\mu^\alpha}}
+n\log \frac{1-\ee^{-2\lambda\Delta_n}}{2\lambda\mu^\alpha\Delta_n}.
\end{equation*}
If $\mu$ is known, then let
\begin{equation*}
\tilde \lambda
=\underset{\lambda}
{\mathrm{arginf}}\, V_{n,\epsilon}^{(2)}(\lambda,\mu|x_2)
\end{equation*}
as the estimator of $\lambda$, or if $\mu$ is unknown, then let  
\begin{equation*}
(\tilde \lambda, \tilde \mu)
=\underset{\lambda,\mu}
{\mathrm{arginf}}\, V_{n,\epsilon}^{(2)}(\lambda,\mu|x_2)
\end{equation*}
as the estimator of $(\lambda,\mu)$. Moreover, set
\begin{align*}
G_2(\lambda,\mu)=\frac{1-\ee^{-2\lambda}}{2\lambda}\mu^\alpha x_2(0)^2,
\quad
H_2(\mu)=\frac{\alpha^2}{2\mu^2},
\quad
I_2(\lambda,\mu)=\mathrm{diag}\{G_2(\lambda,\mu),H_2(\mu) \}.
\end{align*}

\begin{thm}\label{ap-th2}\  
\begin{enumerate}
\item[(1)]
If $\mu$ is known, 
then as $n\to\infty$ and $\epsilon\to0$,
\begin{equation*}
\epsilon^{-1}(\tilde \lambda-\lambda^*)
\dto N(0,G_2(\lambda^*,\mu)^{-1}).
\end{equation*}

\item[(2)]
If $\mu$ is unknown, then as $n\to\infty$ and $\epsilon\to0$,
\begin{equation*}
\begin{pmatrix}
\epsilon^{-1}(\tilde \lambda-\lambda^*)
\\
\sqrt{n}(\tilde \mu-\mu^*)
\end{pmatrix}
\dto N(0,I_2(\lambda^*,\mu^*)^{-1}).
\end{equation*}

\end{enumerate}
\end{thm}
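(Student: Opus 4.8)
The plan is to prove part (2) (the case $\mu$ unknown) in full and obtain part (1) by discarding the $\mu$-coordinate, since the treatment of the $\lambda$-component is identical in both. Write $\zeta=(\lambda,\mu)$, $\zeta^*=(\lambda^*,\mu^*)$, and, as in the body, $\mathcal M_i(\lambda)=x_2(t_i)-\ee^{-\lambda\Delta_n}x_2(t_{i-1})$ and $F_n(\lambda,\mu)=\mu^\alpha F(2\lambda\Delta_n)$ with $F(s)=s/(1-\ee^{-s})$, so that $V_{n,\epsilon}^{(2)}(\lambda,\mu|x_2)=\frac{F_n(\lambda,\mu)}{\epsilon^2\Delta_n}\sum_i\mathcal M_i(\lambda)^2-n\log F_n(\lambda,\mu)$. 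The whole argument rests on two facts that explain why no balance condition between $n$ and $\epsilon$ is needed: by \eqref{OU_ex2} the increments $\mathcal M_i(\lambda^*)$ are independent $N(0,\sigma_1^2)$ with $\sigma_1^2=\epsilon^2(1-\ee^{-2\lambda^*\Delta_n})/(2\lambda^*(\mu^*)^\alpha)$ and independent of $\mathscr F_{t_{i-1}}$; and, since $\epsilon\to0$, $x_2(t)=\ee^{-\lambda^*t}x_2(0)+O_p(\epsilon)$, whence Riemann sums such as $\Delta_n\sum_i x_2(t_{i-1})^2$ converge in probability to $\int_0^1\ee^{-2\lambda^*t}x_2(0)^2\,\dd t=\frac{1-\ee^{-2\lambda^*}}{2\lambda^*}x_2(0)^2$. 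The decomposition $\mathcal M_i(\lambda)=\mathcal M_i(\lambda^*)+(\ee^{-\lambda^*\Delta_n}-\ee^{-\lambda\Delta_n})x_2(t_{i-1})$, separating noise from drift mismatch, will be used throughout.

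For consistency I would exploit the two natural normalizations corresponding to the two rates. Using $F_n/\Delta_n\approx\mu^\alpha/\Delta_n$ and $\ee^{-\lambda^*\Delta_n}-\ee^{-\lambda\Delta_n}\approx\Delta_n(\lambda-\lambda^*)$, I expect $\sup_{\lambda,\mu}\bigl|\epsilon^2\{V_{n,\epsilon}^{(2)}(\lambda,\mu|x_2)-V_{n,\epsilon}^{(2)}(\lambda^*,\mu|x_2)\}-\mu^\alpha(\lambda-\lambda^*)^2\frac{1-\ee^{-2\lambda^*}}{2\lambda^*}x_2(0)^2\bigr|\pto0$, whose limit is uniquely minimized at $\lambda=\lambda^*$ for every $\mu$; this gives $\tilde\lambda\pto\lambda^*$ at the $\epsilon^{-1}$ scale. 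For $\mu$ I would use $\frac1n V_{n,\epsilon}^{(2)}(\lambda^*,\mu|x_2)\pto(\mu/\mu^*)^\alpha-\alpha\log\mu$, which follows from the law of large numbers $\frac{1}{n\sigma_1^2}\sum_i\mathcal M_i(\lambda^*)^2\pto1$ for the independent scaled $\chi^2$ variables and is uniquely minimized at $\mu=\mu^*$; together with $\tilde\lambda\pto\lambda^*$ this yields $\tilde\mu\pto\mu^*$. The only care needed is keeping the two scalings consistent in a single joint $\mathrm{arginf}$, but no balance condition enters because the $\chi^2$ law of large numbers holds for every $n\to\infty$.

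For asymptotic normality I would Taylor-expand the score equation $\partial_\zeta V_{n,\epsilon}^{(2)}(\tilde\zeta)=0$ about $\zeta^*$ with rate matrix $\psi_{n,\epsilon}=\mathrm{diag}(\epsilon,n^{-1/2})$, reducing everything to (a) the Hessian limit $\psi_{n,\epsilon}\partial_\zeta^2 V_{n,\epsilon}^{(2)}(\zeta^*)\psi_{n,\epsilon}\pto2I_2(\lambda^*,\mu^*)$ with local uniform continuity near $\zeta^*$, and (b) the score limit $-\psi_{n,\epsilon}\partial_\zeta V_{n,\epsilon}^{(2)}(\zeta^*)^\TT\dto N(0,4I_2(\lambda^*,\mu^*))$. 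Using the expansions of $F_n$ and its derivatives recorded in the proof of Theorem \ref{th4}, the $(\lambda,\lambda)$ Hessian limit comes from the single dominant term $\frac{2\ee^{-2\lambda\Delta_n}F_n\Delta_n}{\epsilon^2}\sum_i x_2(t_{i-1})^2$ (giving $2G_2$ via the Riemann sum), the $(\mu,\mu)$ limit from the exact cancellation of $\frac{\partial_\mu^2F_n}{\epsilon^2\Delta_n}\sum_i\mathcal M_i^2$ against $-n\partial_\mu^2\log F_n$ (giving $2H_2$), and the off-diagonal entry is $o_p(1)$ after the $\epsilon n^{-1/2}$ scaling. One first checks $\EE[\partial_\lambda V_{n,\epsilon}^{(2)}(\zeta^*)]=\EE[\partial_\mu V_{n,\epsilon}^{(2)}(\zeta^*)]=0$ exactly, and identifies the $\lambda$-score asymptotically with the martingale $\frac{2\mu^\alpha}{\epsilon}\sum_i\mathcal M_i(\lambda^*)x_2(t_{i-1})$ and the $\mu$-score with $\frac{\alpha}{\mu^*\sqrt n}\sum_i(\mathcal M_i(\lambda^*)^2/\sigma_1^2-1)$.

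The hard part will be the joint central limit theorem for the score in step (b). The $\lambda$-component requires a martingale CLT (Hall--Heyde \cite{Hall_Heyde1980}) whose increments are neither identically distributed nor of constant conditional variance, so one must show the predictable quadratic variation $\frac{4\mu^{2\alpha}}{\epsilon^2}\sigma_1^2\sum_i x_2(t_{i-1})^2\pto4G_2(\lambda^*,\mu^*)$ — this is exactly where the small-noise limit $x_2(t_{i-1})\to\ee^{-\lambda^*t_{i-1}}x_2(0)$ and the Riemann-sum convergence enter — and verify a conditional Lindeberg condition for the triangular array. The $\mu$-component is a classical CLT for the centred sum of independent $\chi^2_1$ variables, with limiting variance $\frac{2\alpha^2}{(\mu^*)^2}=4H_2(\mu^*)$. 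The remaining subtlety is asymptotic independence of the two components, which I would obtain by Cram\'er--Wold: the $\lambda$-part is linear and the $\mu$-part quadratic in the same Gaussian increments, so by vanishing of odd Gaussian moments the mixed covariance is $O_p(\epsilon n^{-1/2})\to0$, yielding the diagonal covariance $4I_2$ and hence $\psi_{n,\epsilon}^{-1}(\tilde\zeta-\zeta^*)=(2I_2)^{-1}N(0,4I_2)+o_p(1)\dto N(0,I_2^{-1})$. Part (1) then follows by retaining only the $\lambda$-coordinate, giving $\epsilon^{-1}(\tilde\lambda-\lambda^*)\dto N(0,G_2(\lambda^*,\mu)^{-1})$.
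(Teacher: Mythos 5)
Your overall architecture coincides with the paper's own proof: the $\epsilon^2$-normalized contrast for the $\lambda$-direction, the $1/n$-normalized contrast for the $\mu$-direction, the Taylor expansion of the score with rate matrix $\mathrm{diag}(\epsilon,n^{-1/2})$, the Hessian limit $2I_2(\lambda^*,\mu^*)$ along a segment, the Hall--Heyde martingale CLT for the score, and the vanishing conditional cross-covariance via odd Gaussian moments are exactly the steps the paper carries out. The genuine gap sits at the point you dismiss as ``the only care needed'': the consistency of $\tilde\mu$. The estimator $\tilde\mu$ minimizes $\mu\mapsto V_{n,\epsilon}^{(2)}(\tilde\lambda,\mu|x_2)$, not $\mu\mapsto V_{n,\epsilon}^{(2)}(\lambda^*,\mu|x_2)$, so your limit for $\frac{1}{n}V_{n,\epsilon}^{(2)}(\lambda^*,\mu|x_2)$ must be transferred to the contrast actually being minimized. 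Writing $\mathcal M_i(\tilde\lambda)=\mathcal M_i(\lambda^*)+(\ee^{-\lambda^*\Delta_n}-\ee^{-\tilde\lambda\Delta_n})x_2(t_{i-1})$, the difference $\frac{1}{n}\{V_{n,\epsilon}^{(2)}(\tilde\lambda,\mu|x_2)-V_{n,\epsilon}^{(2)}(\lambda^*,\mu|x_2)\}$ contains a cross term of order $|\tilde\lambda-\lambda^*|(n\epsilon)^{-1}$ (since $\sum_i\mathcal M_i(\lambda^*)x_2(t_{i-1})=O_p(\epsilon)$ by \eqref{ap-eq-0102}) and a squared term of order $|\tilde\lambda-\lambda^*|^2(n\epsilon^2)^{-1}$. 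Theorem \ref{ap-th2} imposes no balance condition between $n$ and $\epsilon$ --- the regimes $n\epsilon^2\to0$ and even $n\epsilon\to0$ are allowed --- so mere consistency $\tilde\lambda-\lambda^*=o_p(1)$ does not make these terms vanish. What is needed is the tightness $\epsilon^{-1}(\tilde\lambda-\lambda^*)=O_p(1)$, and your assertion that the uniform convergence of $\epsilon^2\{V_{n,\epsilon}^{(2)}(\lambda,\mu|x_2)-V_{n,\epsilon}^{(2)}(\lambda^*,\mu|x_2)\}$ already gives consistency ``at the $\epsilon^{-1}$ scale'' is unjustified: an argmin argument based on uniform convergence of a normalized contrast yields $\tilde\lambda\pto\lambda^*$ but no rate.

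This is precisely the step the paper inserts before touching $\mu$: it first proves $\epsilon^{-1}(\tilde\lambda-\lambda^*)=O_p(1)$ by Taylor expanding $\partial_\lambda V_{n,\epsilon}^{(2)}$ around $\lambda^*$ at the (not yet known to be consistent) $\tilde\mu$, which requires the uniform-in-$\mu$ bounds \eqref{ap-eq-0705} and \eqref{ap-eq-0706} together with $\inf_\mu G_2(\lambda^*,\mu)>0$; only then does it establish \eqref{ap-eq-0702}, i.e.\ the uniform limit of $\frac{1}{n}\{V_{n,\epsilon}^{(2)}(\tilde\lambda,\mu|x_2)-V_{n,\epsilon}^{(2)}(\tilde\lambda,\mu^*|x_2)\}$, evaluated at $\tilde\lambda$ rather than at $\lambda^*$. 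The same tightness is reused in your step (a): the Hessian must converge along the random segment $\lambda_u=\lambda^*+u(\tilde\lambda-\lambda^*)$, and for instance the term $\epsilon^{-2}(\ee^{-\lambda^*\Delta_n}-\ee^{-\lambda_u\Delta_n})^2\sum_i x_2(t_{i-1})^2$ appearing in $\frac{1}{n}\partial_\mu^2 V_{n,\epsilon}^{(2)}$ is $O_p((\lambda_u-\lambda^*)^2(n\epsilon^2)^{-1})$, which is $o_p(1)$ only once $(\tilde\lambda-\lambda^*)^2=O_p(\epsilon^2)$ is known; ``local uniform continuity near $\zeta^*$'' conceals the same missing ingredient. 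Once the tightness step is inserted, the rest of your argument goes through and essentially reproduces the paper's proof of both parts.
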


\begin{rmk}\label{ap-rmk2}
By comparing Theorem \ref{ap-th2} (2) with 
Theorem 1 in Gloter and S{\o}rensen \cite{Gloter_Sorensen2009},
we notice that 
Theorem 1 in \cite{Gloter_Sorensen2009} imposes the condition
that there exists $\rho>0$ such that 
$\varlimsup_{n\to0,\epsilon\to0}(\epsilon n^\rho)^{-1}<\infty$
for asymptotic normality of the estimator,
but Theorem \ref{ap-th2} (2) does not.
This is because, as we mentioned in Remark \ref{ap-rmk1}, 
\cite{Gloter_Sorensen2009} constructed the contrast function based on 
the It{\^o}-Taylor expansion, 
while our contrast function is based on 
the explicit likelihood of \eqref{small_OU_ver2}.
See Remark \ref{ap-rmk3} for details.
\end{rmk}

\subsection{Proofs of Theorems \ref{ap-th1} and \ref{ap-th2}}
In this subsection, we give proofs of the assertions 
in Subsections \ref{secA1} and \ref{secA2}.
Our proofs are based on 
S{\o}rensen and Uchida \cite{Sorensen_Uchida2003},
Uchida \cite{Uchida2003} and
Gloter and S{\o}rensen \cite{Gloter_Sorensen2009}.

Let $C^{k,\ell}_{\uparrow}(\mathbb R\times\Xi)$ be 
the space of all functions $f$ satisfying the following conditions.

\begin{enumerate}
\item[(i)] 
$f$ is continuously differentiable with respect to $x\in\mathbb R$ up to
order $k$ for all $\zeta\in\Xi$,
\item[(ii)]  
$f$ and all its $x$-derivatives up to order $k$ are 
$\ell$ times continuously differentiable with respect to $\zeta\in\Xi$,
\item[(iii)]  
$f$ and all derivatives are of polynomial growth in $x\in\mathbb R$ 
uniformly in $\zeta\in\Xi$.
\end{enumerate}

For $\{x(t)\}_{t\in[0,1]}$ defined by \eqref{small_OU}, let
\begin{equation*}
\mathcal M_{i}(\lambda)=x(t_i)-\ee^{-\lambda\Delta_n} x(t_{i-1}).
\end{equation*}
Moreover, let
$\zeta^*=(\lambda^*,\upsilon^*)$ be the true value of $\zeta=(\lambda,\upsilon)$
and $x_0(t)=x(0)\ee^{-\lambda^* t}$.

\begin{lem}\label{ap-lem1}
Let $f\in C_{\uparrow}^{1,1}(\mathbb R\times\Xi)$.
Then, the followings hold.
\begin{enumerate}
\item[(1)] As $n\to\infty$ and $\epsilon\to0$,
\begin{equation*}
\frac{1}{n}\sum_{i=1}^n f(x(t_{i-1}),\zeta)
\pto \int_0^1 f(x_0(s),\zeta)\dd s
\quad\text{uniformly in }\zeta.
\end{equation*}

\item[(2)] As $n\to\infty$ and $\epsilon\to0$,
\begin{equation*}
\sum_{i=1}^n f(x(t_{i-1}),\zeta)\mathcal M_i(\lambda^*)\pto 0
\quad\text{uniformly in }\zeta.
\end{equation*}

\item[(3)] As $n\to\infty$ and $\epsilon\to0$,
\begin{equation*}
\epsilon^{-2}\sum_{i=1}^n f(x(t_{i-1}),\zeta)\mathcal M_i(\lambda^*)^2
\pto \frac{1}{(\upsilon^*)^\alpha}\int_0^1 f(x_0(s),\zeta) \dd s
\quad\text{uniformly in }\zeta.
\end{equation*}
\end{enumerate}
\end{lem}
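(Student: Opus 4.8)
The plan is to exploit the explicit representation \eqref{OU_ex1}--\eqref{OU_ex2} through the decomposition $x(t)=x_0(t)+M(t)$, where $M(t)=\epsilon\int_0^t(\upsilon^*)^{-\alpha/2}\ee^{-\lambda^*(t-s)}\dd w(s)=\ee^{-\lambda^* t}\tilde M(t)$ and $\tilde M(t)=\epsilon\int_0^t(\upsilon^*)^{-\alpha/2}\ee^{\lambda^* s}\dd w(s)$ is a genuine martingale. By Doob's inequality, $\EE[\sup_{t\in[0,1]}|M(t)|^q]\lesssim\epsilon^q$ for every $q\ge1$, so $\sup_{t\in[0,1]}|x(t)-x_0(t)|=O_p(\epsilon)$ and $\EE[\sup_{t\in[0,1]}|x(t)|^q]$ is bounded uniformly in $\epsilon\in(0,1)$. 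Together with the polynomial-growth condition (iii) on $f$ and its derivatives, this first step supplies all the moment bounds invoked below.

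For (1), I would first replace $x(t_{i-1})$ by $x_0(t_{i-1})$: the mean value theorem and (iii) give $\frac1n\sum_{i=1}^n|f(x(t_{i-1}),\zeta)-f(x_0(t_{i-1}),\zeta)|\lesssim\sup_i|x(t_{i-1})-x_0(t_{i-1})|\cdot\frac1n\sum_i(1+|x(t_{i-1})|+|x_0(t_{i-1})|)^p=O_p(\epsilon)$ uniformly in $\zeta$. It then remains to observe that $\frac1n\sum_i f(x_0(t_{i-1}),\zeta)$ is a Riemann sum for $\int_0^1 f(x_0(s),\zeta)\dd s$ whose error is controlled by the modulus of continuity of the deterministic, jointly continuous map $(s,\zeta)\mapsto f(x_0(s),\zeta)$ on the compact set $[0,1]\times\Xi$, hence vanishes uniformly in $\zeta$.

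For (2) and (3) I would use the martingale structure. Since $\mathcal M_i(\lambda^*)$ is independent of $\mathscr F_{t_{i-1}}$ with $\EE[\mathcal M_i(\lambda^*)^2\mid\mathscr F_{t_{i-1}}]=\epsilon^2(1-\ee^{-2\lambda^*\Delta_n})/(2\lambda^*(\upsilon^*)^\alpha)$, each sum is, for fixed $\zeta$, a sum of martingale differences relative to $\{\mathscr F_{t_i}\}$. For (2) the second moment is $\sum_i\EE[f(x(t_{i-1}),\zeta)^2]\,O(\epsilon^2/n)=O(\epsilon^2)\to0$. For (3) I would split $\mathcal M_i(\lambda^*)^2=\EE[\mathcal M_i(\lambda^*)^2\mid\mathscr F_{t_{i-1}}]+\{\mathcal M_i(\lambda^*)^2-\EE[\mathcal M_i(\lambda^*)^2\mid\mathscr F_{t_{i-1}}]\}$; the leading term equals $(\upsilon^*)^{-\alpha}\frac{1-\ee^{-2\lambda^*\Delta_n}}{2\lambda^*\Delta_n}\cdot\frac1n\sum_i f(x(t_{i-1}),\zeta)$, which converges to $(\upsilon^*)^{-\alpha}\int_0^1 f(x_0(s),\zeta)\dd s$ by (1) and $\frac{1-\ee^{-2\lambda^*\Delta_n}}{2\lambda^*\Delta_n}\to1$, while the fluctuation term is a martingale whose second moment is $\epsilon^{-4}\sum_i\EE[f^2]\,O(\epsilon^4\Delta_n^2)=O(1/n)\to0$, using $\VV(\mathcal M_i(\lambda^*)^2\mid\mathscr F_{t_{i-1}})=2(\EE[\mathcal M_i(\lambda^*)^2\mid\mathscr F_{t_{i-1}}])^2$ for the Gaussian increment.

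The main obstacle is upgrading these pointwise-in-$\zeta$ convergences to the uniform statements. I would handle this by a Sobolev-embedding argument: writing $G_n(\zeta)$ for each centered sum, the $\zeta$-dependence enters only through $f$, so $\partial_\zeta G_n(\zeta)$ is the same type of sum with $f$ replaced by $\partial_\zeta f$, which is again of polynomial growth in $x$ uniformly in $\zeta$. Repeating the $L^2$ estimates above in $L^p$ (via the Burkholder--Davis--Gundy inequality for (2) and (3)) for both $G_n$ and $\partial_\zeta G_n$ yields $\sup_\zeta\EE[|G_n(\zeta)|^p]\to0$ and $\sup_\zeta\EE[|\partial_\zeta G_n(\zeta)|^p]\to0$ for some $p>\dim\Xi=2$, whence $\sup_\zeta|G_n(\zeta)|\pto0$ on the bounded domain $\Xi$. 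Securing these higher moments uniformly in $\epsilon$, so that the $L^p$ bounds still tend to zero, is the one delicate point, and it is precisely where the uniform control of $\EE[\sup_{t\in[0,1]}|x(t)|^q]$ obtained in the first step is indispensable.
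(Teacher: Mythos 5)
Your proposal is correct, and its core coincides with the paper's: the exact conditional Gaussian moments of $\mathcal M_i(\lambda^*)$ and, for (3), the split into a conditional-mean part (reduced to (1) via $\frac{1-\ee^{-2\lambda^*\Delta_n}}{2\lambda^*\Delta_n}\to1$) plus a centered fluctuation whose conditional variance is of order $\epsilon^4\Delta_n^2$ per summand. The packaging differs in two ways. First, for (1)--(2) the paper simply cites Lemma 2 of S{\o}rensen and Uchida \cite{Sorensen_Uchida2003}, which you instead prove directly from the decomposition $x=x_0+O_p(\epsilon)$; and your hands-on second-moment computation in (3) is exactly what Lemma 9 of Genon-Catalot and Jacod \cite{Genon-Catalot_Jacod1993}, invoked by the paper, encapsulates (conditional means converge, conditional variances sum to $o_p(1)$). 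Second, and this is the genuine divergence, the uniformity in $\zeta$: the paper combines pointwise convergence in probability with tightness of the random functions in $C(\Xi)$, for which the mere boundedness $\sup_{n,\epsilon}\EE\bigl[\sup_\zeta\bigl|\epsilon^{-2}\sum_{i=1}^n\partial_\zeta f(x(t_{i-1}),\zeta)\mathcal M_i(\lambda^*)^2\bigr|\bigr]<\infty$ suffices, whereas your Sobolev-embedding route ($W^{1,p}(\Xi)\hookrightarrow C(\Xi)$, $p>2$) needs the stronger statements $\sup_\zeta\EE[|G_n(\zeta)|^p]\to0$ and $\sup_\zeta\EE[|\partial_\zeta G_n(\zeta)|^p]\to0$. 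These stronger bounds do hold here -- the increments are Gaussian so all moments are available via Burkholder--Davis--Gundy, and $\partial_\zeta f$, $\partial_x\partial_\zeta f$ inherit the uniform polynomial growth needed to rerun (1) for $\partial_\zeta f$, while the compact convex $\Xi\subset(0,\infty)^2$ is a Lipschitz domain -- so your argument closes. What each approach buys: yours is self-contained and quantitative (explicit rates $O(\epsilon^p)$ and $O(n^{-p/2})$), while the paper's tightness argument is lighter on moments, requiring only first moments of suprema of the $\zeta$-derivative rather than vanishing $L^p$ norms.
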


\begin{proof}
In the same way as the proof of Lemma 2 in S{\o}rensen and Uchida 
\cite{Sorensen_Uchida2003}, (1) and (2) can be proved.
(3) Let $F(s)=\frac{s}{1-\ee^{-s}}$ and 
$F_n(\zeta)=\upsilon^\alpha F(2\lambda\Delta_n)$.
Since
\begin{equation}\label{ap-eq-0001}
\EE[\mathcal M_i(\lambda^*)^2|\GG]
=\frac{\epsilon^2\Delta_n}{F_n(\zeta^*)},
\quad
\EE[\mathcal M_i(\lambda^*)^4|\GG]
=3\biggl(\frac{\epsilon^2\Delta_n}{F_n(\zeta^*)}\biggr)^2,
\end{equation}
it follows from (1) and $F_n(\zeta)\to\upsilon^\alpha$  uniformly in $\zeta$ that
\begin{equation}\label{ap-eq-0002}
\epsilon^{-2}\sum_{i=1}^n \EE[f(x(t_{i-1}),\zeta)\mathcal M_i(\lambda^*)^2|\GG]
=\Delta_n\sum_{i=1}^n \frac{f(x(t_{i-1}),\zeta)}{F_n(\zeta^*)}
\pto \frac{1}{(\upsilon^*)^\alpha}\int_0^1 f(x_0(s),\zeta) \dd s,
\end{equation}
\begin{equation}\label{ap-eq-0003}
\epsilon^{-4}\sum_{i=1}^n \EE[f(x(t_{i-1}),\zeta)^2\mathcal M_i(\lambda^*)^4|\GG]
\lesssim
\Delta_n^2
\sum_{i=1}^n \frac{f(x(t_{i-1}),\zeta)^2}{F_n(\zeta^*)^2}\pto0
\end{equation}
as $n\to\infty$ and $\epsilon\to0$.
Therefore, it holds from Lemma 9 in Genon-Catalot and Jacod 
\cite{Genon-Catalot_Jacod1993} that
as $n\to\infty$ and $\epsilon\to0$,
\begin{equation*}
\epsilon^{-2}\sum_{i=1}^n f(x(t_{i-1}),\zeta)\mathcal M_i(\lambda^*)^2
\pto \frac{1}{(\upsilon^*)^\alpha}\int_0^1 f(x_0(s),\zeta) \dd s.
\end{equation*}
The tightness of the family of distribution of 
$\epsilon^{-2}\sum_{i=1}^n f(x(t_{i-1}),\cdot)\mathcal M_i(\lambda^*)^2$
follows from
\begin{align}
&\sup_{n,\epsilon}\EE\Biggl[
\sup_{\zeta}
\biggl|
\epsilon^{-2}\sum_{i=1}^n \partial_\zeta f(x(t_{i-1}),\zeta)\mathcal M_i(\lambda^*)^2
\biggr|\Biggr]
\nonumber
\\
&\le
\sup_{n,\epsilon}\EE\Biggl[
\epsilon^{-2}
\sum_{i=1}^n \sup_{\zeta}
\bigl|\partial_\zeta f(x(t_{i-1}),\zeta)\bigr|
\EE[\mathcal M_i(\lambda^*)^2|\GG]
\Biggr]
\nonumber
\\
&=
\sup_{n,\epsilon}\EE\Biggl[
\frac{\Delta_n}{F_n(\zeta^*)}
\sum_{i=1}^n \sup_{\zeta}
\bigl|\partial_\zeta f(x(t_{i-1}),\zeta)\bigr|
\Biggr]
<\infty.
\label{ap-eq-0004}
\end{align}
\end{proof}

\begin{rmk}\label{ap-rmk3}
Since S{\o}rensen and Uchida \cite{Sorensen_Uchida2003} considered parameter estimation 
by using a contrast function based on the Euler-Maruyama approximation, 
they introduced the condition (B1) in \cite{Sorensen_Uchida2003} 
to asymptotically ignore its approximation error.
Indeed, by viewing the proof of Lemma 3 (i) in \cite{Sorensen_Uchida2003}, 
we see that it is necessary to impose this condition in order to obtain 
the convergences corresponding to \eqref{ap-eq-0002} and \eqref{ap-eq-0003}.
However, in our model, the solution can be explicitly represented 
and the estimator is constructed as the minimum contrast estimator 
based on that solution, and therefore 
$\{\mathcal M_i(\lambda^*)\}_{i=1}^n$ is a martingale 
and no remainder term appears in 
\eqref{ap-eq-0002}-\eqref{ap-eq-0004},
unlike \cite{Sorensen_Uchida2003}, that is, such a condition is not required.
For this reason, the assumptions in Theorem \ref{ap-th1} (i) 
and Theorem \ref{ap-th2} (2) are weaker 
than those in Uchida \cite{Uchida2003} and 
Gloter and S{\o}rensen \cite{Gloter_Sorensen2009}, respectively. 
\end{rmk}

Before proving the theorems, 
we note that the following properties hold.
\begin{align}
\epsilon^{-2}\sum_{i=1}^n \mathcal M_i(\lambda^*)^2
&\pto (\upsilon^*)^{-\alpha},
\label{ap-eq-0101}
\\
\epsilon^{-1}
\sum_{i=1}^n \mathcal M_i(\lambda^*) x(t_{i-1})
&\dto N\biggl(0,(\upsilon^*)^{-\alpha}\frac{1-\ee^{-2\lambda^*}}{2\lambda^*}x(0)^2
\biggr),
\label{ap-eq-0102}
\\
\frac{1}{n}\sum_{i=1}^n x(t_{i-1})^2
&\pto \frac{1-\ee^{-2\lambda^*}}{2\lambda^*}x(0)^2.
\label{ap-eq-0103}
\end{align}
Indeed, \eqref{ap-eq-0101} and \eqref{ap-eq-0103} are obtained by
Lemma \ref{ap-lem1} and 
$\int_0^1 x_0(s)^2 \dd s=\frac{1-\ee^{-2\lambda^*}}{2\lambda^*}x(0)^2$.
\eqref{ap-eq-0102} follows from 
\begin{align}
\epsilon^{-1}
\sum_{i=1}^n 
x(t_{i-1})\EE[\mathcal M_i(\lambda^*)|\GG]
&=0,
\label{ap-eq-0104}
\\
\epsilon^{-2}
\sum_{i=1}^n x(t_{i-1})^2\EE[\mathcal M_i(\lambda^*)^2|\GG]
&=\frac{\Delta_n}{F_n(\lambda^*,\upsilon^*)}\sum_{i=1}^n x(t_{i-1})^2
\pto
(\upsilon^*)^{-\alpha}\frac{1-\ee^{-2\lambda^*}}{2\lambda^*}x(0)^2,
\label{ap-eq-0105}
\\
\epsilon^{-4}
\sum_{i=1}^n x(t_{i-1})^4 \EE[\mathcal M_i(\lambda^*)^4|\GG]
&\lesssim
\biggl(\frac{\Delta_n}{F_n(\lambda^*,\upsilon^*)}\biggr)^2
\sum_{i=1}^n x(t_{i-1})^4
\pto 0
\label{ap-eq-0106}
\end{align}
and Theorems 3.2 and 3.4 in Hall and Heyde \cite{Hall_Heyde1980}.

\begin{proof}[\bf{Proof of Theorem \ref{ap-th1}}]
Let $x(t)=x_1(t)$, $F_n(\lambda)=F_n(\lambda,\lambda)$,
\begin{align*}
V_{1,1}(\lambda,\lambda^*)
&=\lambda^\alpha(\lambda-\lambda^*)^2
\frac{1-\ee^{-2\lambda^*}}{2\lambda^*}x(0)^2,
\\
V_{1,2}(\lambda,\lambda^*)
&=
\biggl(\frac{\lambda}{\lambda^*}\biggr)^\alpha
-\log \lambda^\alpha
+c\lambda^\alpha(\lambda-\lambda^*)^2
\frac{1-\ee^{-2\lambda^*}}{2\lambda^*}x(0)^2,
\end{align*}
where $c=\lim_{n\to\infty,\epsilon\to0}(n\epsilon^2)^{-1}$. 
We have
\begin{align}
V_{n,\epsilon}^{(1)}(\lambda|x)
&=\frac{F_n(\lambda)}{\epsilon^2\Delta_n}
\sum_{i=1}^n \mathcal M_i(\lambda)^2
-n \log F_n(\lambda),
\label{ap-eq-0201}
\\
\partial_\lambda V_{n,\epsilon}^{(1)}(\lambda|x)
&=\frac{F_n'(\lambda)}{\epsilon^2\Delta_n}
\sum_{i=1}^n \mathcal M_i(\lambda)^2
+\frac{2\ee^{-\lambda\Delta_n}F_n(\lambda)}{\epsilon^2}
\sum_{i=1}^n \mathcal M_i(\lambda)x(t_{i-1})
-\frac{n F_n'(\lambda)}{F_n(\lambda)},
\label{ap-eq-0202}
\\
\partial_\lambda^2 V_{n,\epsilon}^{(1)}(\lambda|x)
&=\frac{F_n''(\lambda)}{\epsilon^2\Delta_n}
\sum_{i=1}^n \mathcal M_i(\lambda)^2
+\frac{4\ee^{-\lambda\Delta_n}F_n'(\lambda)}{\epsilon^2}
\sum_{i=1}^n \mathcal M_i(\lambda)x(t_{i-1})
\nonumber
\\
&\qquad+
\frac{2\ee^{-2\lambda\Delta_n}F_n(\lambda)\Delta_n}{\epsilon^2}
\sum_{i=1}^n x(t_{i-1})^2
-\frac{n \{F_n''(\lambda)F_n(\lambda)-F_n'(\lambda)^2\}}{F_n(\lambda)^2}.
\label{ap-eq-0203}
\end{align}
Since
\begin{align*}
F_n'(\lambda)
&=\frac{\alpha }{\lambda^{1-\alpha}}F(2\lambda\Delta_n)
+2\Delta_n\lambda^\alpha F'(2\lambda\Delta_n),
\\
F_n''(\lambda)
&=\frac{\alpha(\alpha-1)}{\lambda^{2-\alpha}}F(2\lambda\Delta_n)
+\frac{4\alpha\Delta_n}{\lambda^{1-\alpha}}F'(2\lambda\Delta_n)
+4\Delta_n^2\lambda^\alpha F''(2\lambda\Delta_n)
\end{align*}
and $F(s)\to1$ as $s\downarrow0$, it follows that
\begin{align}
F_n(\lambda)&\to\lambda^\alpha
\quad\text{ uniformly in } \lambda, 
\label{ap-eq-0204}
\\
F_n'(\lambda)
&\to\frac{\alpha}{\lambda^{1-\alpha}}
\quad\text{ uniformly in } \lambda, 
\label{ap-eq-0205}
\\
F_n''(\lambda)
&\to\frac{\alpha(\alpha-1)}{\lambda^{2-\alpha}}
\quad\text{ uniformly in } \lambda.
\label{ap-eq-0206}
\end{align}

(a) For proving the consistency of $\hat\lambda$, 
it is sufficient to show that under [B1],
\begin{equation}\label{ap-eq-0301}
\sup_{\lambda}
\bigl|
\epsilon^2\{V_{n,\epsilon}^{(1)}(\lambda|x)
-V_{n,\epsilon}^{(1)}(\lambda^*|x)\}
-V_{1,1}(\lambda,\lambda^*)
\bigr|=o_p(1),
\end{equation}
or that under [B2],
\begin{equation}\label{ap-eq-0302}
\sup_{\lambda}
\biggl|
\frac{1}{n}V_{n,\epsilon}^{(1)}(\lambda|x)
-V_{1,2}(\lambda,\lambda^*)
\biggr|=o_p(1).
\end{equation}

\textit{Proof of \eqref{ap-eq-0301}. }
By using \eqref{ap-eq-0201} and the fact that
\begin{align}
\mathcal M_i(\lambda)
&=\mathcal M_i(\lambda^*)
+(\ee^{-\lambda^*\Delta_n}-\ee^{-\lambda\Delta_n})x(t_{i-1}),
\label{ap-eq-0303}
\\
\mathcal M_i(\lambda)^2
&=\mathcal M_i(\lambda^*)^2
+2(\ee^{-\lambda^*\Delta_n}-\ee^{-\lambda\Delta_n})
\mathcal M_i(\lambda^*)x(t_{i-1})
+(\ee^{-\lambda^*\Delta_n}-\ee^{-\lambda\Delta_n})^2x(t_{i-1})^2,
\label{ap-eq-0304}
\end{align}
it follows that under [B1],
\begin{align*}
&\epsilon^2\{V_{n,\epsilon}^{(1)}(\lambda|x)-V_{n,\epsilon}^{(1)}(\lambda^*|x)\}
\\
&=
\Delta_n^{-1}
\sum_{i=1}^n 
\{F_n(\lambda)\mathcal M_i(\lambda)^2
-F_n(\lambda^*)\mathcal M_i(\lambda^*)^2\}
-n\epsilon^2\log\frac{F_n(\lambda)}{F_n(\lambda^*)}
\\
&=
\Delta_n^{-1}\{F_n(\lambda)-F_n(\lambda^*)\}
\sum_{i=1}^n 
\mathcal M_i(\lambda^*)^2
\\
&\qquad+
2F_n(\lambda)\Delta_n^{-1}
(\ee^{-\lambda^*\Delta_n}-\ee^{-\lambda\Delta_n})
\sum_{i=1}^n \mathcal M_i(\lambda^*)x(t_{i-1})
\\
&\qquad+
F_n(\lambda)\Delta_n^{-1}
(\ee^{-\lambda^*\Delta_n}-\ee^{-\lambda\Delta_n})^2
\sum_{i=1}^n x(t_{i-1})^2
-n\epsilon^2\log\frac{F_n(\lambda)}{F_n(\lambda^*)}
\\
&\pto
\lambda^\alpha(\lambda-\lambda^*)^2
\frac{1-\ee^{-2\lambda^*}}{2\lambda^*}x(0)^2
=V_{1,1}(\lambda,\lambda^*)
\end{align*}
uniformly in $\lambda$, where the last convergence holds from 
\eqref{ap-eq-0101}-\eqref{ap-eq-0103}, \eqref{ap-eq-0204},
$\lim_{n\to\infty,\epsilon\to0}n\epsilon^2=0$ and
\begin{equation}
\sup_{\lambda}
\Bigl|\Delta_n^{-1}(\ee^{-\lambda^*\Delta_n}-\ee^{-\lambda\Delta_n})
-(\lambda-\lambda^*)\Bigr|
\to0.
\label{ap-eq-0305}
\end{equation}

\textit{Proof of \eqref{ap-eq-0302}. }
It follows from \eqref{ap-eq-0201}, \eqref{ap-eq-0304},  
\eqref{ap-eq-0101}-\eqref{ap-eq-0103} and \eqref{ap-eq-0305}
that under [B2],
\begin{align*}
\frac{1}{n}V_{n,\epsilon}^{(1)}(\lambda|x)
&=
F_n(\lambda)
\Biggl\{
\frac{1}{\epsilon^2}
\sum_{i=1}^n 
\mathcal M_i(\lambda^*)^2
+\frac{2(\ee^{-\lambda^*\Delta_n}-\ee^{-\lambda\Delta_n})}{\epsilon^2}
\sum_{i=1}^n \mathcal M_i(\lambda^*)x(t_{i-1})
\\
&\qquad\qquad+
\frac{(\ee^{-\lambda^*\Delta_n}-\ee^{-\lambda\Delta_n})^2}{\epsilon^2}
\sum_{i=1}^n x(t_{i-1})^2
\Biggr\}
-\log F_n(\lambda)
\\
&\pto
\lambda^\alpha
\Biggl\{
(\lambda^*)^{-\alpha}
+c(\lambda-\lambda^*)^2
\frac{1-\ee^{-2\lambda^*}}{2\lambda^*}x(0)^2
\Biggr\}-\log \lambda^\alpha
=V_{1,2}(\lambda,\lambda^*)
\end{align*}
uniformly in $\lambda$, 
where $c=\lim_{n\to\infty,\epsilon\to0}(n\epsilon^2)^{-1}$.

(i) For proving the asymptotic normality of $\hat\lambda$, 
it is enough to show that under [B1],
\begin{equation}\label{ap-eq-0401}
\epsilon^2\partial_\lambda^2 V_{n,\epsilon}^{(1)}(\lambda^*|x)
\pto 2 G_1(\lambda^*),
\end{equation}
\begin{equation}\label{ap-eq-0402}
\epsilon^2\sup_{|\lambda-\lambda^*|<\delta_{n,\epsilon}}
\bigl|\partial_\lambda^2 V_{n,\epsilon}^{(1)}(\lambda|x)
-\partial_\lambda^2 V_{n,\epsilon}^{(1)}(\lambda^*|x)\bigr|
=o_p(1)
\ \text{ for }\delta_{n,\epsilon}\to0,
\end{equation}
\begin{equation}\label{ap-eq-0403}
-\epsilon \partial_\lambda V_{n,\epsilon}^{(1)}(\lambda^*|x)
\dto N(0,4G_1(\lambda^*)).
\end{equation}

\textit{Proof of \eqref{ap-eq-0401} and \eqref{ap-eq-0402}. }
By using \eqref{ap-eq-0203}, \eqref{ap-eq-0303}, \eqref{ap-eq-0304}, 
\eqref{ap-eq-0101}-\eqref{ap-eq-0103},
\eqref{ap-eq-0204}-\eqref{ap-eq-0206}
and the fact that $\lim_{n\to\infty,\epsilon\to0}n\epsilon^2=0$,
it holds that under [B1],
\begin{align}
\epsilon^2\partial_\lambda^2 V_{n,\epsilon}^{(1)}(\lambda|x)
&=\frac{F_n''(\lambda)}{\Delta_n}
\sum_{i=1}^n \mathcal M_i(\lambda)^2
+4\ee^{-\lambda\Delta_n}F_n'(\lambda)
\sum_{i=1}^n \mathcal M_i(\lambda)x(t_{i-1})
\nonumber
\\
&\qquad+
2\ee^{-2\lambda\Delta_n}F_n(\lambda)\Delta_n
\sum_{i=1}^n x(t_{i-1})^2
-\frac{n\epsilon^2 \{F_n''(\lambda)F_n(\lambda)-F_n'(\lambda)^2\}}{F_n(\lambda)^2}
\nonumber
\\
&=\frac{F_n''(\lambda)}{\Delta_n}
\sum_{i=1}^n \mathcal M_i(\lambda^*)^2
\nonumber
\\
&\qquad+
\biggl\{
\frac{2F_n''(\lambda)}{\Delta_n}(\ee^{-\lambda^*\Delta_n}-\ee^{-\lambda\Delta_n})
+4\ee^{-\lambda\Delta_n}F_n'(\lambda)
\biggr\}
\sum_{i=1}^n \mathcal M_i(\lambda^*)x(t_{i-1})
\nonumber
\\
&\qquad+
\biggl\{
\frac{F_n''(\lambda)}{\Delta_n}
(\ee^{-\lambda^*\Delta_n}-\ee^{-\lambda\Delta_n})^2
+4\ee^{-\lambda\Delta_n}F_n'(\lambda)(\ee^{-\lambda^*\Delta_n}-\ee^{-\lambda\Delta_n})
\nonumber
\\
&\qquad\qquad+
2\ee^{-2\lambda\Delta_n}F_n(\lambda)\Delta_n
\biggr\}
\sum_{i=1}^n x(t_{i-1})^2
-\frac{n\epsilon^2 \{F_n''(\lambda)F_n(\lambda)-F_n'(\lambda)^2\}}{F_n(\lambda)^2}
\nonumber
\\
&\pto
\biggl\{
\frac{\alpha(1-\alpha)}{\lambda^{2-\alpha}}(\lambda-\lambda^*)^2
+\frac{4\alpha}{\lambda^{1-\alpha}}(\lambda-\lambda^*)
+2\lambda^\alpha\biggr\}
\frac{1-\ee^{-2\lambda^*}}{2\lambda^*}x(0)^2
\label{ap-eq-0404}
\end{align}
uniformly in $\lambda$, and hence \eqref{ap-eq-0401} holds:
\begin{equation*}
\epsilon^2\partial_\lambda^2 V_{n,\epsilon}^{(1)}(\lambda^*|x)
\pto 2(\lambda^*)^\alpha \frac{1-\ee^{-2\lambda^*}}{2\lambda^*}x(0)^2
=2G_1(\lambda^*).
\end{equation*}
Moreover, the limit of \eqref{ap-eq-0404} is continuous with respect to $\lambda$, 
which completes the proof of \eqref{ap-eq-0402}.

\textit{Proof of \eqref{ap-eq-0403}. }
From \eqref{ap-eq-0202}, one has
\begin{align*}
-\epsilon\partial_\lambda V_{n,\epsilon}^{(1)}(\lambda^*|x)
&=-\frac{F_n'(\lambda^*)}{\epsilon\Delta_n}
\sum_{i=1}^n \mathcal M_i(\lambda^*)^2
-\frac{2\ee^{-\lambda^*\Delta_n}F_n(\lambda^*)}{\epsilon}
\sum_{i=1}^n \mathcal M_i(\lambda^*)x(t_{i-1})
+\frac{n\epsilon F_n'(\lambda^*)}{F_n(\lambda^*)}
\\
&=-\frac{F_n'(\lambda^*)}{\epsilon\Delta_n}
\sum_{i=1}^n 
\biggl(\mathcal M_i(\lambda^*)^2-\frac{\epsilon^2\Delta_n}{F_n(\lambda^*)}\biggr)
-\frac{2\ee^{-\lambda^*\Delta_n}F_n(\lambda^*)}{\epsilon}
\sum_{i=1}^n \mathcal M_i(\lambda^*)x(t_{i-1})
\\
&=:
\sum_{i=1}^n \mathcal N_{1,i}(\lambda^*)
+\sum_{i=1}^n \mathcal N_{2,i}(\lambda^*).
\end{align*}
Since it follows from \eqref{ap-eq-0001} that under [B1], 
\begin{align*}
\frac{1}{\epsilon\Delta_n}
\sum_{i=1}^n 
\biggl(
\EE[\mathcal M_i(\lambda^*)^2|\GG]-\frac{\epsilon^2\Delta_n}{F_n(\lambda^*)}
\biggr)
&=0,
\\
\frac{1}{(\epsilon \Delta_n)^2}
\sum_{i=1}^n 
\EE\Biggl[
\biggl(
\mathcal M_i(\lambda^*)^2-\frac{\epsilon^2\Delta_n}{F_n(\lambda^*)}
\biggr)^2
\Bigg|\GG\Biggr]
&\lesssim
\frac{1}{(\epsilon \Delta_n)^2}
\sum_{i=1}^n 
\biggl(\frac{\epsilon^2\Delta_n}{F_n(\lambda^*)}\biggr)^2
=O_p(n\epsilon^2)=o_p(1),
\end{align*}
it holds from Lemma 9 in Genon-Catalot and Jacod \cite{Genon-Catalot_Jacod1993} that
$\sum_{i=1}^n \mathcal N_{1,i}(\lambda^*)\pto0$.
It also holds from \eqref{ap-eq-0102} and \eqref{ap-eq-0204} that
$\sum_{i=1}^n \mathcal N_{2,i}(\lambda^*)
\dto
N(0,4G_1(\lambda^*))$.
Hence, we obtain that
\begin{equation*}
-\epsilon\partial_\lambda V_{n,\epsilon}^{(1)}(\lambda^*|x)
=\sum_{i=1}^n \mathcal N_{1,i}(\lambda^*)
+\sum_{i=1}^n \mathcal N_{2,i}(\lambda^*)
\dto N(0,4G_1(\lambda^*)).
\end{equation*}

(ii) We will prove that the followings hold under [B2].

\begin{equation}\label{ap-eq-0501}
\frac{1}{n}\partial_\lambda^2 V_{n,\epsilon}^{(1)}(\lambda^*|x)
\pto 2 I_1(\lambda^*),
\end{equation}
\begin{equation}\label{ap-eq-0502}
\frac{1}{n}\sup_{|\lambda-\lambda^*|<\delta_{n,\epsilon}}
\bigl|\partial_\lambda^2 V_{n,\epsilon}^{(1)}(\lambda|x)
-\partial_\lambda^2 V_{n,\epsilon}^{(1)}(\lambda^*|x)\bigr|
=o_p(1)
\ \text{ for }\delta_{n,\epsilon}\to0,
\end{equation}
\begin{equation}\label{ap-eq-0503}
-\frac{1}{\sqrt n}\partial_\lambda V_{n,\epsilon}^{(1)}(\lambda^*|x)
\dto N(0,4I_1(\lambda^*)).
\end{equation}

\textit{Proof of \eqref{ap-eq-0501} and \eqref{ap-eq-0502}.}
It follows from
\eqref{ap-eq-0101}-\eqref{ap-eq-0103}, 
\eqref{ap-eq-0204}-\eqref{ap-eq-0206}, 
\eqref{ap-eq-0303}-\eqref{ap-eq-0305}, 
and $c=\lim_{n\to\infty,\epsilon\to0}(n\epsilon^2)^{-1}$ that
under [B2],
\begin{align}
\frac{1}{n}\partial_\lambda^2 V_{n,\epsilon}^{(1)}(\lambda|x)
&=\frac{F_n''(\lambda)}{\epsilon^2}
\sum_{i=1}^n \mathcal M_i(\lambda^*)^2
\nonumber
\\
&\qquad+
\biggl\{
\frac{2F_n''(\lambda)}{\epsilon^2}(\ee^{-\lambda^*\Delta_n}-\ee^{-\lambda\Delta_n})
+\frac{4\ee^{-\lambda\Delta_n}F_n'(\lambda)}{n\epsilon^2}
\biggr\}
\sum_{i=1}^n \mathcal M_i(\lambda^*)x(t_{i-1})
\nonumber
\\
&\qquad+
\biggl\{
\frac{F_n''(\lambda)}{\epsilon^2}
(\ee^{-\lambda^*\Delta_n}-\ee^{-\lambda\Delta_n})^2
+\frac{4\ee^{-\lambda\Delta_n}F_n'(\lambda)}{n\epsilon^2}
(\ee^{-\lambda^*\Delta_n}-\ee^{-\lambda\Delta_n})
\nonumber
\\
&\qquad\qquad+
\frac{2\ee^{-2\lambda\Delta_n}F_n(\lambda)\Delta_n}{n\epsilon^2}
\biggr\}
\sum_{i=1}^n x(t_{i-1})^2
-\frac{F_n''(\lambda)F_n(\lambda)-F_n'(\lambda)^2}{F_n(\lambda)^2}
\nonumber
\\
&\pto
\frac{\alpha(\alpha-1)}{\lambda^{2-\alpha}(\lambda^*)^\alpha}
+c
\biggl\{
\frac{\alpha(\alpha-1)}{\lambda^{2-\alpha}}(\lambda-\lambda^*)^2
\nonumber
\\
&\qquad\qquad
+\frac{4\alpha}{\lambda^{1-\alpha}}(\lambda-\lambda^*)
+2\lambda^\alpha
\biggr\}
\frac{1-\ee^{-2\lambda^*}}{2\lambda^*}x(0)^2
+\frac{\alpha}{\lambda^2}
\label{ap-eq-0504}
\end{align}
uniformly in $\lambda$, and thus we have \eqref{ap-eq-0501}:
\begin{equation*}
\frac{1}{n}\partial_\lambda^2 V_{n,\epsilon}^{(1)}(\lambda^*|x)
\pto
\biggl(\frac{\alpha}{\lambda^*}\biggr)^2
+2c(\lambda^*)^\alpha
\frac{1-\ee^{-2\lambda^*}}{2\lambda^*}x(0)^2
=2I_1(\lambda^*).
\end{equation*}
Moreover, the limit of \eqref{ap-eq-0504} is continuous with respect to $\lambda$, 
which completes the proof of \eqref{ap-eq-0502}.

\textit{Proof of \eqref{ap-eq-0503}. }
We obtain that
\begin{align*}
-\frac{1}{\sqrt n}\partial_\lambda V_{n,\epsilon}^{(1)}(\lambda^*|x)
&=
\frac{F_n'(\lambda^*)}{\epsilon^2\Delta_n\sqrt n}
\sum_{i=1}^n \mathcal M_i(\lambda^*)^2
+\frac{2\ee^{-\lambda\Delta_n}F_n(\lambda^*)}{\epsilon^2\sqrt n}
\sum_{i=1}^n \mathcal M_i(\lambda^*)x(t_{i-1})
-\frac{\sqrt n F_n'(\lambda^*)}{F_n(\lambda^*)}
\\
&=
\frac{F_n'(\lambda^*)}{\epsilon^2\Delta_n\sqrt n}
\sum_{i=1}^n
\biggl(\mathcal M_i(\lambda^*)^2-\frac{\epsilon^2\Delta_n}{F_n(\lambda^*)}\biggr)
+\frac{2\ee^{-\lambda\Delta_n}F_n(\lambda^*)}{\epsilon^2\sqrt n}
\sum_{i=1}^n \mathcal M_i(\lambda^*)x(t_{i-1})
\\
&=:
\sum_{i=1}^n \mathcal N_{3,i}(\lambda^*)
+\sum_{i=1}^n \mathcal N_{4,i}(\lambda^*).
\end{align*}
Since
\begin{align*}
\frac{1}{\epsilon^2 \Delta_n \sqrt n}
\sum_{i=1}^n 
\EE\biggl[\mathcal M_i(\lambda^*)^2-\frac{\epsilon^2\Delta_n}{F_n(\lambda^*)}
\bigg|\GG\biggr]
=0,
\end{align*}
\begin{align*}
&\frac{1}{(\epsilon^2 \Delta_n \sqrt n)^2}
\sum_{i=1}^n 
\EE\Biggl[
\biggl(
\mathcal M_i(\lambda^*)^2-\frac{\epsilon^2\Delta_n}{F_n(\lambda^*)}
\biggr)^2
\Bigg|\GG\Biggr]
\\
&=
\frac{1}{(\epsilon^2 \Delta_n \sqrt n)^2}
\sum_{i=1}^n 
\Biggl\{
\EE[\mathcal M_i(\lambda^*)^4|\GG]
-\frac{2\epsilon^2\Delta_n}{F_n(\lambda^*)}\EE[\mathcal M_i(\lambda^*)^2|\GG]
+\biggl(\frac{\epsilon^2\Delta_n}{F_n(\lambda^*)}\biggr)^2
\Biggr\}
\\
&=
\frac{1}{(\epsilon^2 \Delta_n \sqrt n)^2}
\sum_{i=1}^n 
2\biggl(\frac{\epsilon^2\Delta_n}{F_n(\lambda^*)}\biggr)^2
\pto \frac{2}{(\lambda^*)^{2\alpha}},
\end{align*}
\begin{align*}
&\frac{1}{(\epsilon^2 \Delta_n \sqrt n)^4}
\sum_{i=1}^n 
\EE\Biggl[
\biggl(
\mathcal M_i(\lambda^*)^2-\frac{\epsilon^2\Delta_n}{F_n(\lambda^*)}
\biggr)^4
\Bigg|\GG\Biggr]
\\
&\lesssim
\frac{1}{(\epsilon^2 \Delta_n \sqrt n)^4}
\sum_{i=1}^n 
\Biggl\{
\EE[\mathcal M_i(\lambda^*)^8|\GG]
+\biggl(\frac{\epsilon^2\Delta_n}{F_n(\lambda^*)}\biggr)^4
\Biggr\}
\\
&\lesssim
\frac{1}{(\epsilon^2 \Delta_n \sqrt n)^4}
\sum_{i=1}^n 
\biggl(\frac{\epsilon^2\Delta_n}{F_n(\lambda^*)}\biggr)^4
\pto 0,
\end{align*}
it follows from \eqref{ap-eq-0205} that
\begin{align*}
\sum_{i=1}^n \EE[\mathcal N_{3,i}(\lambda^*)|\GG]&=0,
\\
\sum_{i=1}^n \EE[\mathcal N_{3,i}(\lambda^*)^2|\GG]
&\pto\biggl(\frac{\alpha}{(\lambda^*)^{1-\alpha}}\biggr)^2 
\frac{2}{(\lambda^*)^{2\alpha}}
=4H_1(\lambda^*),
\\
\sum_{i=1}^n \EE[\mathcal N_{3,i}(\lambda^*)^4|\GG]&\pto0.
\end{align*}
Furthermore, it holds from \eqref{ap-eq-0104}-\eqref{ap-eq-0106},
\eqref{ap-eq-0204} and $c=\lim_{n\to\infty,\epsilon\to0}(n\epsilon^2)^{-1}$ 
that under [B2], 
\begin{align*}
\sum_{i=1}^n \EE[\mathcal N_{4,i}(\lambda^*)|\GG]&=0,
\\
\sum_{i=1}^n \EE[\mathcal N_{4,i}(\lambda^*)^2|\GG]
&\pto4c(\lambda^*)^{2\alpha} \frac{G_1(\lambda^*)}{(\lambda^*)^{2\alpha}}
=4c G_1(\lambda^*),
\\
\sum_{i=1}^n \EE[\mathcal N_{4,i}(\lambda^*)^4|\GG]&\pto0.
\end{align*}
Therefore, noting that
$\sum_{i=1}^n \EE[\mathcal N_{3,i}(\lambda^*)\mathcal N_{4,i}(\lambda^*)|\GG]=0$
and $I_1(\lambda)=H_1(\lambda)+c G_1(\lambda)$, one has 
from Theorems 3.2 and 3.4 in Hall and Heyde \cite{Hall_Heyde1980} that
\begin{align*}
-\frac{1}{\sqrt n}\partial_\lambda V_{n,\epsilon}^{(1)}(\lambda^*|x)
=\sum_{i=1}^n \mathcal N_{3,i}(\lambda^*)
+\sum_{i=1}^n \mathcal N_{4,i}(\lambda^*)
\dto 
N(0,4I_1(\lambda^*)).
\end{align*}
\end{proof}

\begin{proof}[\bf{Proof of Theorem \ref{ap-th2}}]
Let $x(t)=x_2(t)$.
$V_{n,\epsilon}^{(2)}(\lambda,\mu|x)$ and its derivatives 
with respect to $(\lambda,\mu)$ up to second order  
can be expressed by using $F_n(\lambda,\mu)$ and $\mathcal M_i(\lambda)$ as follows.
\begin{align}
V_{n,\epsilon}^{(2)}(\lambda,\mu|x)
&=\frac{F_n(\lambda,\mu)}{\epsilon^2\Delta_n}
\sum_{i=1}^n \mathcal M_i(\lambda)^2
-n \log F_n(\lambda,\mu),
\label{ap-eq-0601}
\\
\partial_\lambda V_{n,\epsilon}^{(2)}(\lambda,\mu|x)
&=\frac{\partial_\lambda F_n(\lambda,\mu)}{\epsilon^2\Delta_n}
\sum_{i=1}^n \mathcal M_i(\lambda)^2
+\frac{2\ee^{-\lambda\Delta_n}F_n(\lambda,\mu)}{\epsilon^2}
\sum_{i=1}^n \mathcal M_i(\lambda)x(t_{i-1})
\nonumber
\\
&\qquad
-\frac{n \partial_\lambda F_n(\lambda,\mu)}{F_n(\lambda,\mu)},
\label{ap-eq-0602}
\\
\partial_\mu V_{n,\epsilon}^{(2)}(\lambda,\mu|x)
&=\frac{\partial_\mu F_n(\lambda,\mu)}{\epsilon^2\Delta_n}
\sum_{i=1}^n \mathcal M_i(\lambda)^2
-\frac{n \partial_\mu F_n(\lambda,\mu)}{F_n(\lambda,\mu)},
\label{ap-eq-0603}
\\
\partial_\lambda^2 V_{n,\epsilon}^{(2)}(\lambda,\mu|x)
&=\frac{\partial_\lambda^2 F_n(\lambda,\mu)}{\epsilon^2\Delta_n}
\sum_{i=1}^n \mathcal M_i(\lambda)^2
+\frac{4\ee^{-\lambda\Delta_n}\partial_\lambda F_n(\lambda,\mu)}{\epsilon^2}
\sum_{i=1}^n \mathcal M_i(\lambda)x(t_{i-1})
\nonumber
\\
&\qquad+
\frac{2\ee^{-2\lambda\Delta_n}F_n(\lambda,\mu)\Delta_n}{\epsilon^2}
\sum_{i=1}^n x(t_{i-1})^2
-n \partial_\lambda^2 \log F_n(\lambda,\mu),
\label{ap-eq-0604}
\\
\partial_\mu^2 V_{n,\epsilon}^{(2)}(\lambda,\mu|x)
&=\frac{\partial_\mu^2 F_n(\lambda,\mu)}{\epsilon^2\Delta_n}
\sum_{i=1}^n \mathcal M_i(\lambda)^2
-n\partial_\mu^2 \log F_n(\lambda,\mu),
\label{ap-eq-0605}
\\
\partial_\mu\partial_\lambda V_{n,\epsilon}^{(2)}(\lambda,\mu|x)
&=\frac{\partial_\mu\partial_\lambda F_n(\lambda,\mu)}{\epsilon^2\Delta_n}
\sum_{i=1}^n \mathcal M_i(\lambda)^2
+\frac{2\ee^{-\lambda\Delta_n}\partial_\mu F_n(\lambda,\mu)}{\epsilon^2}
\sum_{i=1}^n \mathcal M_i(\lambda)x(t_{i-1})
\nonumber
\\
&\qquad
-n\partial_\mu\partial_\lambda \log F_n(\lambda,\mu),
\label{ap-eq-0606}
\end{align}
where the derivatives of $F_n(\lambda,\mu)$ are given by 
\begin{align*}
\partial_\lambda F_n(\lambda,\mu)
&=2\Delta_n\mu^\alpha F'(2\lambda\Delta_n),
\quad
\partial_\mu F_n(\lambda,\mu)
=\alpha\mu^{\alpha-1} F(2\lambda\Delta_n),
\\
\partial_\lambda^2 F_n(\lambda,\mu)
&=4\Delta_n^2 \mu^\alpha F''(2\lambda\Delta_n),
\quad
\partial_\mu \partial_\lambda F_n(\lambda,\mu)
=2\alpha\Delta_n\mu^{\alpha-1} F'(2\lambda\Delta_n),
\\
\partial_\mu^2 F_n(\lambda,\mu)
&=\alpha(\alpha-1)\mu^{\alpha-2}F(2\lambda\Delta_n)
\end{align*}
and it follows from $F(s)\to1$, $F'(s)\to1/2$ and $F''(s)\to1/6$ as $s\downarrow0$ that
\begin{align}
F_n(\lambda,\mu) &\to \mu^\alpha
\quad\text{uniformly in }(\lambda,\mu),
\label{ap-eq-0607}
\\
\Delta_n^{-1}\partial_\lambda F_n(\lambda,\mu) &\to \mu^\alpha
\quad\text{uniformly in }(\lambda,\mu),
\label{ap-eq-0608}
\\
\partial_\mu F_n(\lambda,\mu) &\to \alpha\mu^{\alpha-1},
\quad\text{uniformly in }(\lambda,\mu),
\label{ap-eq-0609}
\\
\Delta_n^{-2}\partial_\lambda^2 F_n(\lambda,\mu) &\to \frac{2\mu^\alpha}{3}
\quad\text{uniformly in }(\lambda,\mu),
\label{ap-eq-0610}
\\
\Delta_n^{-1}\partial_\mu\partial_\lambda F_n(\lambda,\mu)
&\to \alpha\mu^{\alpha-1}
\quad\text{uniformly in }(\lambda,\mu),
\label{ap-eq-0611}
\\
\partial_\mu^2 F_n(\lambda,\mu) &\to \alpha(\alpha-1)\mu^{\alpha-2}
\quad\text{uniformly in }(\lambda,\mu).
\label{ap-eq-0612}
\end{align}

We first prove $(2)$.

(2) Let
\begin{align*}
V_{2,1}(\lambda,\mu,\lambda^*)
&=\mu^\alpha(\lambda-\lambda^*)^2
\frac{1-\ee^{-2\lambda^*}}{2\lambda^*}x(0)^2,
\\
V_{2,2}(\mu,\mu^*)
&=\biggl(\frac{\mu}{\mu^*}\biggr)^\alpha-1
-\log\biggl(\frac{\mu}{\mu^*}\biggr)^\alpha,
\\
C_{n,\epsilon}^{(2)}(\lambda,\mu|x)
&=
\begin{pmatrix}
\epsilon^2 \partial_\lambda^2 V_{n,\epsilon}^{(2)}(\lambda,\mu|x) 
& \frac{\epsilon}{\sqrt n}
\partial_\lambda \partial_\mu V_{n,\epsilon}^{(2)}(\lambda,\mu|x)
\\
\frac{\epsilon}{\sqrt n}
\partial_\mu \partial_\lambda V_{n,\epsilon}^{(2)}(\lambda,\mu|x)
& \frac{1}{n} \partial_\mu^2 V_{n,\epsilon}^{(2)}(\lambda,\mu|x)
\end{pmatrix},
\\
K_{n,\epsilon}^{(2)}(\lambda,\mu|x)
&=
\begin{pmatrix}
-\epsilon\partial_\lambda V_{n,\epsilon}^{(2)}(\lambda,\mu|x)
\\
-\frac{1}{\sqrt n}\partial_\mu V_{n,\epsilon}^{(2)}(\lambda,\mu|x)
\end{pmatrix}.
\end{align*}
We set $\lambda_u=\lambda^*+u(\tilde\lambda-\lambda^*)$ 
and $\mu_u=\mu^*+u(\tilde\mu-\mu^*)$ for $u\in[0,1]$.
For the proof of the asymptotic normality of $(\tilde\lambda,\tilde\mu)$, 
we will show that
\begin{equation}\label{ap-eq-0701}
\sup_{\lambda,\mu}
\bigl|
\epsilon^2\{V_{n,\epsilon}^{(2)}(\lambda,\mu|x)
-V_{n,\epsilon}^{(2)}(\lambda^*,\mu|x)\}
-V_{2,1}(\lambda,\mu,\lambda^*)
\bigr|=o_p(1),
\end{equation}
\begin{equation}\label{ap-eq-0702}
\sup_{\mu}
\biggl|
\frac{1}{n}\{V_{n,\epsilon}^{(2)}(\tilde\lambda,\mu|x)
-V_{n,\epsilon}^{(2)}(\tilde\lambda,\mu^*|x)\}
-V_{2,2}(\mu,\mu^*)
\biggr|=o_p(1),
\end{equation}
\begin{equation}\label{ap-eq-0703}
\sup_{u\in[0,1]}
\bigl|C_{n,\epsilon}^{(2)}(\lambda_u,\mu_u|x)-2I_2(\lambda^*,\mu^*)\bigr|
=o_p(1),
\end{equation}
\begin{equation}\label{ap-eq-0704}
K_{n,\epsilon}^{(2)}(\lambda^*,\mu^*|x) \dto N(0,4I_2(\lambda^*,\mu^*)).
\end{equation}

\textit{Proof of \eqref{ap-eq-0701}. }
By using 
\eqref{ap-eq-0601} and the fact that
\begin{equation*}
\sup_{\lambda}
\biggl|
n\log\frac{F(2\lambda\Delta_n)}{F(2\lambda^*\Delta_n)}
\biggr|\lesssim 1,
\end{equation*}
it follows from \eqref{ap-eq-0607} and \eqref{ap-eq-0305} that
\begin{align*}
&\epsilon^2\{V_{n,\epsilon}^{(2)}(\lambda,\mu|x)-V_{n,\epsilon}^{(2)}(\lambda^*,\mu|x)\}
\\
&=
\Delta_n^{-1}
\sum_{i=1}^n 
\{F_n(\lambda,\mu)\mathcal M_i(\lambda)^2
-F_n(\lambda^*,\mu)\mathcal M_i(\lambda^*)^2\}
-n\epsilon^2\log\frac{F_n(\lambda,\mu)}{F_n(\lambda^*,\mu)}
\\
&=
\Delta_n^{-1}\{F_n(\lambda,\mu)-F_n(\lambda^*,\mu)\}
\sum_{i=1}^n 
\mathcal M_i(\lambda^*)^2
\\
&\qquad+
2F_n(\lambda,\mu)\Delta_n^{-1}
(\ee^{-\lambda^*\Delta_n}-\ee^{-\lambda\Delta_n})
\sum_{i=1}^n \mathcal M_i(\lambda^*)x(t_{i-1})
\\
&\qquad+
F_n(\lambda,\mu)\Delta_n^{-1}
(\ee^{-\lambda^*\Delta_n}-\ee^{-\lambda\Delta_n})^2
\sum_{i=1}^n x(t_{i-1})^2
-n\epsilon^2\log\frac{F(2\lambda\Delta_n)}{F(2\lambda^*\Delta_n)}
\\
&\pto
\mu^\alpha(\lambda-\lambda^*)^2
\frac{1-\ee^{-2\lambda^*}}{2\lambda^*}x(0)^2
=V_{2,1}(\lambda,\mu,\lambda^*)
\end{align*}
uniformly in $(\lambda,\mu)$. 

\textit{Proof of \eqref{ap-eq-0702}. }
We first show that $\epsilon^{-1}(\tilde\lambda-\lambda^*)=O_p(1)$.
By using the Taylor expansion,
\begin{equation*}
-\partial_\lambda V_{n,\epsilon}^{(2)}(\lambda^*,\tilde\mu|x)
=\partial_\lambda V_{n,\epsilon}^{(2)}(\tilde\lambda,\tilde\mu|x)
-\partial_\lambda V_{n,\epsilon}^{(2)}(\lambda^*,\tilde\mu|x)
=\int_0^1 \partial_\lambda^2 V_{n,\epsilon}^{(2)}(\lambda_u,\tilde\mu|x)\dd u
(\tilde\lambda-\lambda^*),
\end{equation*}
i.e.,
\begin{equation*}
-\epsilon\partial_\lambda V_{n,\epsilon}^{(2)}(\lambda^*,\tilde\mu|x)
=\epsilon^2\int_0^1 \partial_\lambda^2 V_{n,\epsilon}^{(2)}(\lambda_u,\tilde\mu|x)\dd u
\,\epsilon^{-1}(\tilde\lambda-\lambda^*).
\end{equation*}
Since 
$\inf_\mu|G_2(\lambda^*,\mu)|>0$ and $\tilde\lambda$ has consistency, 
the tightness
$\epsilon^{-1}(\tilde\lambda-\lambda^*)=O_p(1)$ follows from the following properties.
\begin{equation}\label{ap-eq-0705}
\epsilon\sup_{\mu}|\partial_\lambda V_{n,\epsilon}^{(2)}(\lambda^*,\mu|x)|=O_p(1),
\end{equation}
\begin{equation}\label{ap-eq-0706}
\sup_{\lambda,\mu}
\Bigl|\epsilon^2\partial_\lambda^2 V_{n,\epsilon}^{(2)}(\lambda,\mu|x)
-2G_2(\lambda^*,\mu)\Bigr|
\pto0.
\end{equation}

\textit{Proof of \eqref{ap-eq-0705}. }
Setting 
$-\epsilon\partial_\lambda V_{n,\epsilon}^{(2)}(\lambda,\mu|x)
=\sum_{i=1}^n \mathcal K_{i}(\lambda,\mu)$,
we have
\begin{align*}
\sum_{i=1}^n \mathcal K_{i}(\lambda^*,\mu)
&=-\frac{\partial_\lambda F_n(\lambda^*,\mu)}{\epsilon\Delta_n}
\sum_{i=1}^n
\biggl(\mathcal M_i(\lambda^*)^2-\frac{\epsilon^2\Delta_n}
{F_n(\lambda^*,\mu^*)}\biggr)
\\
&\qquad
-\frac{2\ee^{-\lambda^*\Delta_n}F_n(\lambda^*,\mu)}{\epsilon}
\sum_{i=1}^n \mathcal M_i(\lambda^*)x(t_{i-1})
\\
&\qquad
-\frac{\epsilon\partial_\lambda F_n(\lambda^*,\mu)}{\Delta_n}
\biggl(\frac{1}{F_n(\lambda^*,\mu^*)}-\frac{1}{F_n(\lambda^*,\mu)}\biggr)
\\
&=:
\sum_{i=1}^n \mathcal K_{1,i}(\lambda^*,\mu)
+\sum_{i=1}^n \mathcal K_{2,i}(\lambda^*,\mu)
+\mathcal K_{3,n}(\lambda^*,\mu).
\end{align*}
It follows from
\begin{align*}
\epsilon^{-1}\sum_{i=1}^n 
\EE\biggl[\mathcal M_i(\lambda^*)^2-\frac{\epsilon^2\Delta_n}{F_n(\lambda^*,\mu^*)}
\bigg|\GG\biggr]
&=0,
\\
\epsilon^{-2}\sum_{i=1}^n 
\EE\Biggl[
\biggl(
\mathcal M_i(\lambda^*)^2-\frac{\epsilon^2\Delta_n}{F_n(\lambda^*,\mu^*)}
\biggr)^2
\Bigg|\GG\Biggr]
&\lesssim
\epsilon^{-2}\sum_{i=1}^n 
\biggl(\frac{\epsilon^2\Delta_n}{F_n(\lambda^*,\mu^*)}\biggr)^2
\pto0,
\end{align*}
\eqref{ap-eq-0608} and 
Lemma 9 in Genon-Catalot and Jacod \cite{Genon-Catalot_Jacod1993}
that $\sup_{\mu}|\sum_{i=1}^n \mathcal K_{1,i}(\lambda^*,\mu)|=o_p(1)$.
It also holds from \eqref{ap-eq-0102} and \eqref{ap-eq-0607} that
$\sup_\mu|\sum_{i=1}^n \mathcal K_{2,i}(\lambda^*,\mu)|=O_p(1)$.
Furthermore, it follows from \eqref{ap-eq-0607} and \eqref{ap-eq-0608} 
that $\sup_{\mu}|\mathcal K_{3,n}(\lambda^*,\mu)|=o(1)$. 
Therefore, the proof of \eqref{ap-eq-0705} is completed.

\textit{Proof of \eqref{ap-eq-0706}.}
It holds from \eqref{ap-eq-0604}, \eqref{ap-eq-0303} and \eqref{ap-eq-0304} that
\begin{align}
\epsilon^2\partial_\lambda^2 V_{n,\epsilon}^{(2)}(\lambda,\mu|x)
&=\frac{\partial_\lambda^2 F_n(\lambda,\mu)}{\Delta_n}
\sum_{i=1}^n \mathcal M_i(\lambda)^2
+4\ee^{-\lambda\Delta_n}\partial_\lambda F_n(\lambda,\mu)
\sum_{i=1}^n \mathcal M_i(\lambda)x(t_{i-1})
\nonumber
\\
&\qquad+
2\ee^{-2\lambda\Delta_n}F_n(\lambda,\mu)\Delta_n
\sum_{i=1}^n x(t_{i-1})^2
-n\epsilon^2 \partial_\lambda^2 \log F_n(\lambda,\mu)
\nonumber
\\
&=\frac{\partial_\lambda^2 F_n(\lambda,\mu)}{\Delta_n}
\sum_{i=1}^n 
\mathcal M_i(\lambda^*)^2
\nonumber
\\
&\qquad+
\biggl\{
2(\ee^{-\lambda^*\Delta_n}-\ee^{-\lambda\Delta_n})
\frac{\partial_\lambda^2 F_n(\lambda,\mu)}{\Delta_n}
\nonumber
\\
&\qquad\qquad+
4\ee^{-\lambda\Delta_n}\partial_\lambda F_n(\lambda,\mu)
\biggr\}
\sum_{i=1}^n \mathcal M_i(\lambda^*)x(t_{i-1})
\nonumber
\\
&\qquad+
\biggl\{
(\ee^{-\lambda^*\Delta_n}-\ee^{-\lambda\Delta_n})^2
\frac{\partial_\lambda^2 F_n(\lambda,\mu)}{\Delta_n}
\nonumber
\\
&\qquad\qquad+
4\ee^{-\lambda\Delta_n}\partial_\lambda F_n(\lambda,\mu)
(\ee^{-\lambda^*\Delta_n}-\ee^{-\lambda\Delta_n})
\nonumber
\\
&\qquad\qquad
+2\ee^{-2\lambda\Delta_n}F_n(\lambda,\mu)\Delta_n
\biggr\}
\sum_{i=1}^n x(t_{i-1})^2
-n\epsilon^2 \partial_\lambda^2 \log F_n(\lambda,\mu)
\nonumber
\\
&\pto 2\mu^\alpha
\frac{1-\ee^{-2\lambda^*}}{2\lambda^*}x(0)^2
=2G_2(\lambda^*,\mu)
\label{ap-eq-0707}
\end{align}
uniformly in $(\lambda,\mu)$, 
where the last convergence follows from 
\eqref{ap-eq-0101}-\eqref{ap-eq-0103}, \eqref{ap-eq-0305}, 
\eqref{ap-eq-0607}, \eqref{ap-eq-0608} and \eqref{ap-eq-0610}.
This concludes the proof of \eqref{ap-eq-0706}.

Let us begin the proof of \eqref{ap-eq-0702}.
It follows from \eqref{ap-eq-0101}-\eqref{ap-eq-0103},
\begin{equation}
|\ee^{-\lambda^*\Delta_n}-\ee^{-\lambda\Delta_n}|
\lesssim
|\lambda-\lambda^*|\Delta_n,
\label{ap-eq-0708}
\end{equation}
and the tightness of $\epsilon^{-1}(\tilde\lambda-\lambda^*)$ that
\begin{align*}
&\frac{1}{n}\{
V_{n,\epsilon}^{(2)}(\tilde\lambda,\mu|x)
-V_{n,\epsilon}^{(2)}(\tilde\lambda,\mu^*|x)\}
\\
&=
\epsilon^{-2}\{\mu^\alpha-(\mu^*)^\alpha\}
\sum_{i=1}^n 
\mathcal M_i(\tilde\lambda)^2-\log\biggl(\frac{\mu}{\mu^*}\biggr)^\alpha
\\
&=
\{\mu^\alpha-(\mu^*)^\alpha\}
\Biggl\{
\epsilon^{-2}\sum_{i=1}^n \mathcal M_i(\lambda^*)^2
\\
&\qquad+
2(\ee^{-\lambda^*\Delta_n}-\ee^{-\tilde\lambda\Delta_n})
\epsilon^{-2}\sum_{i=1}^n x(t_{i-1})\mathcal M_i(\lambda^*)
\\
&\qquad+
(\ee^{-\lambda^*\Delta_n}-\ee^{-\tilde\lambda\Delta_n})^2
\epsilon^{-2}\sum_{i=1}^n x(t_{i-1})^2
\Biggr\}
-\log\biggl(\frac{\mu}{\mu^*}\biggr)^\alpha
\\
&\pto
\biggl(\frac{\mu}{\mu^*}\biggr)^\alpha-1
-\log\biggl(\frac{\mu}{\mu^*}\biggr)^\alpha
=V_{2,2}(\mu,\mu^*)
\end{align*}
uniformly in $\mu$.

\textit{Proof of \eqref{ap-eq-0703}.}
It follows from  \eqref{ap-eq-0707}, the consistency of $\tilde\mu$ 
and the continuity of $G_2(\lambda,\mu)$ with respect to $\mu$ that
\begin{equation*}
\epsilon^2\partial_\lambda^2 V_{n,\epsilon}^{(2)}(\lambda_u,\mu_u|x)
\pto 2G_2(\lambda^*,\mu^*)
\end{equation*}
uniformly in $u\in[0,1]$.
On the other hand, it holds 
from \eqref{ap-eq-0605}, \eqref{ap-eq-0101}-\eqref{ap-eq-0103}, 
\eqref{ap-eq-0612}, \eqref{ap-eq-0708} and 
\begin{equation*}
\partial_\mu^2 \log F_n(\lambda,\mu)\to -\frac{\alpha}{\mu^2}
\quad\text{uniformly in }(\lambda,\mu)
\end{equation*}
that
\begin{align*}
\frac{1}{n}\partial_\mu^2 V_{n,\epsilon}^{(2)}(\lambda_u,\mu_u|x)
&=\frac{\partial_\mu^2 F_n(\lambda_u,\mu_u)}{\epsilon^2}
\sum_{i=1}^n \mathcal M_i(\lambda_u)^2
-\partial_\mu^2 \log F_n(\lambda_u,\mu_u)
\\
&=
\partial_\mu^2 F_n(\lambda_u,\mu_u)
\Biggl\{
\epsilon^{-2}
\sum_{i=1}^n 
\mathcal M_i(\lambda^*)^2
\\
&\qquad+
\frac{2(\ee^{-\lambda^*\Delta_n}-\ee^{-\lambda_u\Delta_n})}{\epsilon^2}
\sum_{i=1}^n \mathcal M_i(\lambda^*)x(t_{i-1})
\\
&\qquad+
\frac{(\ee^{-\lambda^*\Delta_n}-\ee^{-\lambda_u\Delta_n})^2}{\epsilon^2}
\sum_{i=1}^n x(t_{i-1})^2
\Biggr\}
\\
&\qquad-
\partial_\mu^2 \log F_n(\lambda_u,\mu_u)
\\
&\pto
\frac{\alpha(\alpha-1)}{(\mu^*)^2}+\frac{\alpha}{(\mu^*)^2}
=2H_2(\mu^*)
\end{align*}
uniformly in $u\in[0,1]$. Moreover, it follows
from \eqref{ap-eq-0606}, \eqref{ap-eq-0101}-\eqref{ap-eq-0103}, 
\eqref{ap-eq-0608}, \eqref{ap-eq-0609}, \eqref{ap-eq-0611} and \eqref{ap-eq-0708} that
\begin{align*}
\frac{\epsilon}{\sqrt n}
\partial_\mu\partial_\lambda V_{n,\epsilon}^{(2)}(\lambda_u,\mu_u|x)
&=\frac{\partial_\mu\partial_\lambda F_n(\lambda_u,\mu_u)}{\epsilon\Delta_n\sqrt n}
\sum_{i=1}^n \mathcal M_i(\lambda_u)^2
\\
&\qquad
+\frac{2\ee^{-\lambda_u\Delta_n}\partial_\mu F_n(\lambda_u,\mu_u)}{\epsilon\sqrt n}
\sum_{i=1}^n \mathcal M_i(\lambda_u)x(t_{i-1})
\\
&\qquad
-\epsilon\sqrt n \partial_\mu\partial_\lambda \log F_n(\lambda_u,\mu_u)
\\
&=\frac{\partial_\mu\partial_\lambda F_n(\lambda_u,\mu_u)}{\epsilon\Delta_n\sqrt n}
\sum_{i=1}^n \mathcal M_i(\lambda^*)^2
\\
&\qquad+
\biggl\{
\frac{2\partial_\mu\partial_\lambda F_n(\lambda_u,\mu_u)}{\epsilon\Delta_n\sqrt n}
(\ee^{-\lambda^*\Delta_n}-\ee^{-\lambda_u\Delta_n})
\\
&\qquad\qquad
+\frac{2\ee^{-\lambda_u\Delta_n}\partial_\mu F_n(\lambda_u,\mu_u)}{\epsilon\sqrt n}
\biggr\}
\sum_{i=1}^n 
\mathcal M_i(\lambda^*)x(t_{i-1})
\\
&\qquad+
\biggl\{
\frac{2\partial_\mu\partial_\lambda F_n(\lambda_u,\mu_u)}{\epsilon\Delta_n\sqrt n}
(\ee^{-\lambda^*\Delta_n}-\ee^{-\lambda_u\Delta_n})^2
\\
&\qquad\qquad+
\frac{2\ee^{-\lambda_u\Delta_n}\partial_\mu F_n(\lambda_u,\mu_u)}{\epsilon\sqrt n}
(\ee^{-\lambda^*\Delta_n}-\ee^{-\lambda_u\Delta_n})
\biggr\}
\sum_{i=1}^n x(t_{i-1})^2
\\
&\qquad-
\epsilon\sqrt n \partial_\mu\partial_\lambda \log F_n(\lambda_u,\mu_u)
\\
&\pto0
\end{align*}
uniformly in $u\in[0,1]$. 
This completes the proof of \eqref{ap-eq-0703}.

\textit{Proof of \eqref{ap-eq-0704}.}
From
\eqref{ap-eq-0104}-\eqref{ap-eq-0106} and 
the proof of \eqref{ap-eq-0705}, 
we obtain
\begin{equation*}
-\epsilon\partial_\lambda V_{n,\epsilon}^{(2)}(\lambda^*,\mu^*|x)
=\sum_{i=1}^n \mathcal K_{2,i}(\lambda^*,\mu^*)+o_p(1),
\end{equation*}
where
\begin{align*}
\sum_{i=1}^n \EE[\mathcal K_{2,i}(\lambda^*,\mu^*)|\GG]&=0,
\\
\sum_{i=1}^n \EE[\mathcal K_{2,i}(\lambda^*,\mu^*)^2|\GG]
&\pto4G_2(\lambda^*,\mu^*),
\\
\sum_{i=1}^n \EE[\mathcal K_{2,i}(\lambda^*,\mu^*)^4|\GG]&\pto0.
\end{align*}
Setting
$-\frac{1}{\sqrt n}\partial_\mu V_{n,\epsilon}^{(2)}(\lambda,\mu|x)
=\sum_{i=1}^n \mathcal L_{i}(\lambda,\mu)$, 
one has
\begin{equation*}
\sum_{i=1}^n \mathcal L_{i}(\lambda^*,\mu^*)
=-\frac{\partial_\mu F_n(\lambda^*,\mu^*)}{\epsilon^2\Delta_n\sqrt n}
\sum_{i=1}^n
\biggl(\mathcal M_i(\lambda^*)^2-\frac{\epsilon^2\Delta_n}{F_n(\lambda^*,\mu^*)}\biggr).
\end{equation*}
Noting that
\begin{equation*}
\frac{1}{\epsilon^2 \Delta_n \sqrt n}
\sum_{i=1}^n 
\EE\biggl[\mathcal M_i(\lambda^*)^2-\frac{\epsilon^2\Delta_n}{F_n(\lambda^*,\mu^*)}
\bigg|\GG\biggr]
=0,
\end{equation*}
\begin{align*}
&\frac{1}{(\epsilon^2 \Delta_n \sqrt n)^2}
\sum_{i=1}^n 
\EE\Biggl[
\biggl(
\mathcal M_i(\lambda^*)^2-\frac{\epsilon^2\Delta_n}{F_n(\lambda^*,\mu^*)}
\biggr)^2
\Bigg|\GG\Biggr]
\\
&=
\frac{1}{(\epsilon^2 \Delta_n \sqrt n)^2}
\sum_{i=1}^n 
2\biggl(\frac{\epsilon^2\Delta_n}{F_n(\lambda^*,\mu^*)}\biggr)^2
\pto \frac{2}{(\mu^*)^{2\alpha}},
\end{align*}
\begin{align*}
&\frac{1}{(\epsilon^2 \Delta_n \sqrt n)^4}
\sum_{i=1}^n 
\EE\Biggl[
\biggl(
\mathcal M_i(\lambda^*)^2-\frac{\epsilon^2\Delta_n}{F_n(\lambda^*,\mu^*)}
\biggr)^4
\Bigg|\GG\Biggr]
\\
&\lesssim
\frac{1}{(\epsilon^2 \Delta_n \sqrt n)^4}
\sum_{i=1}^n 
\biggl(\frac{\epsilon^2\Delta_n}{F_n(\lambda^*,\mu^*)}\biggr)^4
\pto 0,
\end{align*}
we see from \eqref{ap-eq-0609} that
\begin{align*}
\sum_{i=1}^n \EE[\mathcal L_{i}(\lambda^*,\mu^*)|\GG]&=0,
\\
\sum_{i=1}^n \EE[\mathcal L_{i}(\lambda^*,\mu^*)^2|\GG]
&\pto 4H_2(\lambda^*,\mu^*),
\\
\sum_{i=1}^n \EE[\mathcal L_{i}(\lambda^*,\mu^*)^4|\GG]&\pto0.
\end{align*}
It follows from
\begin{equation*}
\frac{1}{\epsilon^3 \Delta_n \sqrt n}
\sum_{i=1}^n 
x(t_{i-1})\EE\Biggl[\mathcal M_i(\lambda^*)
\biggl(
\mathcal M_i(\lambda^*)^2-\frac{\epsilon^2\Delta_n}{F_n(\zeta^*)}
\biggr)
\Bigg|\GG\Biggr]=0
\end{equation*}
that
\begin{equation*}
\sum_{i=1}^n \EE[\mathcal K_{2,i}(\lambda^*,\mu^*)\mathcal L_{i}(\lambda^*,\mu^*)|\GG]
=0.
\end{equation*}
Therefore, we obtain that
\begin{equation*}
K_{n,\epsilon}^{(2)}(\lambda^*,\mu^*|x)
=
\begin{pmatrix}
\sum_{i=1}^n \mathcal K_{2,i}(\lambda^*,\mu^*)
\\
\sum_{i=1}^n \mathcal L_{i}(\lambda^*,\mu^*)
\end{pmatrix}
+o_p(1)
\dto N(0,4I_2(\lambda^*,\mu^*)).
\end{equation*}

(1) In the case where $\mu_0$ is known, 
the asymptotic normality of $\tilde\lambda$ can be shown 
in the same way as the proof of (2).
\end{proof}

\section*{Appendix II}  \label{appendix2}

In order to 
illustrate the properties 
of the parameters 
in SPDE \eqref{small_2d_spde}
driven by the $Q_1$-Wiener process defined as \eqref{QWp_ver1},
we generate sample paths
with different values of the parameters.

\subsection*{Characteristic of $\lambda_{1,1}$}

Figures \ref{fig1}-\ref{fig4} show the sample paths
with $\epsilon = 0.01$ and the initial condition $\xi(y,z)=30y(1-y)z(1-z)$.
The rough shape of the sample path depends on the value of $\lambda_{1,1}$.

\begin{en-text}
Figures \ref{fig1}-\ref{fig2} are the sample paths
with 
$\theta=(\theta_0,\theta_1,\eta_1,\theta_2)=(2,0.1,0.1,0.1) $ and $\lambda_{1,1} \approx 0$.
Figure \ref{fig1}-(A) is a cross-section of a sample path at $t=0.1$, 
Figure \ref{fig1}-(B) is a cross-section of a sample path at $t=0.5$ and 
Figure \ref{fig1}-(C) is a cross-section of a sample path at $t=0.9$.
Figure \ref{fig2}-(A) is a cross-section of a sample path at $y=0.5$ and 
Figure \ref{fig2}-(B) is a cross-section of a sample path at $z=0.5$.
\end{en-text}

Figures \ref{fig1}-(A), (B), (C) and
Figures \ref{fig2}-(A), (B) are the cross sections at
$t=0.1, 0.5, 0.9$ and  $y=0.5$, $z=0.5$, respectively. 
For example, the cross section at $y=0.5$ in Figure \ref{fig1}-(A) corresponds to the cross section at $t=0.1$ 
in Figure \ref{fig2}-(A), 
and the cross section at $z=0.5$ in Figure \ref{fig1}-(A) corresponds to the cross section at $t=0.1$ 
in Figure \ref{fig2}-(B).
In case that $\lambda_{1,1}$ is close to 0, 
when $y$ and $z$ are fixed and $t$ is varied, the value of $ X^{Q_1}_t(y,z) $ hardly changes.
When $y=z=0.5$, 
$X^{Q_1}_{0.1}(0.5,0.5) $, $ X^{Q_1}_{0.5}(0.5,0.5) $ and $ X^{Q_1}_{0.9}(0.5,0.5) $ are all close to 2.0.

\begin{figure}[H]
\begin{minipage}{0.32\hsize}
\begin{center}
\includegraphics[width=4.5cm]{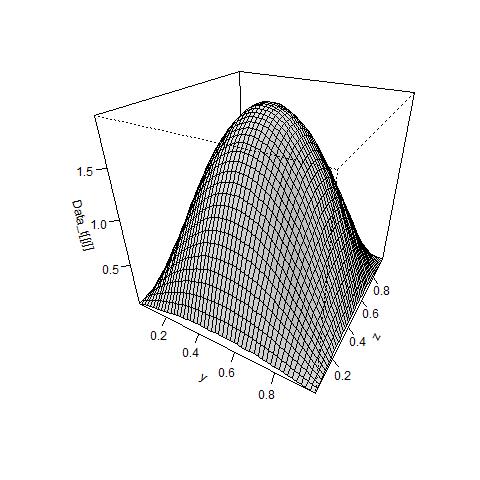}
\captionsetup{labelformat=empty,labelsep=none}
\subcaption{$t = 0.1$}
\end{center}
\end{minipage}
\begin{minipage}{0.32\hsize}
\begin{center}
\includegraphics[width=4.5cm]{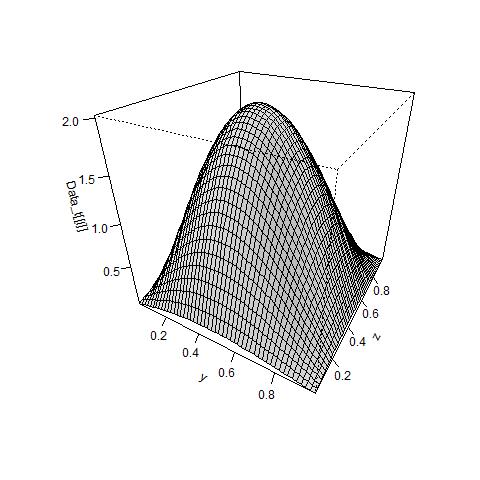}
\captionsetup{labelformat=empty,labelsep=none}
\subcaption{$t = 0.5$}
\end{center}
\end{minipage}
\begin{minipage}{0.32\hsize}
\begin{center}
\includegraphics[width=4.5cm]{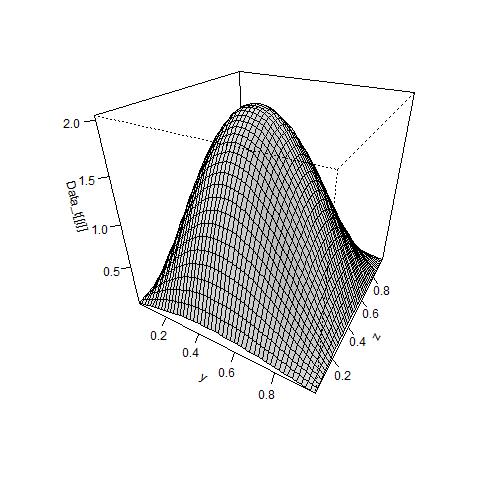}
\captionsetup{labelformat=empty,labelsep=none}
\subcaption{$t = 0.9$}
\end{center}
\end{minipage}
\caption{Cross sections of sample path with $\theta=(2,0.1,0.1,0.1)$, $\epsilon = 0.01$ and $\lambda_{1,1} \approx 0$}
\label{fig1}
\end{figure}

\begin{figure}[H]
\begin{minipage}{0.49\hsize}
\begin{center}
\includegraphics[width=4.5cm]{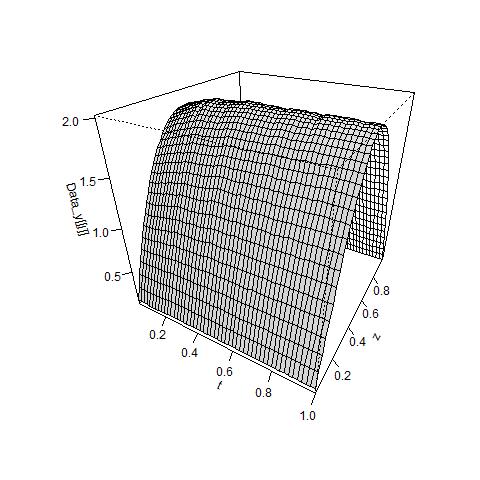}
\captionsetup{labelformat=empty,labelsep=none}
\subcaption{$y = 0.5$}
\end{center}
\end{minipage}
\begin{minipage}{0.49\hsize}
\begin{center}
\includegraphics[width=4.5cm]{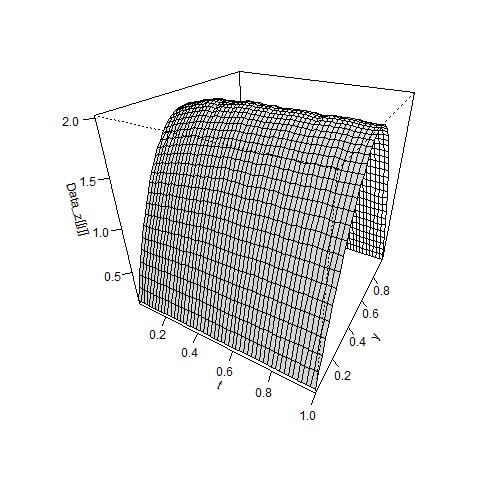}
\captionsetup{labelformat=empty,labelsep=none}
\subcaption{$z = 0.5$}
\end{center}
\end{minipage}
\caption{Cross sections of sample path with $\theta=(2,0.1,0.1,0.1)$, $\epsilon = 0.01$ and $\lambda_{1,1} \approx 0$}
\label{fig2}
\end{figure}

Figures \ref{fig3}-\ref{fig4} are the sample paths
with $\theta =(4,0.3,0.3,0.3) $ and $\lambda_{1,1} \approx 2.07$.
The cross section at $y=0.5$ in Figure \ref{fig3}-(A) corresponds to the cross section at $t=0.1$ 
in Figure \ref{fig4}-(A), 
and the cross section at $z=0.5$ in Figure \ref{fig3}-(A) corresponds to the cross section at $t=0.1$ 
in Figure \ref{fig4}-(B).
For the case that $\lambda_{1,1}$ is positive, when $y$ and $z$ are fixed 
and $t$ tends to $1$, the value of $X^{Q_1}_t(y,z)$ approaches $0$.
For example, when $y=z=0.5$, 
$ X^{Q_1}_{0.1}(0.5,0.5) \approx 1.68 $, $ X^{Q_1}_{0.5}(0.5,0.5) \approx 0.74 $, $ X^{Q_1}_{0.9}(0.5,0.5) \approx 0.33 $.

\begin{figure}[H]
\begin{minipage}{0.32\hsize}
\begin{center}
\includegraphics[width=4.5cm]{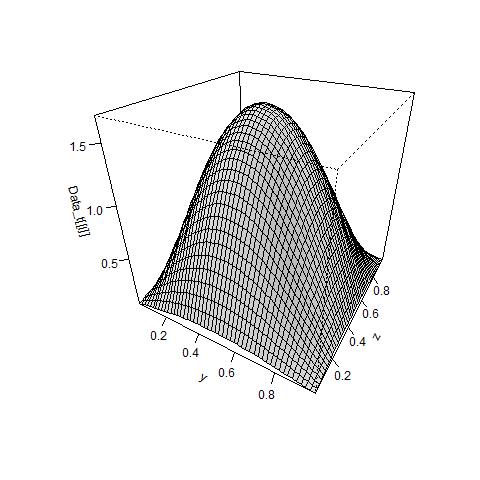}
\captionsetup{labelformat=empty,labelsep=none}
\subcaption{$t = 0.1$}
\end{center}
\end{minipage}
\begin{minipage}{0.32\hsize}
\begin{center}
\includegraphics[width=4.5cm]{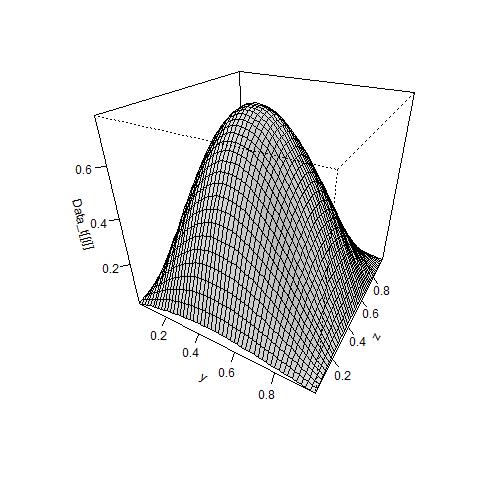}
\captionsetup{labelformat=empty,labelsep=none}
\subcaption{$t = 0.5$}
\end{center}
\end{minipage}
\begin{minipage}{0.32\hsize}
\begin{center}
\includegraphics[width=4.5cm]{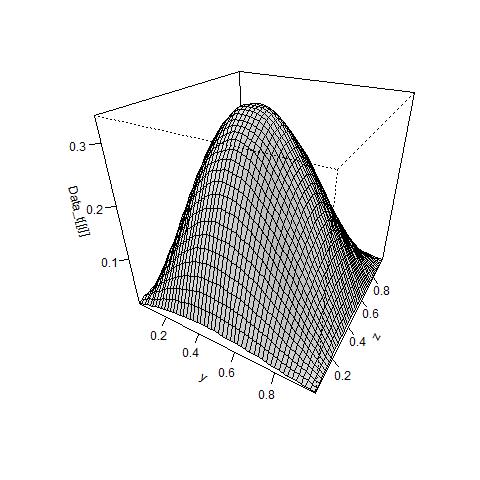}
\captionsetup{labelformat=empty,labelsep=none}
\subcaption{$t = 0.9$}
\end{center}
\end{minipage}
\caption{Cross sections of sample path with $\theta=(4,0.3,0.3,0.3)$, $\epsilon = 0.01$ and $\lambda_{1,1} \approx 2.07$}
\label{fig3}
\end{figure}

\begin{figure}[H]
\begin{minipage}{0.49\hsize}
\begin{center}
\includegraphics[width=4.5cm]{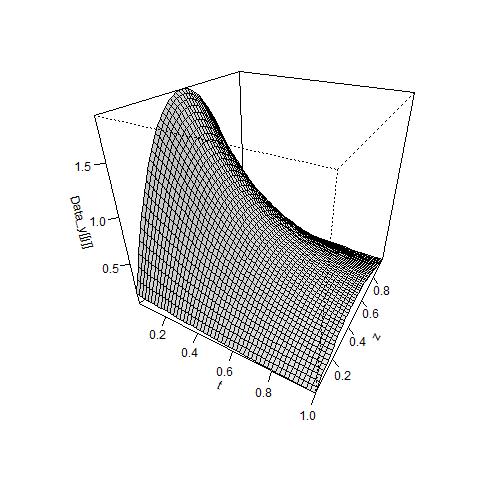}
\captionsetup{labelformat=empty,labelsep=none}
\subcaption{$y = 0.5$}
\end{center}
\end{minipage}
\begin{minipage}{0.49\hsize}
\begin{center}
\includegraphics[width=4.5cm]{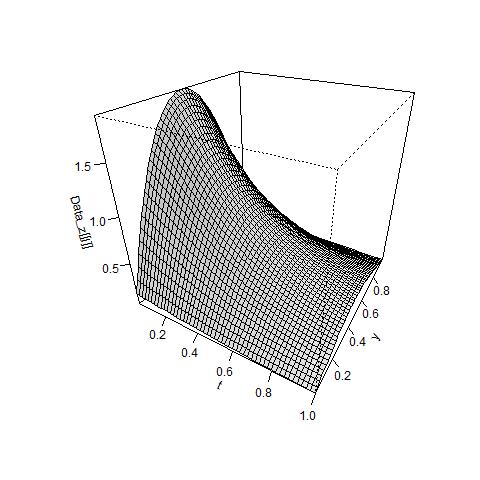}
\captionsetup{labelformat=empty,labelsep=none}
\subcaption{$z = 0.5$}
\end{center}
\end{minipage}
\caption{Cross sections of sample path with $\theta=(4,0.3,0.3,0.3)$, $\epsilon = 0.01$ and $\lambda_{1,1} \approx 2.07$}
\label{fig4}
\end{figure}

\subsection*{Characteristic of $\epsilon$}
Figures \ref{fig5}-\ref{fig7} show the sample paths
with $\theta = (4,0.3,0.3,0.3)$ and the initial condition $\xi(y,z)=30y(1-y)z(1-z)$.
$\theta_0$, $\theta_1$, $\eta_1$ and $\theta_2$ are fixed and only $\epsilon$ is changed.
\begin{en-text}
(A) in Figure \ref{fig5}, Figure \ref{fig6} and Figure \ref{fig7} are sample paths with $\epsilon=0.01$.
(B) in Figure \ref{fig5}, Figure \ref{fig6} and Figure \ref{fig7} are sample paths with $\epsilon=0.25$.
(C) in Figure \ref{fig5}, Figure \ref{fig6} and Figure \ref{fig7} are sample paths with $\epsilon=0.5$.
From Figures \ref{fig5}-\ref{fig7}, it can be seen that the noise increases as $\epsilon$ increases.
\end{en-text}
(A) in Figures \ref{fig5}-\ref{fig7} are cross sections with $\epsilon=0.01$.
(B) in Figures \ref{fig5}-\ref{fig7} are cross sections with $\epsilon=0.25$.
(C) in Figures \ref{fig5}-\ref{fig7} are cross sections with $\epsilon=0.5$.
From Figures \ref{fig5}-\ref{fig7}, it can be seen that the noise increases as $\epsilon$ increases.

\begin{figure}[H]
\begin{minipage}{0.32\hsize}
\begin{center}
\includegraphics[width=4.5cm]{2.jpeg}
\captionsetup{labelformat=empty,labelsep=none}
\subcaption{$\epsilon=0.01$}
\end{center}
\end{minipage}
\begin{minipage}{0.32\hsize}
\begin{center}
\includegraphics[width=4.5cm]{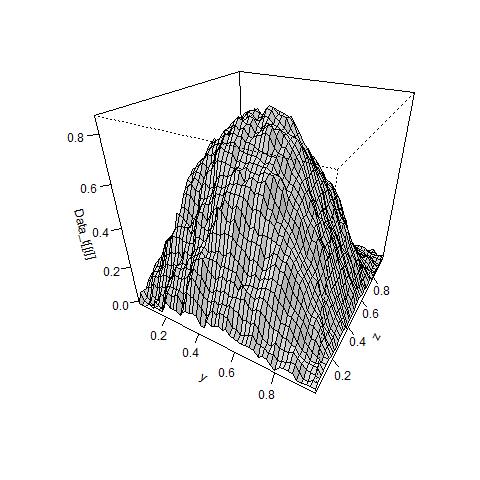}
\captionsetup{labelformat=empty,labelsep=none}
\subcaption{$\epsilon=0.25$}
\end{center}
\end{minipage}
\begin{minipage}{0.32\hsize}
\begin{center}
\includegraphics[width=4.5cm]{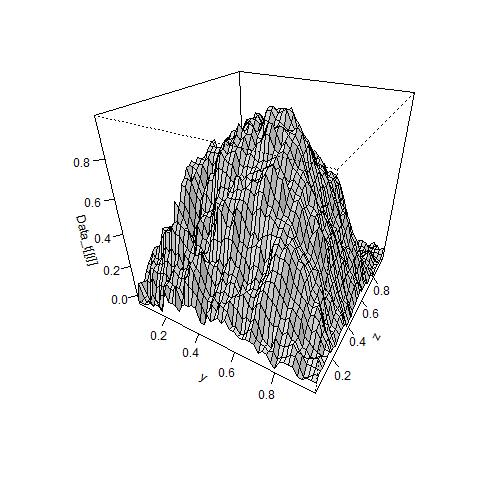}
\captionsetup{labelformat=empty,labelsep=none}
\subcaption{$\epsilon=0.5$}
\end{center}
\end{minipage}
\caption{Cross sections of sample path at $t=0.5$ with $\theta=(4,0.3,0.3,0.3)$ and $\lambda_{1,1} \approx 2.07$}
\label{fig5}
\end{figure}

\begin{figure}[H]
\begin{minipage}{0.32\hsize}
\begin{center}
\includegraphics[width=4.5cm]{4.jpeg}
\captionsetup{labelformat=empty,labelsep=none}
\subcaption{$\epsilon=0.01$}
\end{center}
\end{minipage}
\begin{minipage}{0.32\hsize}
\begin{center}
\includegraphics[width=4.5cm]{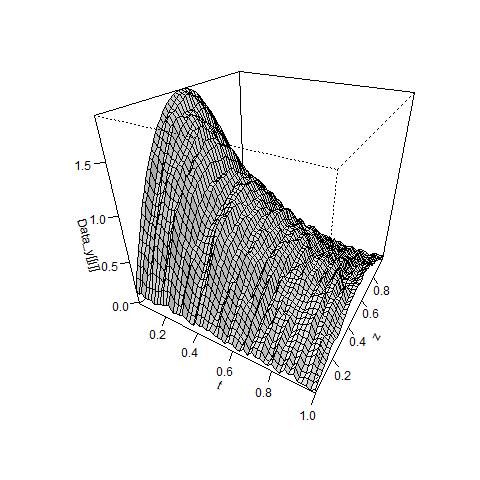}
\captionsetup{labelformat=empty,labelsep=none}
\subcaption{$\epsilon=0.25$}
\end{center}
\end{minipage}
\begin{minipage}{0.32\hsize}
\begin{center}
\includegraphics[width=4.5cm]{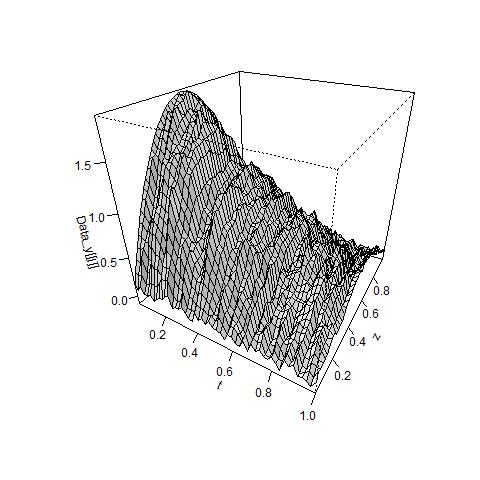}
\captionsetup{labelformat=empty,labelsep=none}
\subcaption{$\epsilon=0.5$}
\end{center}
\end{minipage}
\caption{Cross sections of sample path at $y=0.5$ with $\theta=(4,0.3,0.3,0.3)$ and $\lambda_{1,1} \approx 2.07$}
\label{fig6}
\end{figure}

\begin{figure}[H]
\begin{minipage}{0.32\hsize}
\begin{center}
\includegraphics[width=4.5cm]{5.jpeg}
\captionsetup{labelformat=empty,labelsep=none}
\subcaption{$\epsilon=0.01$}
\end{center}
\end{minipage}
\begin{minipage}{0.32\hsize}
\begin{center}
\includegraphics[width=4.5cm]{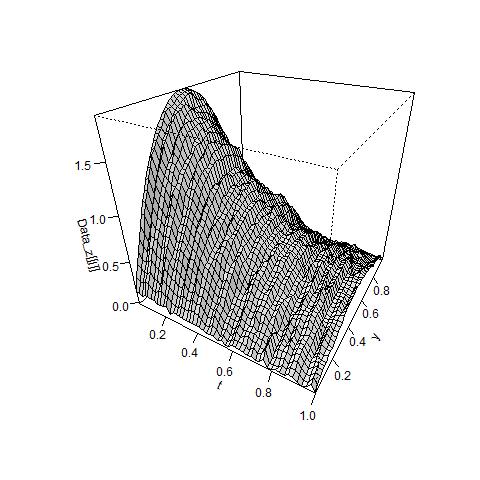}
\captionsetup{labelformat=empty,labelsep=none}
\subcaption{$\epsilon=0.25$}
\end{center}
\end{minipage}
\begin{minipage}{0.32\hsize}
\begin{center}
\includegraphics[width=4.5cm]{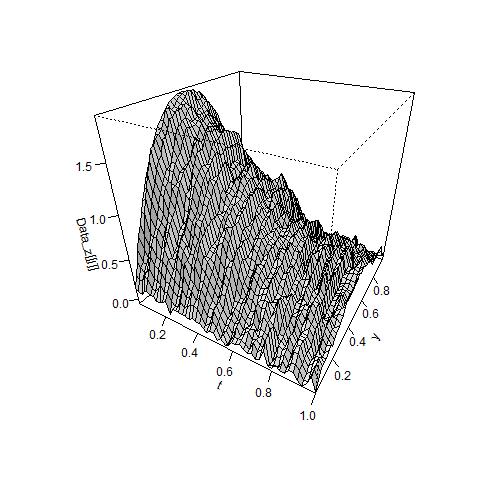}
\captionsetup{labelformat=empty,labelsep=none}
\subcaption{$\epsilon=0.5$}
\end{center}
\end{minipage}
\caption{Cross sections of sample path at $z=0.5$ with $\theta=(4,0.3,0.3,0.3)$ and  $\lambda_{1,1} \approx 2.07$}
\label{fig7}
\end{figure}

\end{document}